\def\mltext{} 
\newtheorem{theorem}{Theorem}
\newtheorem{lemma}[theorem]{Lemma}
\newtheorem{proposition}[theorem]{Proposition}
\newtheorem{corollary}[theorem]{Corollary}
\def\R{\mathbb R}
\def\Z{\mathbb Z}
\def\p{\partial}
\def\hat{\widehat}
\def\tilde{\widetilde}
\def\det{\text{det}}
\def \beq {\begin {eqnarray}}
\def \eeq {\end {eqnarray}}
\def \ba {\begin {eqnarray*}}
\def \ea {\end  {eqnarray*}}
\newcommand{\Ein}{\hbox{Ein}}
\newcommand{\Ric}{\hbox{Ric}}
\title[Determination  of the spacetime]{Determination  of the spacetime from local time measurements%separations
}
\author{Matti Lassas}
\address{Matti Lassas, University of Helsinki, P.O. Box 68 FI-00014}
\email{Matti.Lassas@helsinki.fi}
\author{Lauri Oksanen}
\address{Department of Mathematics, University College London, Gower Street, London UK, WC1E 6BT.}
\email{l.oksanen@ucl.ac.uk}
\author{Yang Yang}
\address{Department of Mathematics, Purdue University, West Lafayette, IN 47907, USA}
\email{yang926@purdue.edu}
\date{\today}
\begin{document}

\begin{abstract}
We consider an inverse problem for a Lorentzian spacetime $(M,g)$,
and show that time measurements, that is, the knowledge of 
the Lorentzian time separation function on a submanifold $\Sigma\subset M$ determine
the $C^\infty$-jet of the metric in the Fermi coordinates associated to $\Sigma$.
We use this result to study the global determination of the spacetime $(M,g)$
% and a Lorentzian metric $g$  on it 
when it has a real-analytic structure
or is stationary and satisfies the Einstein-scalar field equations. In addition to this,
we require that $(M,g)$ is geodesically complete modulo scalar curvature singularities.
The results are Lorentzian counterparts of extensively studied inverse problems in Riemannian
geometry - the determination of the jet of the metric and the boundary rigidity problem. 
%In addition to the local and global reconstruction results 
We give also counterexamples
in cases when the assumptions are not valid, and discuss inverse problems in general relativity. 
\end{abstract}

\maketitle

\section{introduction}

{Inverse problems for hyperbolic equations have been studied extensively using a 
geometric point of view, see e.g.\ \cite{AKKLT,BeK,Eskin, E2011,KKL,KrKL,LO}. This is due to the fact that 
for a hyperbolic equation with time-independent coefficients, the travel time of the waves
between two points defines a natural Riemannian distance between these
points. The corresponding Riemannian metric is called the travel time metric.
A classical inverse problem is to determine the wave speed
inside the object given the travel times between
the boundary points, or equivalently, the distances between the boundary points.														
In this paper we study geometric inverse problems for Lorentzian manifolds, that are  related to 
hyperbolic equations with time-depending coefficients and to general relativity.
%For such inverse problems it is natural to consider a manifold with a Lorentzian metric. 

Before formulating the geometric inverse problems for Lorenzian manifolds
that we will study, let us recall earier results
for Riemannian manifolds.} A paradigm problem is the boundary rigidity problem: 
does the restriction $\hat d|_{\partial \hat M\times\partial \hat M}$ of the Riemannian distance function $\hat d : \hat M\times \hat M\rightarrow\mathbb{R}$
 determine uniquely a Riemannian manifold with boundary $(\hat M,\hat  g)$.
If this is possible, then $(\hat M,\hat g)$ is said to be boundary rigid.
Since the boundary distance function takes into account only the shortest paths, it is easy to construct counterexamples where $\hat d|_{\partial \hat M\times\partial \hat M}$ does not carry information on an open subset of $M$. Thus some a-priori conditions on $(\hat M,\hat g)$ are necessary for boundary rigidity. 

Michel has conjectured that simple manifolds are boundary rigid \cite{M}.
We recall that a compact Riemannian manifold with boundary $(\hat M,\hat g)$ is simple, if $\partial \hat M$ is strictly convex 
and if for any $x\in\hat  M$ the exponential map 
$\exp_x$
%$\exp_{x}:\exp^{-1}_{x}(M)\rightarrow M$ 
is a diffeomorphism. 
Pestov and Uhlmann proved the conjecture in the dimension two \cite{PU}  but it is open in higher dimensions.

A related problem to determine the $C^{\infty}$-jet of the metric tensor $\hat g$ on the boundary from
the Riemannian distance function $\hat d|_{\partial \hat M\times\partial \hat M}$
was solved for simple Riemannian manifolds in \cite{LSU}. 
Here we extend this result for Lorentzian manifolds. Our main motivation comes from the theory of relativity, whence we consider  
{a Lorentzian manifold  without boundary, and replace 
the restriction of the Riemannian distance function} %(above, in the Riemannian case, $d|_{\partial \hat M\times\partial \hat M}$) }
with the time separations between points on a timelike hypersurface.

Let us suppose that $(M,g)$ is a Lorentzian manifold without boundary. The two main theorems of the paper concern determination of $(M,g)$ given time separations between points on a timelike hypersurface $\Sigma$. Our first result is of local nature: we show that the time separations determine the $C^\infty$-jet of the metric tensor $g$ at a point $x_0\in\Sigma$ assuming that there are many timelike geodesics starting near $x_0$ and intersecting $\Sigma$ again later. The result is obtained by adapting the method developed in \cite{SU} and \cite{LSU} to the Lorentzian context. The method has been previously used only in the Riemannian setting. Our global result is that $C^\infty$-jet of the metric tensor at a point determines the universal Loretzian covering space of $(M,g)$ assuming that $(M,g)$ is real-analytic and geodesically complete modulo scalar curvature singularities, see the definition before Theorem 2.

%As an example let us consider the Kerr black hole. 
%\HOX{TODO: write the example}

\subsection{Previous literature}

The boundary distance rigidity question in the Lorentzian context has been studied by Anderson, 
  Dahl,  and Howard  \cite{And-D-H} 
  who have studied slab-like manifolds and show, in particular, that flat  two dimensional product manifolds
  are boundary rigid, that is, the Lorentzian distances of boundary points determine the manifold uniquely
 under natural assumptions. %Another related inverse problem in the Lorentzian setting is considered in \cite{E2011}, where it is shown that partial boundary measurement of the Dirichlet-to-Neumann map determines the ergosphere up to a certain change of variables.
In the Riemannian case, in addition to the above mentioned paper by Pestov and Uhlmann \cite{PU}, boundary rigidity has been proved 
for subdomains of Euclidean space \cite{Gr}, for subspaces of an open hemisphere in two dimension \cite{M}, for subspaces of symmetric spaces of constant negative curvature \cite{BCG}, for two dimensional spaces of negative curvature \cite{C1,O}. It was shown in \cite{SU3} that metrics a priori close to a metric in a generic set, which includes real-analytic metrics, are boundary rigid, and in \cite{LSU} it was shown that two metrics with identical boundary distance functions differ by an isometry which fixes the boundary if one of the metrics is close to the Euclidean metric. For other results see \cite{BI,CDS,PSU,SU1}.

{In  \cite{PU}, the Riemannian boundary rigidity problem is reduced to an inverse problem for the Laplace-Beltrami equation
on a two-dimensional manifold. The solution of this problem  is heavily based on
the use of the underlying real-analytic conformal structure that the Riemannian surfaces
have \cite{LU,LTU,LeU}. In the present paper we will use similar kind of   underlying real-analytic structure 
to study inverse problems for the 
stationary spacetimes.}

In addition to \cite{LSU}, the deteremination of the $C^\infty$-jet of the Riemannian metric tensor has been studied 
in \cite{SU}, where the problem of this type was considered for a class of non-simple manifolds, and the authors showed that knowledge of the lens data in a neighborhood of a boundary point determines $C^{\infty}$-jet of the metric at this point. The boundary distances $\hat d|_{\p \hat M \times \p \hat M}$
determine the lens data in the case of a simple Riemannian manifold.

\section{Statement of the results}

%First we recall some terminologies and facts from Lorentzian geometry.
Let $(M,g)$ be a $(1+n)$-dimensional smooth manifold $M$ with a Lorentzian metric $g$ of signature $(-,+,\dots,+)$. We recall that a simply convex neighborhood is an open subset $\mathcal U$ in $M$ which is a normal neighborhood for every point inside. It is well-known that in a Lorentzian manifold, each point has a simply convex neighborhood, see e.g. \cite{SW}. 
%{We note that all points $x,y\in \mathcal U $ can 
%be connected by a unique geodesic that is a subset of $\mathcal U$.}
%\HOX{Lets use ``simply convex'' as in Sacks-Wu since there are many notions of convexity. -Lauri}

We will begin by formulating a local result on a simply convex neighborhood $\mathcal U \subset M$.
We regard $(\mathcal{U},g)$ as Lorentzian manifold and define the causality relation as usual, that is, for $x$ and $y$ in $\mathcal{U}$, we write $x\ll y$ if there is a future-pointing timelike curve $\mu([0,l])$, $l>0$ in $\mathcal{U}$ from $x$ to $y$.
{We emhasize   that  a simply convex neighborhood $(\mathcal{U},g)$ is time-oriented and hence $x \ll y$ does not hold when $x=y$.}
Let us define two open subsets of $\mathcal{U}$
$$I^{+}_{\mathcal{U}}(x):=\{z\in \mathcal{U}:x\ll z\}, \quad\quad\quad I^{-}_{\mathcal{U}}(y):=\{z\in \mathcal{U}:z\ll y\}.$$
We call $I^{+}_{\mathcal{U}}(x)$ the chronological future of $x$ in $\mathcal{U}$ and $I^{-}_{\mathcal{U}}(y)$ the chronological past of $y$ in $\mathcal{U}$.

On the Lorentzian manifold $(\mathcal{U},g)$, we can define the Lorentzian distance function, also called the time separation function, 
$$d:\mathcal{U}\times\mathcal{U}\rightarrow\mathbb{R}$$ as follows. For any $x,y \in \mathcal{U}$, by simply convexity of $\mathcal{U}$, there exists a unique geodesic $\gamma_{x,y}:[0,1]\rightarrow \mathcal{U}$ connecting them, that is, 
$\gamma_{x,y}(0)=x$ and $\gamma_{x,y}(1)=y$.
We define
$$L(\gamma_{x,y}):=\displaystyle\int^{1}_{0}|\dot{\gamma}_{x,y}(t)|_{g}\,dt,$$
where $|v|_{g}=|(v,v)_{g}|^{\frac{1}{2}}$ and $(v,w)_{g}=g_{jk}(x)v^jw^k$ is the scalar product 
of vectors $v,w\in T_xM$
with respect to the metric tensor $g$. The Lorentzian distance function is defined as
\begin{align}
\label{def_timesep}
d(x,y)=\left\{
\begin{array}{cl}
L(\gamma_{x,y}), & \textrm{ $\gamma_{x,y}$ is timelike and future-pointing, } \\
0, & \textrm{ otherwise.}\\
\end{array}\right.
\end{align}
%This is the usual Lorentzian distance function on the Lorentzian manifold $(\mathcal{U},g)$, see \cite[Chapter 14]{O}.
Note that this function encodes the causality information and thus is not symmetric.

\begin{figure}[b]
\begin{center}
\includegraphics[height=0.3\textheight]{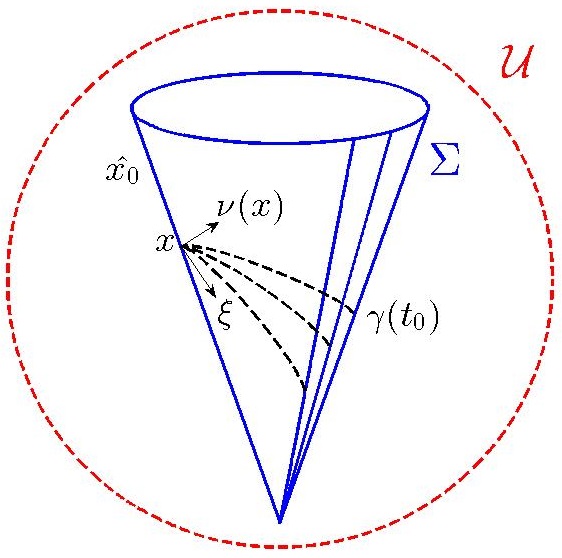}
\end{center}
\caption{An example of $\Sigma$ in $\mathbb{R}^{2+1}$. Here $\Sigma$ is a cone minus the tip.} 
\end{figure}

%\HOX{Let us call the point and the direction where $\Sigma$ is timelike convex $(\hat x_0,\hat\xi_0)$ to be consistent with the statements of the theorems below. This notation does not seem perfect to me since we have preserved the ``hat'' notation for Riemannian settings. -Yang}
We will assume that $d$ is known on an oriented smooth timelike open submanifold $\Sigma \subset \mathcal U$
of codimension $1$.
Moreover, we assume that the topological closure of $\Sigma$ is compact and satisfies $\overline{\Sigma}\subset\mathcal{U}$.
Suppose $\hat x_0\in\Sigma$ and $T_{\hat x_0}\Sigma$ is a timelike subspace of $T_{\hat x_0} M$, let $\nu$ be a unit normal vector field of $\Sigma$ near $\hat x_0$. We say that $\Sigma$ is {\em timelike convex} near the point $\hat x_0 \in \Sigma$ and a timelike vector
$\hat\xi_0 \in T_{\hat x_0} \Sigma$ in the direction $\nu(\hat x_0)$, if the following hypothesis \textbf{H} holds.
%\HOX{Matti's second pictures is added here. -Yang}
%\HOX{We could just have $\hat \xi_0$ time-like, and say later that we give the proof in the case $\hat \xi_0$ is past-pointing, the future pointing case being analogous. Moreover, we could denote $x = x_0$ and $x_0 = \hat x_0$ etc. - Lauri}
%

%\HOX{Removed hats in H but kept them in Th 1 since otherwise this would need to be changed in a lot of places -Lauri}
\begin{description}
\item[\textbf{H}]{\em
There is an open neighborhood $U$ of $(\hat x_0, \hat \xi_0)$ in $T\Sigma$ satisfying the following:
for any $(x,\xi)\in U$, there is $\epsilon>0$ such that if $r\in (0,\epsilon)$, then the geodesic $\gamma(t)$ with
      $$\gamma(0)=x, \quad \dot{\gamma}(0)=\xi+r\nu(x),$$
satisfies $\gamma(t_0)\in\overline{\Sigma}$ for some $t_0>0$, and $\gamma(t) \in\mathcal{U}$ for $t\in (0,t_{0})$.
%Here $T\Sigma$ is the tangent bundle of $\Sigma$, 
}
\end{description}

%\HOX{This example is now found in Section 3.5, not ``Appendix A'' any more. -Yang}
A physically motivated example of $\Sigma$ can be found in Section 3.5, 
%\HOX{We could add here a figure in $\R^{1+2}$ on the cone constructed
%in App. A and add in the figure few time-like geodesics connecting points in $\Sigma$.\\
%Picture added. -Yang}
which consists of {union of the world lines} of freely falling material particles issued from a fixed point with identical Newtonian speeds. Our local result asserts that the knowledge of the Lorentzian distance function $d$ on a timelike hypersurface $\Sigma$ uniquely determines the $C^{\infty}$-jet of the Lorentzian metric $g$ at a point $\hat x_0 \in \Sigma$ assuming that $\Sigma$ is timelike convex near $(\hat x_0, \hat \xi_0)$ for some timelike vector $\hat\xi_0\in T_{\hat x_0}\Sigma$.

%To state this unique determination result precisely, let $(\widetilde{M},\widetilde{g})$ be another time-oriented $(1+n)$-dimensional smooth Lorentzian manifold and $\widetilde{\mathcal{U}}$ a simply convex neighborhood in $\widetilde{M}$, let $\widetilde{\Sigma}$ be another timelike hypersurface with compact closure and satisfying \textbf{H} in $\widetilde{\mathcal{U}}$. If there is a diffeomorphism $\Phi:\Sigma\rightarrow\widetilde{\Sigma}$ which identifies $\Sigma$ and $\widetilde{\Sigma}$, then we have the following local result.

\begin{theorem}
\label{thm_local}
Let $(M, g)$ and $(\widetilde{M},\widetilde{g})$ be two smooth Lorentzian manifolds, let $\Sigma \subset M$ and $\widetilde \Sigma \subset \widetilde{M}$
be smooth timelike submanifolds of codimension $1$ such that their closures are compact in simply convex neighborhoods $\mathcal U$ and $\widetilde{\mathcal U}$ respectively.
Let $\hat x_0 \in \Sigma$ and let $\hat \xi_0 \in T_{\hat x_0} \Sigma$ be timelike.
Suppose that there is a diffeomorphism $\Phi : \Sigma \to \widetilde \Sigma$,
such that $\Phi_{\ast\hat x_0}(\hat \xi_0)$ is timelike, and such that 
$\Sigma$ and $\widetilde \Sigma$ are timelike convex near $(\hat x_0, \hat \xi_0)$
in the direction of $\nu(\hat{x_0})$ and near $(\Phi(\hat x_0), \Phi_{\ast\hat x_0}(\hat \xi_0))$ in the direction of $\tilde{\nu}(\Phi(\hat{x_0}))$ respectively.
Suppose, furthermore, that the corresponding Lorentzian distance functions satisfy
$$d(x,y)=\tilde{d}(\Phi(x),\Phi(y)), \quad\quad\quad \hbox{for all }x,y\in\Sigma.$$
Then the $C^{\infty}$-jet of $g$ at $\hat x_0$ coincides with the $C^{\infty}$-jet of $\tilde{g}$ at $\Phi(\hat x_0)$.
\end{theorem}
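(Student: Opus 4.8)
The plan is to pass to Fermi coordinates adapted to $\Sigma$. Transporting a fixed chart $(z^1,\dots,z^n)$ of $\Sigma$ near $\hat x_0$ to $\tilde\Sigma$ by $\Phi$, and adjoining the normal coordinate $s$ built from the spacelike unit normal $\nu$ (resp.\ $\tilde s$ from $\tilde\nu$), reduces matters to the case $\Sigma=\tilde\Sigma=\{s=0\}$ with $\Phi=\mathrm{id}$ and both metrics in the form $g=ds^2+h_{\alpha\beta}(z,s)\,dz^\alpha\,dz^\beta$ and $\tilde g=ds^2+\tilde h_{\alpha\beta}(z,s)\,dz^\alpha\,dz^\beta$, where $h(\cdot,s)$ and $\tilde h(\cdot,s)$ are Lorentzian on each leaf. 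In this gauge the assertion is equivalent to $\partial_s^k h_{\alpha\beta}(\hat z_0,0)=\partial_s^k \tilde h_{\alpha\beta}(\hat z_0,0)$ for all $k$, where $\hat z_0$ denotes the coordinates of $\hat x_0$ and the tangential derivatives are recovered en route.

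Since $\mathcal U$ is simply convex, on the open set $\{x\ll y\}$ the function $d$ is smooth and equals the length of the unique connecting geodesic $\gamma_{x,y}$. The first variation formula identifies $\nabla_x d$ and $\nabla_y d$, up to sign, with the unit future-pointing tangents of $\gamma_{x,y}$ at its endpoints. Their restrictions to $T\Sigma$ — all that the data $d|_{\Sigma\times\Sigma}$ provides — give the tangential parts of these velocities, while the normal parts are fixed, up to a sign resolved by the return geometry, through $g(\dot\gamma,\dot\gamma)=-1$ and $g(\nu,\nu)=1$. Hence $d|_{\Sigma\times\Sigma}$ determines, for every returning timelike geodesic supplied by hypothesis \textbf{H}, its endpoints, its endpoint velocities, and its proper time $\tau=d(x,y)$; this is the Lorentzian counterpart of the lens relation. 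Letting the separation tend to zero recovers the induced metric $h(\cdot,0)$ at points near $\hat x_0$, so that $h(\cdot,0)$ and its tangential derivatives coincide with those of $\tilde h(\cdot,0)$.

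It remains to recover the normal derivatives. In the Fermi gauge the normal component of the geodesic equation is $\ddot s=\tfrac12\partial_s h_{\alpha\beta}(z,s)\,\dot z^\alpha\dot z^\beta$. A geodesic issued from $(z,0)$ with tangential velocity $\xi$ near $\hat\xi_0$ and small normal velocity $r>0$ is turned back to $\Sigma$ by hypothesis \textbf{H}, the turning being governed at leading order by $\partial_s h_{\alpha\beta}(z,0)\,\xi^\alpha\xi^\beta$; thus the $r\to 0$ asymptotics of the now-known lens data pin down this contraction, and letting $\xi$ range over the open set $U$ recovers the full symmetric tensor $\partial_s h_{\alpha\beta}(\hat z_0,0)$, i.e.\ the second fundamental form. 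One then argues inductively as in \cite{SU,LSU}: expanding the lens data in powers of $r$, the unknown top-order derivative $\partial_s^{k+1}h_{\alpha\beta}(\hat z_0,0)$ enters the next coefficient linearly and again contracted against the tangential velocity, with all lower-order normal derivatives already known and subtracted; injectivity of $T\mapsto T_{\alpha\beta}\xi^\alpha\xi^\beta$ on an open set of $\xi$ then isolates it. Performing this on both manifolds and invoking $d(x,y)=\tilde d(\Phi x,\Phi y)$ yields $\partial_s^k h=\partial_s^k \tilde h$ at $\hat z_0$ for all $k$, completing the proof.

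I expect the inductive extraction to be the main obstacle. One must establish the exact form of the $r\to 0$ expansion of the endpoint, exit velocity, and proper time, with uniform control over the family of returning geodesics from hypothesis \textbf{H}, and verify that at each order the highest unknown normal derivative occurs in the claimed nondegenerate way — the substance of the Stefanov--Uhlmann method, here transplanted to the Lorentzian setting. The causal features must be handled throughout: the normal $\nu$ is spacelike, it is timelike convexity that makes the geodesics turn and return within $\mathcal U$, and $d$ is neither symmetric nor smooth across the light cone, so one must stay in the region where the relevant geodesics exist, remain in $\mathcal U$, and depend smoothly on $(x,\xi,r)$.
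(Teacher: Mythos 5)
Your setup agrees with the paper's (Fermi coordinates $g = \sum g_{ij}\,dx^i dx^j + dr^2$, determination of the induced metric on $T\Sigma$ from short timelike separations, and the eikonal identity to complete known tangential gradients of $d$ to full gradients), but the core step --- the inductive extraction of $\partial_s^k h_{\alpha\beta}(\hat z_0,0)$ from the $r\to 0$ asymptotics of the lens data --- has a genuine gap, and it is exactly the step you flag as ``the main obstacle.'' Your picture, in which the geodesic with initial velocity $\xi + r\nu$ turns back \emph{locally}, the turning governed at leading order by $\partial_s h_{\alpha\beta}(z,0)\xi^\alpha\xi^\beta$, presupposes that the return point tends to the starting point as $r \to 0$. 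Hypothesis \textbf{H} does not give this: it only guarantees that the geodesic meets $\overline\Sigma$ again at \emph{some} later time, possibly after a macroscopic excursion through $\mathcal U$. Indeed, if $\partial_s h_{\alpha\beta}(z,0)\xi^\alpha\xi^\beta > 0$ the geodesic initially accelerates \emph{away} from $\Sigma$, so local turning is impossible, yet \textbf{H} may still hold through global geometry; the paper's own Figure 1 (a cone in Minkowski space, where $\partial_s h \equiv 0$ and returning geodesics are straight chords hitting the far side of the cone) is precisely of this type. In that regime the $r\to 0$ asymptotics of return point, exit velocity and proper time are governed by the metric along a limiting geodesic of positive length, not by the jet of $h$ at the starting point, so your expansion cannot isolate the normal derivatives. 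The paper is organized around exactly this dichotomy: when the return points $y_l$ accumulate at some $y_0 \neq x_0$, it runs a constructive Stefanov--Uhlmann argument \cite{SU}, solving the eikonal equation on $\Sigma$ for $\partial_r d^{+}_{y_l}$, differentiating it in $r$, and letting $l \to \infty$ so that the one unknown term, $\partial_r^2 d^{+}_{y_l}(x_0)$, is killed by the factor $\partial_r d^{+}_{y_l}(x_0) = 1/(l|\xi_l|_g) \to 0$ (boundedness of $\partial_r^2 d^{+}_{y_l}(x_0)$ uses $y_0 \neq x_0$).

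In the complementary case $y_0 = x_0$ (short returning geodesics), where your expansion picture is at least geometrically plausible, the proposal still does not supply the needed argument: you assert, but do not prove, that at each order the highest normal derivative enters the lens expansion linearly, contracted with $\xi^\alpha\xi^\beta$, with all other contributions known and with uniform control over the family in \textbf{H}; the degenerate situation $\partial_s h_{\alpha\beta}\xi^\alpha\xi^\beta = 0$ already shows the leading asymptotics need not be governed by the first normal derivative. The paper never proves such an expansion. Following \cite{LSU} it argues non-constructively: if the first undetermined normal derivative of $f = g - g'$ satisfied $\sum \partial_r^k f_{ij}(x_0,0)\,\xi_0^i\xi_0^j > 0$, then Taylor expansion gives $\sum f_{ij}\,\xi^i\xi^j > 0$ on a conic neighborhood with $r>0$; running a returning geodesic $\gamma_l$ inside this cone, reparametrizing by $g'$-proper time, and combining the Cauchy--Schwarz inequality with the maximizing property of the radial $g'$-geodesic contradicts $d = d'$ on $\Sigma \times \Sigma$. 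To repair your proof you would need either to carry out the asymptotic program rigorously in the short-geodesic case \emph{and} add a separate mechanism for the long-geodesic case, or to replace the induction by a comparison argument of this type.
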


Here $\Phi_{\ast\hat x_0}(\hat\xi_0)\in T_{\Phi(\hat x_0)}\widetilde{\Sigma}$ is the image of $\hat\xi_0$ under the push-forward $\Phi_{\ast}$ at $\hat{x_0}$. 
A way to formulate the equality of the $C^\infty$-jets is to say that all the derivatives of the metric tensors are equal in suitable coordinates.
%\HOX{The term Fermi coordinates is used in abstract. -Lauri}
In the proof we use semigeodesic coordinates (also called Fermi coordinates) associated to $\Sigma$ and $\widetilde \Sigma$
when they are identified by using the diffeomorphism $\Phi$, see the paragraphs before the proof of Theorem \ref{thm_local} and \eqref{normal} for the details.

%\HOX{Commented out some forward references here -Lauri}
We say that $(M,g)$ is real-analytic if the manifold $M$ has an real-analytic structure with respect to which the metric tensor $g$ is real-analytic. 
%It is well-known that a stationary vacuum spacetime is real-analytic \cite{Muller}. We show that also a stationary solution to the Einstein field equation with {stationary} scalar fields is real-analytic, see 
%\HOX{Check the final ref}
%Sec.\ \ref{sec_stationary_analytic}. We give also other examples of physically motivated spacetimes satisfying the assumptions of our results.
%\HOX{Check the other examples}
%
{We note that when
 $(M,g)$ is real-analytic, the determination of the $C^\infty$-jet of the metric tensor
 does not imply global uniqueness results without additional assumptions even if we assumed a priori that M is simply connected.
%\HOX{removed maximality here. - Lauri}
The reason for this is that there can be multiple incompatible real-analytic extensions of a real-analytic manifold, as is seen in Example \ref{ex_incompatible} below.
However, we will show that for a  real-analytic manifold $(M,g)$
the  determination of the $C^\infty$-jet
 implies a global uniqueness result via analytic continuation under a topological completeness assumption that we will describe next.}

We recall that a function $\iota : M \to \R$ is a scalar curvature invariant if it is of the form
$$
\iota(x) = I(g(x), R(x), \nabla R(x), \dots, \nabla^k R(x)), \quad k=1,2,\dots,
$$
where $I$ is a smooth function, $g$ is the metric tensor, $R$ is the corresponding curvature tensor, and $\nabla$ stands for covariant differentiation.
For example, the Kretschmann scalar, written in local coordinates as $R_{abcd} R^{abcd}$, is a scalar valued curvature invariant of the form
$\iota(x) = I(g(x), R(x))$.
The Kretschmann scalar for the Schwarzschild black hole is a constant times $r^{-6}$ where $r$ is the radial coordinate.

We say that $(M,g)$ is geodesically complete modulo scalar curvature singularities if every maximal geodesic $\gamma : (\ell_-,\ell_+) \to M$ satisfies
$\ell_\pm = \pm\infty$ or there is a scalar curvature invariant $\iota$ such that $\iota(\gamma(t))$ is unbounded as $t \to \ell_\pm$. We will show the following global result.

\begin{theorem}
\label{thm_global}
Let $(M, g)$ and $(\widetilde{M},\widetilde{g})$ be two smooth Lorentzian manifolds satisfying the assumptions of Theorem \ref{thm_local}.
Suppose, furthermore, that $(M,g)$ and $(\tilde M,\tilde g)$ are connected, geodesically complete modulo scalar curvature singularities
and real-analytic.
{Then the universal Lorentzian covering spaces of 
$(M,g)$ and $(\tilde M,\tilde g)$ are isometric.}
\end{theorem}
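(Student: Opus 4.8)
The plan is to upgrade the pointwise agreement furnished by Theorem \ref{thm_local} to a global isometry by analytic continuation, using the universal cover to absorb any monodromy. First I would reinterpret the conclusion of Theorem \ref{thm_local}. The coincidence of the $C^\infty$-jets of $g$ and $\tilde g$ at $\hat x_0$ and $\tilde x_0 := \Phi(\hat x_0)$ in the common Fermi coordinates produces a linear map $L : T_{\hat x_0} M \to T_{\tilde x_0} \tilde M$ that is an isometry of tangent spaces, i.e.\ $\tilde g(Lu,Lv) = g(u,v)$. The matching of the timelike vectors $\hat\xi_0$ and $\Phi_{\ast\hat x_0}(\hat\xi_0)$ together with the normals $\nu$, $\tilde\nu$ fixes the time- and co-orientation, so $L$ identifies the causal structures. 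Differentiating the identity of jets then shows that $L$ intertwines all covariant derivatives of curvature, i.e.\ $L^\ast(\tilde\nabla^k \tilde R)(\tilde x_0) = (\nabla^k R)(\hat x_0)$ for every $k \geq 0$.

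Next I would produce a local isometry from this data. Because both metrics are real-analytic, the analytic form of Cartan's local isometry theorem applies: when all covariant derivatives of the curvature tensor correspond under a single linear isometry, the map $\phi := \tilde\exp_{\tilde x_0} \circ\, L \circ \exp_{\hat x_0}^{-1}$ is an isometry of a normal neighborhood of $\hat x_0$ onto one of $\tilde x_0$. In the analytic category this local isometry is rigid, being uniquely determined by its $1$-jet $(\hat x_0 \mapsto \tilde x_0, L)$ at the base point; any two local isometries sharing this $1$-jet coincide on the connected overlap of their domains. This rigidity is the mechanism by which germs glued along a path patch consistently.

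The core of the argument is the extension of $\phi$ by analytic continuation along curves. Given $q \in M$ and a path $\sigma$ from $\hat x_0$ to $q$, I would continue $\phi$ along $\sigma$ through a chain of overlapping normal neighborhoods, the rigidity above guaranteeing consistency at each overlap. The continuation proceeds as long as the geodesics used to build the normal charts remain defined; by the hypothesis that $(M,g)$ and $(\tilde M,\tilde g)$ are geodesically complete modulo scalar curvature singularities, each such geodesic either extends for all parameter values or runs into a point where some scalar curvature invariant $\iota$ becomes unbounded. Since any isometry preserves every scalar curvature invariant, a blow-up is reached along $\sigma$ exactly when the corresponding blow-up is reached along $\phi\circ\sigma$; hence $\phi$ is defined over precisely the same extent on both sides and never terminates prematurely on one manifold alone.

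Finally I would remove the path dependence and conclude. The germ obtained at $q$ can depend on the homotopy class of $\sigma$, but continuation along homotopic paths yields the same germ, again by analytic rigidity (the monodromy theorem for analytic continuation). Lifting everything to the universal Lorentzian covering spaces $\hat M \to M$ and $\hat{\tilde M} \to \tilde M$, where loops are contractible, the continuation becomes single-valued and assembles into a global local isometry $\hat\phi : \hat M \to \hat{\tilde M}$; running the construction from $L^{-1}$ produces an inverse, so $\hat\phi$ is a global isometry. The main obstacle I anticipate is controlling the continuation near curvature singularities and verifying that the domains of definition match on the two manifolds, which is exactly where geodesic completeness modulo scalar curvature singularities, combined with the invariance of scalar curvature invariants, is indispensable. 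A secondary technical point is the careful justification, in the Lorentzian rather than Riemannian setting, of both the analytic local isometry theorem and the monodromy invariance, since the exponential map is no longer a diffeomorphism and normal neighborhoods must be chosen with the causal structure in mind.
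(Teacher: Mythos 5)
Your strategy---a local isometry extracted from the jet agreement plus analyticity, extended by analytic continuation, with scalar curvature invariants guaranteeing that the continuation survives on both sides, and a passage to universal covers at the end---is the same skeleton as the paper's proof (the paper obtains the local isometry by matching Taylor series in geodesic normal coordinates, where analytic metrics are analytic by DeTurck--Kazdan, rather than via Cartan's theorem; that difference is cosmetic). There is, however, a genuine gap in the continuation stage as you describe it. You apply the blow-up dichotomy to an arbitrary path $\sigma$: ``a blow-up is reached along $\sigma$ exactly when the corresponding blow-up is reached along $\phi\circ\sigma$; hence $\phi$ is defined over precisely the same extent on both sides.'' But geodesic completeness modulo scalar curvature singularities is a statement about \emph{geodesics} only: a maximal geodesic either exists for all parameter values or some invariant $\iota$ is unbounded along it. For a non-geodesic path the image $\phi_t(\sigma(t))$ may simply fail to converge in $\widetilde M$ (for instance, leave every compact set in finite parameter) while every scalar curvature invariant stays bounded along it; nothing in the hypothesis rules this out, and then the continuation halts with no contradiction. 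The repair is to perform the continuation along \emph{broken geodesics} only: the image curve is then again a broken geodesic, its scalar invariants are bounded because they coincide with those along the source curve, and the completeness hypothesis applied to this image geodesic forces its limit to exist. This is exactly the paper's Lemma \ref{lem_equivalence_limits}, which feeds into Lemma \ref{ac_gamma} (extendability along broken geodesics, whose proof also requires an analytic-extension argument in normal coordinates that your ``rigidity'' remark presupposes rather than supplies).

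Once the continuation is restricted to broken geodesics, a second gap appears: your appeal to ``the monodromy theorem for analytic continuation'' is no longer available off the shelf, since that theorem requires the germ to be continuable along \emph{every} path of a homotopy, not merely along broken geodesics. The paper avoids this by a different device: it assembles the germs of all continuations into a matched covering, i.e.\ a manifold $X$ with local isometries $F : X \to M$ and $\tilde F : X \to \widetilde M$, and proves that both are covering maps using the semi-Riemannian criterion that a local isometry through which all geodesics lift is a covering map, see \cite[Th. 7.28]{O}; the universal cover of $X$ is then the common universal Lorentzian cover, which gives the theorem. Your outline can be completed---the broken-geodesic continuation in fact produces isometries on simply convex neighborhoods of locally uniform size, after which continuation along arbitrary paths, and hence the monodromy theorem, become legitimate---but that completion is precisely the content of the paper's Theorem \ref{thm_glueing} and its lemmas; it is the hard part of the proof, not a quotable fact.
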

%\HOX{\emph{Matti has some comments on how to modify Theorem 2, I haven't done this. -Yang}}

Let us emphasize that the result is sharp in the sense that 
only the universal covering space, and not the manifold itself, can be determined.
For instance, the Minkowski space $\R^{1+3}$ and the flat torus ${\mathbb T}^{1+3}:=\R^{1+3}/\Z^4$ with the Minkowski metric
contain isometric subsets $(0,1)^4$,
and the time measurements on a small submanifold $\Sigma \subset (0,1)^4$
are identical in both cases. 

\section{Examples}

\subsection{Riemannian manifolds}

Let us illustrate the relation between Theorem \ref{thm_global} and the earlier results for Riemannian manifolds discussed in the introduction.

Let  $(\hat M,\hat g)$ be a real-analytic complete Riemannian manifold of dimension $n$, and let $\hat d$ be the
 Riemannian distance function on $\hat M$.
The
completeness could be replaced by an assumption similar to
 geodesically completeness modulo scalar curvature singularities.
Let us consider the 
%$(1+n)$ dimensional 
product manifold
$M=\R \times \hat M$ with the Lorentzian metric 
%\HOX{We have used Einstein summation already above, so use it also here. -Lauri}
\begin{align}
\label{prduct_metric}
g(t,x)=-dt^2 + \hat  g(x), \quad (t,x) \in \R \times \hat M,\quad \hat  g(x)=%\sum_{j,k=1}^n 
\hat g_{jk}(x)dx^jdx^k.
\end{align}
Let $\hat {\mathcal U}\subset
 \hat M$ be a simply convex open set and let $\hat \Sigma\subset \hat {\mathcal U}$ be an $n-1$ dimensional submanifold. 
 Let $\hat \nu (y)$ be a Riemannian normal vector of $\hat \Sigma$ at $y\in \hat \Sigma$. For some fixed $(\hat{y}_0,\hat{\eta}_0)\in T\hat{\Sigma}$, we assume the following is valid: 
\medskip

${\bf H2:}$
 There is an open neighborhood $\hat{U}\subset T\hat\Sigma$ of $(\hat{y_0},\hat{\eta_0})$ in $T\hat\Sigma$ satisfying the following:
for any $(y,\eta)\in \hat{U}$, there is $\epsilon>0$ such that if $r\in (0,\epsilon)$, then the Riemannian 
geodesic $\hat \gamma(s)$ with
      $$\hat \gamma(0)=y, \quad \p_s{\hat \gamma}(0)=\eta+r\hat \nu (y),$$
satisfies $\hat \gamma(s_0)\in\hat{\Sigma}$ for some $s_0>0$, and $\hat{\gamma}(s) \in\hat {\mathcal{U}}$ for $s\in (0,s_{0})$.
\medskip

\begin{figure}[h]
\begin{center}
\includegraphics[height=0.3\textheight]{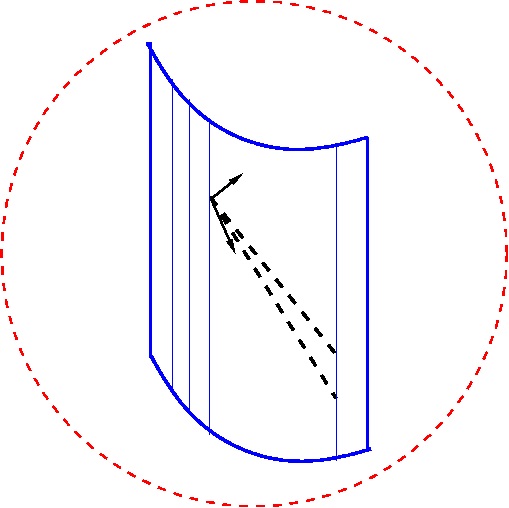}
\end{center}
\caption{A product manifold $M=\R \times \hat M$ with Lorentzian metric $g(t,x)=-dt^2 + \hat  g(x)$. Here $(\hat M, \hat g)$ is a real-analytic complete Riemannian manifold.} 
\end{figure}

In fact, the hypothesis ${\bf (H2)}$ on the Riemannian manifold $\hat{M}$ implies (\textbf{H}) on the Lorentzian manifold $M$. To see this, consider the point $(0,\hat{y_0})\in M$. We can choose $c_0\in\mathbb{R}$ sufficiently large so that $c_0\partial_{t}|_0+\hat{\eta_0} \in T_{(0,\hat{y_0})}M $ is timelike. Let $\pi:M\rightarrow\hat{M}$ be the canonical projection $\pi(t,y)=y$, then $\pi$ maps a geodesic $\gamma$ in $M$ to the Riemannian geodesic $\pi\circ\gamma$ in $\hat{M}$. In particular, $\pi$ projects the geodesic $\gamma$ in $M$ with the initial data
$$\gamma(0)=(t_0,y), \quad \dot{\gamma}(0)=c\partial_t|_{t_0}+\eta+r\pi^\ast\hat{\nu}(t_0,y)$$
to the geodesic $\pi\circ\gamma$ in $\hat{M}$ with the initial data
$$\pi\circ\gamma(0)=y,\quad \dot{(\pi\circ\gamma)}(0)=\eta+r\hat{\nu}(y).$$
Here $\pi^\ast\hat{\nu}$ is the lift to $\mathbb{R}\times\hat{\Sigma}$ of the vector field $\nu$. Thus if $\epsilon>0$ is chosen as in ${\bf (H2)}$, we have from ${\bf (H2)}$ that $\pi\circ\gamma(s_0)\in\hat{\Sigma}$ for some $s_0>0$ and $\pi\circ\gamma(s)\in\hat{\mathcal{U}}$ for $s\in (0,s_0)$. Consequently for large enough $\ell>0$, we conclude $\gamma(s_0)\in (-\ell,\ell)\times\hat{\Sigma}$ and $\gamma(s)\in (-\ell,\ell)\times\hat{\mathcal{U}}$ for $s\in (0,s_0)$. Finally, we choose $c$ to be sufficiently close to $c_0$ so that $\gamma$ is still timelike, and choose $t_0$ to be sufficiently close to $0$. Putting these together, we have shown that $(-\ell,\ell)\times\hat{\Sigma}$ is timelike convex near $((0,\hat{y}_0), c_0\partial_t |_{0}+\hat{\eta}_0)$. (here we use $(-\ell,\ell)\times\hat{\Sigma}$ instead of $\mathbb{R}\times\hat{\Sigma}$ since the former has compact closure, see the condition before (\textbf{H})). The simply convex neighborhood in (\textbf{H}) can be taken to be $\mathcal{U}:=(-2\ell,2\ell)\times\hat{\mathcal{U}}$.

%\HOX{Maybe this is more clear now? -Lauri}
%When ${\bf (H2)}$ is true, also the assumption (\textbf{H}) is valid on $M$.
%To see this, consider the point $(0,\hat{y_0})\in M$. We can choose $c_0\in\mathbb{R}$ sufficiently large so that
%$c_0\partial_{t}|_0+\hat{\eta_0} \in T_{(0,\hat{y_0})}M $
%is timelike. Let $\pi:M\rightarrow\hat{M}$ be the {\mltext canonical projection $\pi(t,y)=y$}. %\HOX{Here $\pi$ is now defined.}
%% and identify $\hat{M}$ with $\{0\}\times\hat{M}\subset M$.
%If $(y,\eta) \in \hat U$ and $t_0, c \in \R$, then $\pi$ maps
%the geodesic $\gamma$ with the initial data
%$$
%\gamma(0) = (t_0, y), \quad \dot \gamma(0) = c\partial_{t}|_0+\eta,
%$$
%to a geodesic that intersects $\hat \Sigma$.
%Moreover, if $c$ is close to $c_0$ then $\gamma$ is timelike.
%Let us also assume that $t_0$ is close to $0$.
%As $\gamma$ intersects $(-\ell,\ell)\times \hat \Sigma$ for large enough $\ell$,
%we can choose 
%${\mathcal U}=(-2\ell,2\ell)\times \hat {\mathcal U}$
 %and $\Sigma=(-\ell,\ell)\times \hat \Sigma$. 
 
In particular, if $\hat{N}\subset \hat{M}$ is an open set that has a strictly convex smooth boundary, then any
$\hat{y}_0\in \p \hat N$ has a simply convex neighborhood $\hat {\mathcal U}\subset
 \hat M$ and  $\hat{\Sigma}:= \hat {\mathcal U} \cap \partial\hat N$ satisfies the assumption ${\bf (H2)}$. That $\hat{\Sigma}$ satisfies ${\bf (H2)}$ simply follows from the strict convexity of $\partial\hat{N}$. As a result of the analysis in the previous paragraph, the submanifold $(-\ell,\ell)\times\hat{\Sigma}$ is timelike convex near $((0,\hat{y}_0),c_0\partial_t|_{0}+\hat{\eta}_0)$, that is, (\textbf{H}) is satisfied.

The restriction of the Riemannian distance function
 $\hat d|_{\hat \Sigma\times\hat \Sigma}$ determines the restriction of the Lorentzian distance
 function 
 $ d|_{\Sigma\times \Sigma}$ by
\begin{align}
  \label{def_timesep Riemann}
d((t_1,x_1)\, ,\, (t_2,x_2))=\left\{
\begin{array}{cl}
\sqrt{(t_2-t_1)^2-\hat d(x_1,x_2)^2 }, & \hbox{if }t_2-t_1>\hat d(x_1,x_2), \\
0, & \textrm{ otherwise.}\\
\end{array}\right.
\end{align}
  Thus, if we are given $\hat \Sigma$ and 
 $ \hat d|_{ \hat\Sigma\times  \hat\Sigma}$, we can determine
by Theorems \ref{thm_local} and \ref{thm_global},   the universal covering space of the Lorentzian manifold $(M,g)$.

The manifold $(M,g)$ is stationary (in fact, static) spacetime with the Killing field $Z=\frac \p{\p t}$
that corresponds to the ``direction of time''. Observe that there may be several Killing
fields, as can be seen considering the standard Minkowski space $\R^{1+3}$. All elements
in the Lorentz group $O(1,3)$ define an isometry of $\R^{1+3}$ that may change the time axis to
any timelike line.
In Section \ref{sec_stationary_analytic} we consider also determination of the Killing field in a stationary spacetime.

\subsection{Stationary spacetimes satisfying Einstein-scalar field equations}
\label{sec_stationary_analytic}

We will apply Theorem  \ref{thm_global} to the Einstein-scalar field model. For related inverse problems for the same model,
see \cite{KLS-Einstein1,KLS-Einstein2}. 

Let $M$ be a $(1+3)$ dimensional manifold.
Let us recall the Einstein field equation $\Ein(g) = T$, where the Einstein tensor $\Ein(g)$ is defined by
\begin{align*}
\Ein_{jk}(g) = \Ric_{jk}(g)-\frac 12 S(g)\,g_{jk}, \quad j,k=0,1,\dots,3,
\end{align*}
$\Ric$ is the Ricci curvature tensor, $S$ is the scalar curvature and $T$ is a stress-energy tensor.
If $T=0$ and $(M,g)$ is a solution to the Einstein field equation, then $(M,g)$ is called a vacuum spacetime.
We recall that a Lorentzian manifold $(M,g)$ is stationary if there is a timelike vector field $Z$ satisfying
\begin{align}
\label{def_Killing}
\mathcal L_Zg=0,
\end{align}
where $\mathcal L_Z$ is the Lie derivative with respect to the vector field $Z$.
The vector fields $Z$ satisfying (\ref{def_Killing}) are called Killing fields {and when
(\ref{def_Killing}) is valid, we say that $g$ is stationary with respect to the Killing field $Z$}.

Below, we consider the Einstein field equations with scalar fields $\phi=(\phi_\ell)_{\ell=1}^L$,
\begin{align} \label{eq: Ein1}
& \Ein_{jk}(g)+\Lambda g_{jk}=T_{jk}(g,\phi),\\
\label{eq: Ein2}
& T_{jk}(g,\phi)=\bigg(\sum_{\ell=1}^L\p_j\phi_\ell \,\p_k\phi_\ell
-\frac 12 g_{jk}g^{pq}\p_p\phi_\ell\,\p_q\phi_\ell\bigg)-{\mathcal V}( \phi)
g_{jk},
\\
\label{eq: Ein3}
& \square_g\phi_\ell+\mathcal V^\prime_\ell(\phi)=0.
\end{align}
Here $\Lambda \in \R$ is the cosmological constant and $\mathcal V:\R^L\to \R$ is a real-analytic function that physically corresponds
to the potential energy of the scalar fields,
e.g., $\sum_{\ell=1}^L \frac 12 m^2\phi_\ell^2$ and $\mathcal V^\prime_\ell(r)=\frac{\p}{\p r^\ell}\mathcal V(r_1,\dots,r_L)$. Another example of the real-analytic function $\mathcal{V}$ is the Higgs-type potential $\mathcal V(\phi)=c(|\phi|^2-m^2)^2$. Also,
\ba
\square_{g} u=
\sum_{p,q=0}^3 |g|^{-1/2}\frac  \p{\p x^p}
\left( |g|^{1/2}
g^{pq}\frac \p{\p x^q}u(x)\right),
\ea
where
$|g|=-\det(( g_{pq}(x))_{p,q=0}^3)$.

We will show that if $(M,g)$ is a solution to the Einstein field equations with scalar fields $\phi$,
and if both $(M,g)$ and $\phi$ are stationary, then $(M,g)$ is real-analytic.
{We say that $\phi$ is stationary with respect to $Z$   if $Z\phi_\ell=0$ for all $\ell=1,2,\dots,L$.
See e.g.\ \cite{Dz,Moffat} for examples on non-trivial, real-analytic, spherically symmetric solutions to equations (\ref{eq: Ein1})-(\ref{eq: Ein3}) with suitably chosen potentials $V(r)$.

M\"uller zum Hagen \cite{Muller} has shown that a stationary vacuum spacetime is real-analytic. 
This result has been generalized to several systems of Einstein equations coupled with
matter models, such as the Maxwell-Einstein equations \cite{Tod}.
Below we consider the Einstein field equations coupled with scalar fields,
and show that stationary solutions of such equations are real-analytic. Even though this result seems to be known in the folklore of the mathematical
relativity, we include for the sake of completeness a proof in Section \ref{sec_glob} as we want to
apply  Theorem   \ref{thm_global}  to this model.}

%\HOX{I'm a little confused here, do you mean the following construction? First, choose coordinates $(y_0,\dots)$ ($\dots$ means the other three coordinates, which we will not need) so that $\frac{\partial}{\partial y_0}=Z$; then choose coordinates $(\tilde{y_0},\dots,\tilde{y_3})$ and suppose in these coordinates $\frac{\partial}{\partial y_0}=Z=Z^0\frac{\partial}{\partial\tilde{y_0}}+\dots+Z^3\frac{\partial}{\partial\tilde{y_3}}$. Notice that 
%$\frac{\partial}{\partial\tilde{y_0}}|_{p}=Z(p)=\frac{\partial}{\partial y_0}|_{p}$, which implies $Z^{0}(p)=1, Z^{1}(p)=Z^{2}(p)=Z^{3}(p)=0$. Choose $y_j=\tilde{y_j}, j=1,2,3$, then
%$\frac{D\tilde{y}}{Dy}=\left(
%\begin{array}{cccc}
	%Z^0 & 0 & 0 & 0 \\
	%Z^1 & 1 & 0 & 0 \\
	%Z^2 & 0 & 1 & 0 \\
	%Z^3 & 0 & 0 & 1 \\
%\end{array}\right)
%$ which is the identity matrix at $p$. On the other hand, in the coordinates $(\tilde{y_0},\dots,\tilde{y_3})$, the metric at $p$ is $\tilde{g}(p)=\left(
%\begin{array}{cccc}
	%|Z(p)|^2 & 0 & 0 & 0 \\
	%0 & 1 & 0 & 0 \\
	%0 & 0 & 1 & 0 \\
	%0 & 0 & 0 & 1 \\
%\end{array}\right)$. Thus in the coordinates $(y_0,\dots,y_3)$ the metric is
%$g=(\frac{D\tilde y}{D y})^{T}\tilde{g}(\frac{D\tilde y}{D y})$, which at $p$ is diagonal. Therefore, the components of the inverse matrix $(g^{ij}(p))^{3}_{i,j=1}$ is positive definite at $p$. By continuity this is also true in a neighborhood of $p$.  -Yang
%
%}

\begin {proposition}\label{prop: Analytic stationary}
Let $M$ be a smooth manifold, $g$ be a smooth metric tensor on $M$,
$\phi_\ell$, $\ell=1,2,\dots,L$, be smooth functions on $M$,
and suppose that $(M,g)$ and $\phi = (\phi_\ell)_{\ell=1}^L$ satisfy the Einstein field equations (\ref{eq: Ein1})-(\ref{eq: Ein3}).
Suppose, furthermore, that there is a smooth timelike Killing field $Z$ on $M$ and that $Z \phi_\ell = 0$ for $\ell = 1, 2,\dots, L$.
Then $(M,g)$ is real-analytic.
\end {proposition}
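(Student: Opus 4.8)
The plan is to exploit the stationarity to convert the (diffeomorphism-covariant, hence non-elliptic) Einstein--scalar field system into a genuinely elliptic system on the three-dimensional orbit space of the Killing field, and then to invoke the classical analytic elliptic regularity theorem of Morrey. The point is that although the Einstein equations in a spacetime wave gauge are hyperbolic, the assumptions $\mathcal{L}_Z g = 0$ and $Z\phi_\ell = 0$ remove the time dependence and turn the reduced equations into an elliptic problem, for which analyticity of the coefficients propagates to analyticity of the solution.

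First I would set up adapted coordinates near an arbitrary point $p \in M$. Flowing along $Z$ and choosing a local hypersurface transverse to $Z$, one obtains coordinates $(x^0, x^1, x^2, x^3)$ in which $Z = \partial/\partial x^0$ and, by $\mathcal{L}_Z g = 0$, all components $g_{jk}$ depend only on the spatial coordinates $x = (x^1, x^2, x^3)$; likewise $\phi_\ell = \phi_\ell(x)$. Writing $V = -g(Z,Z) > 0$ for the squared norm of the Killing field and decomposing $g$ into the lapse $V$, the twist one-form, and a Riemannian quotient metric $h$ on the orbit space $N$, the spacetime is locally isometric to a stationary metric of standard form over $N$, with $V$, the twist, $h$ and $\phi$ all functions on $N$ alone.

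The heart of the argument is the reduction to an elliptic system. Projecting the field equations (\ref{eq: Ein1})--(\ref{eq: Ein3}) to $N$ yields a coupled system for $(h_{ij}, V, \omega, \phi_\ell)$, where $\omega$ is the twist potential; I would then fix the gauge by choosing harmonic coordinates for the quotient metric $h$. In such coordinates the principal part of the Ricci tensor becomes $-\tfrac12 h^{kl}\partial_k\partial_l h_{ij}$, so the equation for $h$ is quasilinear elliptic, the equations for $V$ and $\omega$ are of Laplace type, and the scalar field equation $\square_g \phi_\ell + \mathcal{V}'_\ell(\phi) = 0$ reduces to an elliptic equation on $N$. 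Crucially, every nonlinearity appearing---including the terms built from the real-analytic potential $\mathcal{V}$ and its derivatives---is a real-analytic function of the unknowns and their first derivatives. Hence the whole system is a quasilinear elliptic system with real-analytic dependence on $(u, \partial u)$, where $u = (h_{ij}, V, \omega, \phi_\ell)$.

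With ellipticity and analytic structure in hand, I would apply Morrey's theorem on the analyticity of solutions of nonlinear elliptic systems: a sufficiently regular solution of such a system is real-analytic in the harmonic coordinates. This shows that $V$, $\omega$, $h$ and $\phi$ are real-analytic functions of $x$, and since they are independent of $x^0$, the metric $g$ is real-analytic in the coordinates $(x^0, x)$. Finally, to promote this to a global real-analytic structure on $M$, I would use that transition maps between two overlapping harmonic coordinate systems of an analytic metric are themselves real-analytic, so the harmonic charts augmented by the time coordinate form a real-analytic atlas with respect to which $g$ is real-analytic. The main obstacle is the gauge reduction step: verifying that, after passing to the quotient and imposing spatial harmonic coordinates, the coupled Einstein--scalar field system is truly elliptic with real-analytic nonlinearities---in particular that the cross-terms coupling the twist, lapse and scalar fields do not spoil the principal symbol---requires some care, whereas once this is established the analyticity follows directly from Morrey's theorem.
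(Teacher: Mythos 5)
Your overall strategy---use stationarity together with a harmonic-type gauge to turn the field equations into an elliptic system and then invoke Morrey's analyticity theorem \cite{Morrey}---is the same as the paper's, but the reduction itself is genuinely different. You pass to the three-dimensional orbit space and take as unknowns the lapse $V$, the twist, the quotient metric $h$ and $\phi$, in the classical tradition of M\"uller zum Hagen \cite{Muller} and Tod \cite{Tod}. The paper never leaves the spacetime: it keeps all ten components $g_{\mu\nu}$ as unknowns (functions of the spatial variables only), constructs coordinates $y$ with $Z=\partial_{y^0}$, and then makes all four coordinates harmonic, including the time coordinate via the correction $x^0=y^0+h(y')$ with $\Box_g h=-\Box_g y^0$. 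Ellipticity is then automatic: since $Z$ is timelike, the spatial block $(g^{jk})_{j,k=1}^{3}$ of the inverse metric is positive definite, and in harmonic gauge the Ricci tensor has principal part $-\frac12\sum_{p,q=1}^{3}\hat g^{pq}\partial_p\partial_q g_{jk}$, so the system \eqref{eq: Ein1}--\eqref{eq: Ein3} is already a diagonal-principal-part elliptic system with real-analytic nonlinearities. What the paper's route buys is that one never has to compute the projected field equations on the quotient; what your route buys is contact with the standard stationary formalism, at the price of that computation.

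Beyond the ellipticity verification you flag yourself, your write-up has two concrete gaps. First, the existence of a twist \emph{potential} $\omega$ is not automatic once matter is present: it requires $\Ric(Z,X)=0$ for all $X$ orthogonal to $Z$. This does hold here, because $Z\phi_\ell=0$ forces $T(Z,X)=0$ for such $X$ by \eqref{eq: Ein2}, but it must be proved, and if instead you keep the cross-term one-form $\theta$ (writing locally $g=-V(dx^0+\theta)^2+h$) as an unknown, its equations involve only $d\theta$ and are not elliptic without an additional gauge condition on $\theta$. Second, and more seriously, your concluding claim that ``$g$ is real-analytic in the coordinates $(x^0,x)$'' does not follow from Morrey: the components $g_{0i}=-V\theta_i$ depend on the arbitrary \emph{smooth} transversal slice chosen at the outset, while Morrey only yields analyticity of the gauge-invariant quotient data $(h,V,d\theta,\phi)$. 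To conclude, you must re-choose the time function, i.e.\ replace $\theta$ by an analytic representative $\theta_0$ with $d\theta_0=d\theta$ and absorb the closed difference $\theta-\theta_0=df$ into $x^0$; only then is $g$ analytic in the resulting chart, after which analyticity of transition functions (isometries of analytic metrics, or the paper's normal-coordinate atlas) finishes the proof. The paper's substitution $x^0=y^0+h(y')$ is exactly this gauge-fixing step carried out in advance, which is why its argument closes without any such repair.
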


%Below, $Y|_p$ denotes the vector field $Y$ at a point $p\in M$.
%Let  $q\in M$ and $\xi_j\in T_qM$, $j=1,2,\dots,n$ be $n$ linearly independent vectors. 
%For a Killing field $Z$ in $(M,g)$, 
%we say that
%$((a^j)_{j=1}^n,(b_j^k)_{j,k=1}^n)$ are the Killing coefficients of $Z$ at $q$
%when
%\ba
%Z(q)=a^j\xi_j,\quad {\mathcal L}_{\xi_j} Z|_q=b_j^{k}\xi_k.
%\ea

%
%We say that $p$ is a 
%corner of submanifold $\Sigma\subset M$ if
%$p\in \overline \Sigma=\hbox{cl}(\Sigma)$ and there are $n=\hbox{dim}\,(M)$ linearly
%independent vectors $\xi_j\in T_pM$ and $h>0$ such that
%$\gamma_{p,\xi_j}((0,h))\subset \Sigma$. In Appendix A we give an 
%example of submanifold $\Sigma_\epsilon$ that has a corner point,
%that physically corresponds to the union of world lines of particles sent from the point $p$
%with a constant velocity to all directions.
%Below, ${\mathcal L}_{X} Y$ denotes the Lie derivative of the vector field $Y$ to the direction $X$.

%Theorem   \ref{thm_global}

If the manifolds $(M,g)$ and $(\widetilde{M},\widetilde{g})$ in Theorem \ref{thm_global} are simply-connected, we have the following determination result.

\begin{corollary}
\label{cor_global 2}
{Let
$(M,g)$ and $(\tilde M,\tilde g)$ be simply connected and geodesically complete modulo scalar curvature singularities,
let $\phi$ and $\tilde \phi$ be
$\R^L$-valued scalar fields  on $M$ and $\tilde M$, respectively, and suppose
 that $(M,g,\phi)$ and $(\tilde M,\tilde g,\tilde \phi)$
satisfy the Einstein field equations with scalar fields (\ref{eq: Ein1})-(\ref{eq: Ein3}). 
Also, assume that $g$, $\phi$ and $\tilde g$, $\tilde \phi$ are stationary with respect to timelike Killing fields $Z$ and $\tilde Z$, respectively.
Moreover, let  ${\mathcal U}\subset M$  and
$\tilde {\mathcal U}\subset \tilde M$  be simply convex open sets. Let 
$\Sigma \subset {\mathcal U}$ and
$\tilde \Sigma \subset \tilde  {\mathcal U}$ be relatively compact smooth {\mltext timelike} submanifolds 
%\HOX{Should be timelike surfaces. Also, $F=\Psi$  near $\hat x_0$. Also, we should have $q=\hat x$. }
of codimension 1, and 
let  $\Psi:{\mathcal U}\to \tilde {\mathcal U}$ be a diffeomorphism such 
that $\Psi(\Sigma)=\tilde \Sigma$ {\mltext and such that
$\Psi_*$ maps the future-pointing unit normal vector field  $\nu$  of $\Sigma$
to  the future-pointing unit normal vector field $\tilde  \nu$   of $\tilde \Sigma$.} 
Assume that $\Sigma$ and $\tilde \Sigma$ are timelike convex 
near $(\hat{x_0},\hat{\xi_0}) \in T \Sigma$ and $(\Psi(\hat{x_0}),\Psi_{\hat{x_0}}(\hat{\xi_0}))\in T \widetilde \Sigma$, respectively, and
that the Lorentzian distance functions $d$ of ${\mathcal U}$ and $\tilde{d}$ of $\tilde {\mathcal U}$ satisfy
\beq\label{sigma isometry}
d(x,y)=\tilde{d}(\Psi(x),\Psi(y)), \quad\quad\quad \hbox{for all }x,y\in\Sigma.
\eeq
Then there exists an isometry  $F:(M,g)\to (\tilde M,\tilde g)$.

Furthermore, {\mltext 
assume that 
$Z$ is transversal to $\Sigma$. Also, 
suppose  $\tilde \phi=\Psi_*\phi$ on $\tilde \Sigma$,
and $\tilde Z=\Psi_*Z$ at $\Psi(\hat x_0)$ and
 $\tilde \nabla_{\Psi_* X} \tilde Z=\Psi_* \nabla_X Z$ at
 $\Psi(\hat x_0)$   for all vectors $X\in T_{\hat x_0}M$.} Here $\nabla$ and $\tilde \nabla$ are the covariant derivatives of $(M, g$) and $(\tilde M, \tilde g$), respectively. Then 
$\tilde \phi=F_*\phi$  and  $\tilde Z=F_*Z$ on $\tilde M$.}
\end{corollary}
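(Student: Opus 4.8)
The plan is to separate the two assertions: first produce the isometry $F$ from the global theorem, and then transport the Killing field and the scalar field by combining the $1$-jet data at $\hat x_0$ with analytic continuation. To set up the first assertion I would invoke Proposition \ref{prop: Analytic stationary}: since $(M,g,\phi)$ and $(\tilde M,\tilde g,\tilde\phi)$ are stationary solutions of \eqref{eq: Ein1}--\eqref{eq: Ein3}, both $(M,g)$ and $(\tilde M,\tilde g)$ are real-analytic. Together with the standing hypotheses (relatively compact timelike $\Sigma,\tilde\Sigma$ in simply convex neighbourhoods, the timelike convexity, the equality \eqref{sigma isometry}, connectedness, and geodesic completeness modulo scalar curvature singularities), this places us exactly in the setting of Theorem \ref{thm_global}, which yields an isometry of the universal Lorentzian covering spaces. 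Since $M$ and $\tilde M$ are simply connected they are their own universal covers, so we obtain an isometry $F:(M,g)\to(\tilde M,\tilde g)$. I would record for later use that, by its construction through Theorem \ref{thm_local} and analytic continuation, $F$ coincides near $\hat x_0$ with the Fermi-coordinate identification induced by $\Psi$; in particular $F|_\Sigma=\Psi$ near $\hat x_0$, and since both $\Psi_*$ and $F_*$ carry $\nu$ to $\tilde\nu$ we have $F_*=\Psi_*$ as maps $T_{\hat x_0}M\to T_{\Psi(\hat x_0)}\tilde M$.

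For the Killing field I would set $W:=F^*\tilde Z$, the pullback vector field on $M$. Because $F$ is an isometry and $\tilde Z$ is Killing for $\tilde g$, the field $W$ is Killing for $g$, and $F$ intertwines the Levi-Civita connections, so $\nabla W=F^*(\tilde\nabla\tilde Z)$. Evaluating at $\hat x_0$ and using $F_*=\Psi_*$ there, together with the hypotheses $\tilde Z=\Psi_*Z$ and $\tilde\nabla_{\Psi_*X}\tilde Z=\Psi_*\nabla_X Z$ at $\Psi(\hat x_0)$, I obtain $W=Z$ and $\nabla_X W=\nabla_X Z$ at $\hat x_0$ for every $X$. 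A Killing field is determined by its value and its covariant derivative at a single point: the second covariant derivatives of $Z$ are expressed through $Z$ itself and the curvature, so the pair $(Z,\nabla Z)$ satisfies a closed linear first-order system along every curve, and on the connected manifold $M$ two Killing fields with the same $1$-jet at one point must coincide. Hence $W=Z$ on all of $M$, that is, $F_*Z=\tilde Z$.

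For the scalar field I would set $\psi:=\tilde\phi\circ F$. Since $F_*Z=\tilde Z$ and $\tilde Z\tilde\phi_\ell=0$, the identity $Z(\tilde\phi_\ell\circ F)=((F_*Z)\tilde\phi_\ell)\circ F$ gives $Z\psi_\ell=0$, so both $\phi$ and $\psi$ are constant along the flow of $Z$; moreover $\tilde\phi=\Psi_*\phi$ on $\tilde\Sigma$ together with $F|_\Sigma=\Psi$ near $\hat x_0$ gives $\psi=\phi$ on $\Sigma$ near $\hat x_0$. Because $Z$ is transversal to $\Sigma$, the map $(y,t)\mapsto\Theta_t(y)$, with $\Theta_t$ the flow of $Z$ and $y\in\Sigma$, is a local diffeomorphism onto a neighbourhood of $\hat x_0$, and constancy along $Z$ then forces $\psi=\phi$ on this open set. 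Finally, $\phi$ and $\tilde\phi$ are real-analytic as well (again by the proof of Proposition \ref{prop: Analytic stationary}), so $\psi-\phi$ is a real-analytic function vanishing on an open subset of the connected manifold $M$; the identity theorem then makes it vanish identically, giving $\tilde\phi=F_*\phi$ on $\tilde M$.

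I expect the main obstacle to be the bookkeeping that ties the globally constructed isometry $F$ back to the concrete $1$-jet data at $\hat x_0$: verifying rigorously that $F$ agrees with $\Psi$ to first order at $\hat x_0$, so that the hypotheses on $\tilde Z$, $\tilde\nabla\tilde Z$, and $\tilde\phi|_{\tilde\Sigma}$ can be fed into the Killing-field rigidity and the transversality argument, and that the analytic continuation producing $F$ restricts to the Fermi-coordinate identification. Once this identification is secured, the rigidity of Killing fields and the identity theorem for real-analytic functions are routine.
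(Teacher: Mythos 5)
Your overall architecture matches the paper's: Proposition \ref{prop: Analytic stationary} plus Theorem \ref{thm_global} (with simple connectedness) give the isometry $F$; the Killing field is then handled by $1$-jet rigidity, using the first-order system satisfied by $(\tilde Z,\tilde\nabla \tilde Z)$ along curves, and the scalar field by $Z$-invariance, transversality of $Z$ to $\Sigma$, and the identity theorem for real-analytic functions. Those last two steps are essentially identical to the paper's proof.

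The genuine gap is the step you yourself flag as ``bookkeeping'': the assertion that, \emph{by construction}, $F$ coincides near $\hat x_0$ with the Fermi-coordinate identification induced by $\Psi$, so that $F|_{\Sigma}=\Psi$ near $\hat x_0$ and $F_*=\Psi_*$ at $\hat x_0$. This does not follow from the construction. In the proof of Theorem \ref{thm_global}, $F$ agrees near $\hat x_0$ with the normal-coordinate identification $\tilde\psi\circ\psi^{-1}$ determined by a linear map $L$, whereas the Fermi identification is a different map, built from the merely smooth data $\Sigma$, $\Psi$, $\nu$. Theorem \ref{thm_local} only asserts equality of the $C^\infty$-jets of the two metrics \emph{at the single point} $\hat x_0$ in Fermi coordinates; since $\Sigma$ and $\Psi$ are only smooth, the Fermi coordinates are not real-analytic coordinates, so one cannot analytically continue in them to conclude that the Fermi identification is an isometry near $\hat x_0$, and hence one cannot identify it with $F$. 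At best, unwinding the construction (choosing $L=d\Psi|_{\hat x_0}$) yields the $1$-jet equality $F_*=\Psi_*$ at the single point $\hat x_0$. That is enough for your Killing-field step, but not for your scalar-field step: to start the flow-out along $Z$ and then apply the identity theorem you need $\tilde\phi\circ F=\phi$ on a relatively open subset $\Sigma_0\subset\Sigma$, i.e.\ you need the equality of maps $F|_{\Sigma_0}=\Psi|_{\Sigma_0}$, and agreement of $1$-jets at one point does not give this (equality along the single flow line through $\hat x_0$ does not force a real-analytic function to vanish identically). The paper closes exactly this gap with a separate argument: it introduces real-analytic distance coordinates $\mathcal D(y)=(d^{+}_{x_j}(y))_{j=1}^{n+1}$, where the $x_j\in\overline\Sigma$ are first hitting points on $\overline\Sigma$ of geodesics issued from $\hat x_0$; since $F$ is an isometry, $\mathcal D=\tilde{\mathcal D}\circ F$, while the hypothesis \eqref{sigma isometry} gives $\mathcal D|_{\Sigma_0}=\tilde{\mathcal D}\circ\Psi|_{\Sigma_0}$, and comparing the two yields \eqref{F on Sigma}, i.e.\ $F|_{\Sigma_0}=\Psi|_{\Sigma_0}$, from which $F_*=\Psi_*$ on $T_{\hat x_0}M$ follows using $F_*\nu=\tilde\nu=\Psi_*\nu$. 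You need this argument, or an equivalent one, to make your second step rigorous; with it in place, the remainder of your proof goes through as written.
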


The proof will be given in Sec.\ \ref{sec_glob}.
\medskip

{

Geodesic completeness is essential for the unique
solvability of inverse problems for partial differential equations similar
to (\ref{eq: Ein1})-(\ref{eq: Ein3}).
An important class of  
invisibility cloaking counterexamples is based on transformation optics \cite{GKLU1,GKLU2,GKLU3,GLU,Le,PSS1} and these examples are not geodesically complete.
By invisibility cloaking we mean the possibility, 
both theoretical and practical, of shielding a
region or object from
detection via electromagnetic or other physical fields.  

A model for a spacetime cloak is suggested in \cite{McCall}. There
a point $p\in \R^{1+3}$ is removed from the Minkowski space 
to obtain the spacetime $(N, g_0)$ with $N = \R^{1+3}\setminus \{p\}$ 
and
$g_0=\hbox{diag}(-1,1,1,1)$. Then $p$ is blown-up as follows:
let $M=
\R^{1+3}\setminus \overline B$, where $B$ is the Euclidean unit ball in $\R^4$, let $F:N\to M$ be a diffeomorphism, and define the metric $g_1=F_*g_0$ on $M$. The manifold 
$(M, g_1)$ can be considered as a  spacetime with
a hole $\overline B$, that can contain an object or an event, that is, a metric in $\overline B$ can be chosen freely.
%\HOX{Added, that is ... -Lauri}
When the manifolds $\overline B$ and $M$ are glued together,
we obtain a spacetime $\R^4$ with a singular metric that contains
the cloaked object in $\overline B$.
When $F$  is equal to the identity map outside a compact set, the metric
$g_1$ can be considered as ``spacetime cloaking device'' around $\overline B$.

The theory of such models have inspired laboratory experiments
\cite{Fridman} in optical systems analogous to the cloaking metric. 
The manifold $(M,g_1)$ is Ricci flat and stationary but it is not complete, or even 
 complete modulo scalar curvature singularities and therefore it does not 
 satisfy the assumptions of Theorem   \ref{thm_global} or Corollary \ref{cor_global 2}.
To the knowledge of the authors,
rigorous cloaking theory for Einstein equations, in particular the question in what
sense the non-linear Einstein field equations are valid in the cloaking examples,
is still open.

}

%
%{\bf TODO}
%\begin{itemize}
%\item Add examples of interesting manifolds satisfying the assumptions of the global theorem. Also add examples violating some of the assumptions e.g.
%an example where real-analytic structure is preserved accross an event horizon but stationarity breaks down.
%\item Add example on covering spaces e.g. Minkowski divided by a group. If we have a scalar field, can some symmetries be eliminated in some cases?
%\end{itemize}
%
%

%
%
%
%Let $M$ be a $C^5$-smooth manifold, $g$ be a $C^4$-smooth metric tensor on $M$,
%$\phi_\ell$, $\ell=1,\dots,L$, be $C^4$-smooth functions on $M$,
%and suppose that $(M,g)$ and $\phi = (\phi_\ell)_{\ell=1}^L$ satisfy the Einstein field equations (\ref{eq: Ein1})-(\ref{eq: Ein3}).
%Suppose, furthermore, that there is a $C^4$-smooth time-like Killing field $Z$ on $M$ and that $Z \phi_\ell = 0$ for $\ell = 1, \dots, L$.
%Then $(M,g)$ is real-analytic.
%\end {proposition}

\subsection{Schwarzschild black hole}

\begin{figure}[htbp]
\begin{center}
\includegraphics[width=\linewidth]{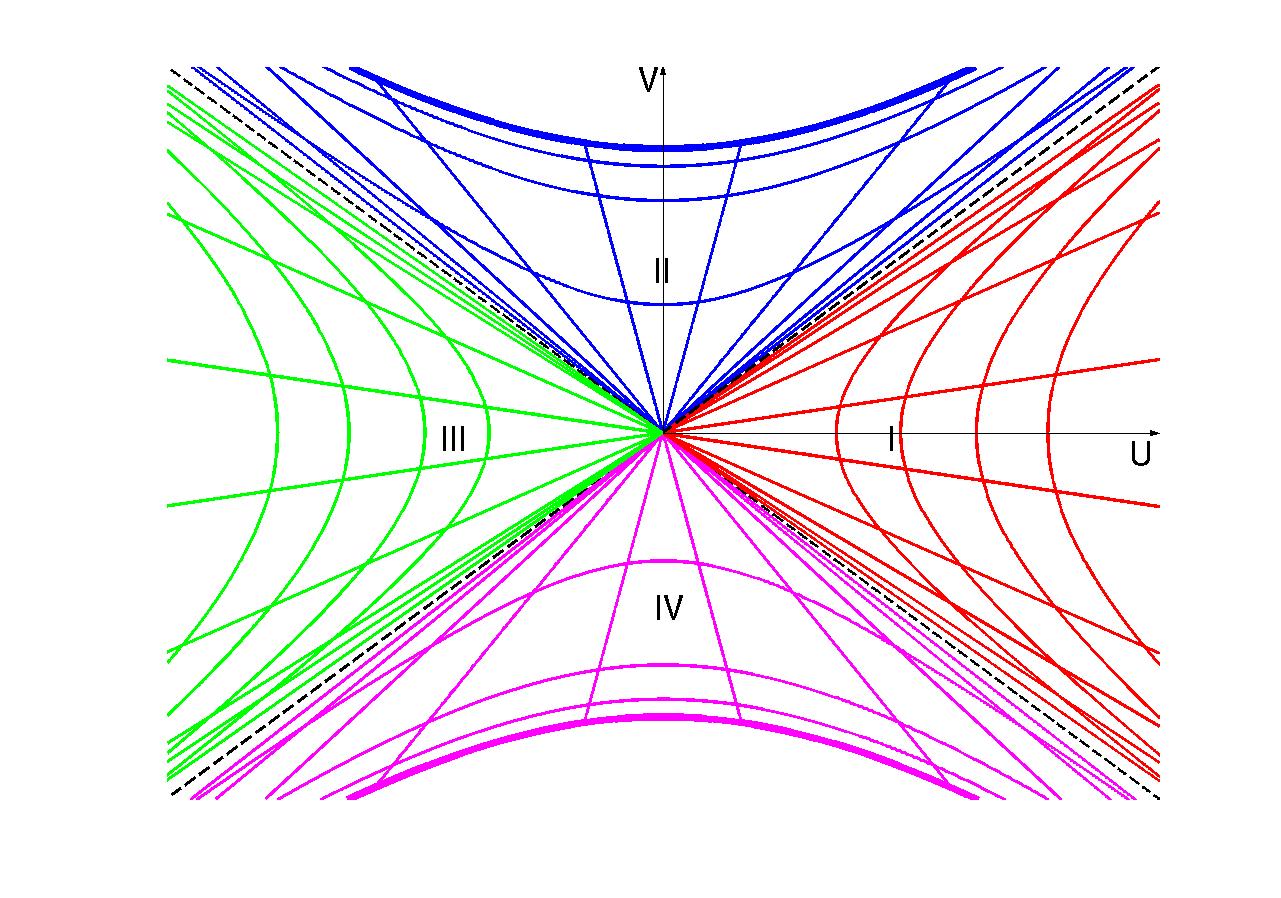}
\end{center}
\caption{{\bf Kruskal-Szekeres diagram.}  The diagram shows the domain $M$  in the $(U,V)$ plane.
 The quadrants are the black hole interior (II), the white hole interior (IV) and the two exterior regions 
 of the black hole (I and III). The  diagonal lines $U=V$ and $U=-V$, which separate these four regions, are the event horizons. The hyperbolic
 curves  which bound the top and bottom of the diagram are the physical singularities
 near which the Kretschmann scalar blow up. The hyperbolas represent contours of the Schwarzschild radial $r$ coordinate, and the lines through the origin represent contours of the Schwarzschild time coordinate $t$. The region I is usually interpreted as the exterior 
 of the black hole in one universe. The region III is  the exterior 
 of the black hole in an another, unreachable universe. 
}
\end{figure}

% http://en.wikipedia.org/wiki/Schwarzschild_metric
% http://en.wikipedia.org/wiki/Kruskal%E2%80%93Szekeres_coordinates
Let us recall the standard definition of 
 the maximally extended Schwarzschild black hole in the 
Kruskal-Szekeres  coordinates, see \cite[Sec.\ 13]{O}, \cite[Rem.\ 3.5.5] {O2}.  
Let us first consider the Schwarzschild coordinates 
 $(t,r,\theta,\phi)$, where $t\in \R$ is 
 the time coordinate (measured by a stationary clock located infinitely far from the massive body), 
 $r\in \R_+$ is the radial coordinate and $(\theta,\phi)$  are the spherical coordinates on the sphere $S^2$.
The non-extended Schwarzschild black hole is given on the chart 
$$(t,r,\theta,\phi)\in M_0=\R\times (\R_+\setminus
 \{R\})\times (-\pi/2,\pi/2)\times (-\pi,\pi)$$ by the metric
 \ba
 g = -\left(1 - \frac{R}{r} \right) dt^2 + \left(1-\frac{R}{r}\right)^{-1} dr^2 + r^2 \left(d\theta^2 + \sin^2\theta \, d\varphi^2\right), 
 \ea
%\HOX{The sign of this metric is changed. -Yang}
 where $R=2GM$ is the Schwarzschild radius. Here $G$ is the gravitational constant and
$M$ is the Schwarzschild mass parameter, and light speed $c = 1$.
 
On the set $M_0$ the Kruskal-Szekeres  coordinates are defined
%, from the Schwarzschild coordinates $(t,r,\theta,\phi)$, 
by replacing $t$ and $r$ by a new time and spatial coordinates $V$ and $U$,
\ba
  & &  V = \left(\frac{r}{R} - 1\right)^{1/2}e^{r/(2R)}\sinh\left(\frac{t}{2R}\right),\\
 & &   U = \left(\frac{r}{R} - 1\right)^{1/2}e^{r/(2R)}\cosh\left(\frac{t}{2R}\right),
 \ea
for the exterior region $r>R$, and
\ba
& &    V = \left(1 - \frac{r}{R}\right)^{1/2}e^{r/(2R)}\cosh\left(\frac{t}{2R}\right),\\
& &    U = \left(1 - \frac{r}{R}\right)^{1/2}e^{r/(sR)}\sinh\left(\frac{t}{2R}\right),
\ea
for the interior region $0<r<R$. 

In the Kruskal-Szekeres coordinates the manifold $(M_0,g)$ 
can be extended real-analytically to a larger manifold.
% Here $G$ is the gravitational constant and
%$M$ is the Schwarzschild mass parameter (we use light speed $c = 1$).
To consider maximal extension we denote 
\ba
M=     \{(U,V,w) \in \R^2\times S^2\ ;\
    V^2 - U^2 < 1,\ w\in S^2\}.
\ea
The metric is given by
\ba
    g= \frac{4R^3}{r}e^{-r/R}(-dV^2 + dU^2) + r^2 d\Omega^2,
\ea
 where 
%$d\Omega^2$ is the standard metric of $S^2$: When $(\theta,\phi)$ are the spherical coordinates
% of $w\in S^2$, we have  
$d\Omega^2{=}\ d\theta^2+\sin^2\theta\,d\varphi^2$, and
the location of the event horizon, i.e., the surface $r = 2GM$, is given by 
$V = \pm U$. 
%\HOX{Matti: I added the sentence ``Here $r$ is defined implicitly by....'', which you asked me to add in the email. -Yang}
Here $r$ is defined implicitly by the equation
$$V^2-U^2=\Big(1-\frac{r}{R}\Big)e^{r/R}.$$
Note that the metric is well defined and smooth on $M$, even at the event horizon.

%http://physics.stackexchange.com/questions/98552/is-it-possible-to-escape-from-within-event-horizon

The manifold $M$  with the above metric $g$ is Ricci flat, real-analytic and 
 geodesically complete modulo scalar curvature singularities.
 Indeed, the Kretschmann scalar %$K(U,V,w)$ 
for the Schwarzschild black hole
 goes to infinity as   $V^2 - U^2$  goes to $1$.

Let us consider a metric $\tilde g=g+h$, where the small perturbation $h$ is real-analytic on $M$.
We assume that also $\tilde g$ is
 geodesically complete modulo scalar curvature singularities. When $p\in M_0$, $\mathcal U\subset M$  
 is a simply convex neighborhood of a  $p$ in $(M,\tilde g)$, and $\Sigma\subset \mathcal U$
 is a 3-dimensional submanifold,
the inverse problem considered in Theorem \ref{thm_global}, can be interpreted as the question:
Do the measurements in the exterior of the event horizon on ``one side'' of a black hole 
(region I in Fig.\ 1) determine the structure of the spacetime inside the event horizon
(region  II in Fig.\ 1), or even on ``other side'' of the black hole (region  III in Fig.\ 1).
By  Theorem \ref{thm_global}, the answer to this question is positive.

Roughly speaking, this means that if the black hole spacetime has formed so
that it is real-analytic, any change of the metric in the  exterior region III changes
the metric close to the singularity, in the region II, and this  further  changes the metric
and the results of the time separation measurements in the region I.
On the other hand, the positive answer to the uniqueness of the inverse problem could be
considered as an argument that the assumption on the real-analyticity of the manifold
%\HOX{Discussion now shorter. - Matti}
and the metric is too strong assumption for {\mltext physical black holes}.
% that have not existed always, that is,
%for black holes that have formed at some time 
%after the Big Bang. 

%We note that the above determination of the metric $g$ on the maximally extended
%Lorentzian manifold relies heavily on the real-analyticity of $(M,g)$. If we remove 
% the assumption on the real-analyticity, it is interesting to ask if there are
%local perturbations of the space time in region I that force the metric in the
%region II to become non-stationary and if this causes perturbation for
%the location of the singularity inside the black hole or to the region III on the 
%other side of the black hole.
%\HOX{We could add here short example on cosmological models, such as
%Friedmann-Lemaitre-Robertson-Walker metric, see also
%$http://en.wikipedia.org/$ $wiki/ $  
%$Shape\_of\_the\_universe$
%%http://en.wikipedia.org/wiki/Shape_of_the_universe
%}

\subsection{Other examples from the theory of relativity}

There are several real-analytic  solutions of vacuum Einstein equations for which
Theorems   \ref{thm_local} or  \ref{thm_global} are applicable. These include e.g.\ the maximal
analytic extension of the Kerr 
black holes, see \cite{Boyer,Visser} that are real-analytic and 
 geodesically complete modulo scalar curvature singularities.
Similarly, the problem could be considered also for
 Kerr-Newman
black holes, the
solutions corresponding to several charged black holes (with the suitably chosen masses are charges),
that is, the so-called Majumdar-Papapetrou and
Hartle-Hawking solutions \cite{Hartle}, and suitable gravitational wave solutions. 
Also, one can consider 
 cosmological models, such as
Friedmann-Lemaitre-Robertson-Walker metrics, see \cite{SW}.
The detailed analysis
of the inverse problem for
these manifolds are outside the scope of this paper.
%Also, Theorems   \ref{thm_local} or  \ref{thm_global} are applicable for  solutions
%of various coupled Einstein equation - matter field solutions considered . 

\subsection{Material particles and clocks}

In this example we construct a specific timelike hypersurface $\Sigma$
satisfying the hypothesis {\bf H}, and give a physical motivation behind this construction.
%Let $(M,g)$ be a time-oriented $(1+n)$-dimensional smooth Lorentzian manifold.
We recall some facts from the theory of relativity, our main references are \cite{O} and \cite{SW}.
A point $(p,u)$ on the tangent bundle $TM$
is called an instantaneous observer if $u \in T_p M$ is a future-pointing timelike unit vector. Given such an instataneous observer, the Lorentzian vector space $T_{p}M$ admits a direct sum decomposition as
$$T_{p}M=\mathbb{R}u\oplus u^{\perp},$$
where $\mathbb{R}u$ is the 1-dimensional subspace spanned by $u$, and $u^{\perp}$ is an $n$-dimensional spacelike subspace which is orthogonal to $\mathbb{R}u$. $\mathbb{R}u$ is called the observer's time axis and $u^{\perp}$ the observer's restspace.

A smooth curve $\alpha:I\rightarrow M$ is called a material particle if it is future-pointing timelike and $(\dot{\alpha}(\tau),\dot{\alpha}(\tau))_{g}=-1$ for all $\tau\in I$.
%, where $(\cdot,\cdot)_{g}$ denotes the Lorentzian scalar product.
A material particle is said to be freely falling if it is a (necessarily timelike) geodesic.
We recall that if a material particle $\alpha$ passes through the point $p$, say $\alpha(0) = p$,
%and $(p,u)$ is an instantaneous observer,
then the energy $E$ and momentum $q$ of $\alpha$ as measured by the observer $(p,u)$ are
\begin{align*}
%\label{energy}
E = -(\dot{\alpha}(0), u)_{g}, \quad q = \dot{\alpha}(0) - E u.
\end{align*}
Thus we have the decomposition $\dot\alpha(0)=Eu+q$. Moreover, the Newtonian velocity as measured by $(p,u)$ is $v = q / E \in u^{\perp}$, see \cite[p. 45]{SW}.

Let $\mathcal{U}$ be a simply convex neighborhood of $p$ in $M$.
We recall that $d:\mathcal{U}\times\mathcal{U}\rightarrow\mathbb{R}$ is defined in (\ref{def_timesep})
and that $\gamma_{x,y}$ is the unique geodesic in $\mathcal{U}$
connecting points $x$ and $y$ in $\mathcal U$.
%, and $$L(\gamma_{x,y}):=\displaystyle\int^{1}_{0}|\dot{\gamma}_{x,y}(t)|_{g}\,dt.$$
Physically, if $\gamma_{x,y}$ is future-pointing and parametrized by arc length, one may think of $\gamma_{x,y}$ as a freely falling material particle, then $d(x,y)$ gives the elapsed proper time of the particle from the event $x$ to the event $y$.

Let $c_{0}\in (0,1)$ be a constant. We define $\mathcal{G}$ to be the set of freely falling material particles $\alpha$ with $\alpha(0) = p$ and with the Newtonian velocities satisfying $|v|_{g} = c_0$. Choose $\epsilon>0$ to be small so that the topological closure of the set
$$\Sigma_{\epsilon}:=\{\alpha(\tau):\alpha\in\mathcal{G}, \tau\in(0,\epsilon)\}$$
is contained in the simply convex neighborhood $\mathcal{U}$.

%It may not be immediately clear what $\Sigma_{\epsilon_{0}}$ looks like, so we give further quantitative analysis here. Indeed, for any
Notice that if $\alpha\in\mathcal{G}$, then by combining $(\dot{\alpha}(0),\dot{\alpha}(0))_{g}=-1$ and $|v|_{g}=|q/E|_{g}=c_{0}$ we get
$$
\begin{array}{rl}
-1= & (\dot{\alpha}(0),\dot{\alpha}(0))_{g}=(Eu+q,Eu+q)_{g}=-E^{2}+E^{2}(q/E,q/E)_{g}\vspace{1ex}\\
= & -E^{2}+c^{2}_{0}E^{2}.\\
\end{array}
$$
Hence $E=(1-c^{2}_{0})^{-1/2}$ and $|q|_{g}=c_{0}E$.
%=\frac{c_{0}}{\sqrt{1-c^{2}_{0}}}$.
Therefore, with the exponential map $\exp_{p}$, we can write $\alpha(\tau)$ as
$$\alpha(\tau)=\exp_{p}(\tau\dot{\alpha}(0))=\exp_{p}\left(\displaystyle\frac{\tau}{\sqrt{1-c^{2}_{0}}}(u+c_{0}\xi)\right) \quad \textrm{ for some  }\xi\in S^{n-1}$$
where $S^{n-1}$ denotes the unit sphere in the rest space $u^{\perp}$ of the instantaneous observer $(p,u)$. This expression then provides a parametrization of $\Sigma_{\epsilon}$. In fact, the map
$$
\begin{array}{rl}\vspace{1ex}
\kappa:(0,\epsilon)\times S^{n-1}\rightarrow & \mathcal{U}  \\ \vspace{1ex}
(\tau,\xi)\mapsto & \exp_{p}\left(\displaystyle\frac{\tau}{\sqrt{1-c^{2}_{0}}}(u+c_{0}\xi)\right)
\end{array}
$$
is a diffeomorphism of $(0,\epsilon)\times S^{n-1}$ onto $\Sigma_{\epsilon}$.

Let $(\tilde M, \tilde g)$ be another $(1+n)$-dimensional smooth Lorentzian manifold, and $(\tilde{p},\tilde{u})$ another instantaneous observer with $\tilde{p}\in M$ and $\tilde{u}\in T_{\tilde{p}}\widetilde{M}$. Likewise, we can define the set $\tilde \Sigma_{\epsilon}$ and assume $\epsilon >0$ is so small that $\tilde \Sigma_{\epsilon}$ is contained in a simply convex neighborhood $\widetilde{\mathcal{U}}$ of $\tilde{p}$. We also have that
$$
\begin{array}{rl}\vspace{1ex}
\tilde \kappa:(0,\epsilon)\times S^{n-1}\rightarrow & \tilde{\mathcal{U}} \\ \vspace{1ex}
(\tau,\xi)\mapsto & \widetilde{\exp}_{\tilde p}\left(\displaystyle\frac{\tau}{\sqrt{1-c^{2}_{0}}}(\tilde u+c_{0}\xi)\right)
\end{array}
$$
is a diffeomorphism of $(0,\epsilon)\times S^{n-1}$ onto $\tilde{\Sigma}_{\epsilon}$. Define the Lorentzian distance function $\tilde d$ on $\widetilde{\mathcal{U}}$ analogously to $d$. We can identify $\Sigma_{\epsilon}$ with $\widetilde{\Sigma}_{\epsilon}$ via the diffeomorphism $\tilde{\kappa}\circ\kappa^{-1}$. Using the notation of Theorem \ref{thm_local}, we can take $\Phi = \tilde{\kappa}\circ\kappa^{-1}$.

Now we relate the quantities appearing in Theorem \ref{thm_local} with the quantities in the present example.
Clearly the closure of $\Sigma_\epsilon$ is compact in $\mathcal{U}$.
%It will turn out that $\Sigma_{\epsilon_{0}}\backslash\{p\}$ is a timelike hypersurface as discussed in Section 2. By the above construction, the closure of $\Sigma_{\epsilon_{0}}\backslash\{p\}$ in $\mathcal{U}$ is
%$$\{\alpha(\tau):\alpha\in\mathcal{G}, \tau\in [0,\epsilon_{0}]\},$$
%which is compact.
As $\Sigma_{\epsilon}$ consists of future-pointing timelike geodesics parametrized by $S^{n-1}$, it is a timelike smooth submanifold of codimension $1$. To show that the hypothesis \textbf{H} holds, simply notice that $\Sigma_{\epsilon}$ is diffeomorphic to a cone minus the tip in $T_{p}M$, under the exponential map $\exp_{p}$. Since the cone minus the tip in $T_{p}M$ satisfies \textbf{H}, so does its image $\Sigma_{\epsilon}$.

Let us now give a physical interpretation of the above example. Imagine that a primary observer $(p,u)$ shoots out numerous other observers with the same Newtonian speed $c_{0}$ in all the directions, and these secondary observers move under the influence of gravitation. Using the notation as above, each secondary observer can be denoted by a freely falling (i.e., under the influence of gravitation only) material partical $\alpha$ with $\alpha(0)=p$, thus the collection of the secondary observers is the set $\mathcal{G}$ and the trajectories of the secondary observers in the spacetime $\mathbb{R}^{1+3}$ form the submanifold $\Sigma_\epsilon$, provided $\epsilon>0$ is small enough. Suppose each secondary observer carries many clocks, one of them is kept to read his own elapsed time, all the other clocks are to be emitted. Suppose when these secondary observers are shoot out from the primary observer, they start emitting continuously to each other the clocks which also move only under the influence of gravitation. Each emitted clock then records the elapsed time between the two secondary observers: the launcher and the receiver.

When a clock hits the receiver, he/she can read the clock to find out the elapsed time of this clock. After transmitting all the elapsed times collected this way to the primary observer, the primary observer standing at $p$ then knowns the elapsed time between any two secondary observers, which amounts to knowing the Lorentzian distance function $d$ on $\Sigma_\epsilon$. By the analysis in this section and Theorem \ref{thm_local}, the primary observer is able to determine (the $C^\infty$-jet of) the metric structure of the universe at $p$ at the beginning of the experiment. Furthermore, if the metric structure satisfies the assumptions of Theorem \ref{thm_global}, this measurement determines the universal Lorentzian covering space of the universe.

%Let us now give a physical interpretation of the above example. Imagine that a primary observer $(p,u)$ shoots out numerous other observers with the same Newtonian speed $c_{0}$ in all the directions. Then these secondary observers move under the influence of gravitation and start emitting
%continuously material particles that also move under gravitation.
%Suppose that the secondary observers and the emitted material particles carry
%clocks, that is,
%devices measuring the elapsed time as experienced by the observer or particle.

%When a particle hits a secondary observer, s/he reads the clock of the particle. After transmitting all the time separations collected this way
%to the primary observer, s/he is able to determine (the $C^{\infty}$-jet of) the metric structure of the universe at $p$ where s/he stood in the beginning of the experiment. Furthermore, if the metric structure satisfies the assumptions of Theorem \ref{thm_global}, this measurement determines the universal Lorentzian covering space of the universe.

\subsection{Incompatible extensions}
\label{ex_incompatible}

Let us now show that the assumption that the manifold is
geodesically complete modulo scalar curvature singularities is essential in
Theorem \ref{thm_global} and that the universal covering space can not be determined without
some kind of completeness assumption.
We do this by constructing a counterexample of
two manifolds one of which is not geodesically complete modulo scalar curvature singularities,
and such that both the manifolds have the same time measurement data.

We recall, see \cite[Def. 6.15]{Beem} and \cite[p.\ 58]{HE}, that an extension of a
Lorentzian manifold $(M,g)$ is a Lorentzian manifold $(M',g')$ together with a
map $f:M\to M'$ onto a proper open subset $f(M)$ of $M'$ such that $f:M\to f(M)$ is a diffeomorphism, and
$f_*g=g'$.
Also,
if $(M,g)$ has no  extension, it is said to be inextendible or maximal Lorentzian manifold.
If $(M,g)$, $(M',g')$ and the map $f$ are real-analytic, we say that $(M',g')$
is a real-analytic extension of $(M,g)$.

Let us consider the product manifold $\R \times S^2$ endowed with
the Lorentzian metric $\tilde g=-dt^2+\hat g_{S^2}$, where $\hat g_{S^2}$  is the standard Riemannian metric of
$S^2$. Let
$p_1$ and $p_2$
be the South and the North pole.
Also, let $M=\R \times (S^2 \setminus I(p_1, p_2))$ where $I(p_1, p_2)$ is one of the shortest
Riemannian geodesics connecting $p_1$ to $p_2$, that is, an arc of a great circle connecting
the South pole to the North pole. Here, the arc $I(p_1, p_2)$ is closed and contains the points
$p_1$   and $p_2$.
 Let us endow $M$ with the metric $g_M=\tilde g|_M$.

Next we construct two real-analytic extensions for manifold $M$, denoted by $M_e$ and $M_c$.
First, let
$M_e=\R \times S^2$  be a Lorentzian manifold with metric $g_e=\tilde g$.
Observe that manifold $M_e$ is simply connected and geodesically complete.
Second, let $N=\R \times (S^2 \setminus \{p_1, p_2\})$
 be a Lorentzian manifold with the metric $g=\tilde g|_N$ and
let $(M_c,g_c)$ be the universal covering space of $(N,g)$.
Using the spherical coordinates, we see that
$N$  is homeomorphic to $\R\times (0,\pi)\times S^1$ and the manifold $M_c$  is homeomorphic to
the simply connected manifold $\R\times (0,\pi)\times \R$.
The obtained manifolds $M_e$ and $M_c$ are real-analytic extensions of the manifold $M$.

%\HOX{As we are taking small $\mathcal U$ there seems to be no real reason to introduce $N$. -Lauri ; Later
%we mention that $N$ has two analytic extensions and I hope that this makes it easier for others to
%cite our example. - Matti}
Let $p\in S^2 \setminus I(p_1, p_2)$ and $B_{S^2}(p,r)$ denote
the open ball of radius $r$ and center $p$ in the Riemannian manifold $S^2$.
When $s>0$ is small enough, let $\mathcal U=(-s,s)\times B_{S^2}(p,s)$.
Then the manifolds $M_e$ and $M_c$ contain the set  $\mathcal U \subset M$ that is
a simply convex neighborhood of $x_0=(0,p)$. More precisely, both $M_e$ and $M_c$
contain a simply convex open set that is isometric to $(\mathcal U,\tilde g|_{\mathcal U})$
that can be considered as the data given in Theorems \ref{thm_local} and \ref{thm_global}.
 Note that
 the manifolds $M_e$ and $M_c$ are simply connected and therefore the both
 are their own universal covering spaces.

Let us next show that there are no
 3-dimensional Lorentzian manifold $(M_1,g_1)$ such that both
 $M_e$ and $M_c$ could be isometricly embedded in $M_1$.
 To show this, let us assume the opposite, that such manifold $M_1$ exists.
 Since $M_e$ is complete, it follows from \cite[Prop.\ 6.16]{Beem} that it is inextendible, i.e.  maximal.
Thus $M_1$ is isometric
 to $M_e$ and there  is an isometric embedding $F:M_c\to M_e$.

 Let us recall, see \cite[Sec.\ 1A]{O}, that when $\pi:\R\times S^2\to \R$ and
 $\sigma:\R\times S^2\to S^2$ are operators $\pi(s,q)=s$ and $\sigma(s,q)=q$,
 then the lift $L_\sigma X$, of a vector field $X$ defined on $S^2$, is the unique
 vector field $\tilde X=L_\sigma X$ defined on $\R\times S^2$   such
 that $d\sigma(\tilde X)=X$ and $d\pi(\tilde X)=0$. Then, at the point $(s,q)$,   we have  $\tilde X|_{(s,q)}\in T_{(s,q)} (\{s\}\times S^2)$.

 Using \cite[Prop.\ 58 on page 89]{O}, we see that the curvature
operator $R$ of $\R \times S^2$ at $x=(s,q)\in \R \times S^2$
is such that for all $X,Y\in T_x(\R \times S^2)$,
\ba
R(\p_s,X)Y=0\quad\hbox{and}\quad R(X,Y)\p_s=0.
\ea
Moreover, for all $U,V,W\in T_{(s,q)}(\{s\}\times S^2)$,  we have at $(s,q)\in  \R\times S^2$
\ba
R(U,V)W=L_\sigma(R_{S^2}(\sigma_*U,\sigma_*V)\sigma_*W),
\ea
that is, $R(U,V)W$   is equal to the lift of
 $R_{S^2}(\sigma_*U,\sigma_*V)\sigma_*W$,
 where $R_{S^2}$ is the Riemannian curvature operator of $S^2$.
This implies that the linear space
%\HOX{Renamed this since $\Sigma$ was in other use. -Lauri}
$$\mathcal V_x=\{R(X,Y)Z\in T_{(s,q)} (\R \times S^2)\ |\ X,Y,Z\in T_{(s,q)} (\R \times S^2)\},\quad x=(s,q),$$
is equal to the space $T_{(s,q)}(\{s\}\times S^2)$.

%\HOX{In this paragraph a few notations ``$\gamma^{M}_{x,\xi}$'' is changed to ``$\gamma^{M_e}_{x,\xi}$''. -Yang}
Since $F:M_c\to M_e$
is an isometry, we see that at $x\in M_c$ and $y=F(x)\in M_e$
the differential of $F$, $dF:T_xM_c\to T_yM_e$, maps $\mathcal V_x$ onto $\mathcal V_y$.
Let now $x=(s,q)\in M\subset N$ and $\xi \in  \mathcal V_x\subset T_xM$ and
so that the geodesic $\gamma^{M_e}_{x,\xi}$ on $M_e$  is a great circle on $\{s\}\times S^2$.
Moreover, let us assume that $\xi$
is such that $\gamma^{M_e}_{x,\xi}$  does not intersect $\R\times \{p_1\}$ or
$\R\times \{p_2\}$. 
%\HOX{It seems that the geodesic $\gamma^{M}_{x,\xi}\subset M$ will automatically not intersect $\R\times \{p_1\}$ or
%$\R\times \{p_2\}$ since $M=\mathbb{R}\times(S^2\backslash{\{p_1,p_2\}})$. -Yang} 
Then, considering $(x,\xi)\in TN$ as an element of $TM_c$ we see that
the geodesic $\gamma^{M_c}_{x,\xi}$ on $M_c$ is complete  and it is  homeomorphic to
the real axis. However, its image of $F$, that is, $F(\gamma^{M_c}_{x,\xi}(\R))=\gamma^{M_e}_{F(x),F_{\ast}(\xi)}(\R)\subset M_e$ is
a closed geodesic that is homeomorphic to $S^1$. This is in contradiction with the assumption that
$F$ is an isometric embedding.

Summarizing, we have seen that the manifold $M$ has two real-analytic extensions
$M_e$ and $M_c$ that can not be isometricly embedded in any  connected manifold $M_1$ of dimension 3 that
would contain  both of them and both these manifolds contain a subset $\mathcal U$ with metric $\tilde g|_{\mathcal U}$. This shows that assumption that the manifold is geodesically complete modulo scalar curvature singularities is essential in
Theorem \ref{thm_global}.

\section{Proof on the $C^{\infty}$-jet determination}
\label{sec_local}

{In this section we investigate the local inverse problem of the  $C^\infty$-jet determination.} First, we establish some basic facts about the Lorentzian distance function $d(x,y)$. Sometimes we would like to fix $x$ and think of $d(x,y)$ as a function of $y$, in this case we may write $d(x,y)$ as $d^{+}_{x}(y)$; sometimes we would like to fix $y$ and think of $d(x,y)$ as a function of $x$, in this case we may write $d(x,y)$ as $d^{-}_{y}(x)$. Notice that $d^{+}_{x}(y)>0$ if and only if $y\in I^{+}_{\mathcal{U}}(x)$; $d^{-}_{y}(x)>0$ if and only if $x\in I^{-}_{\mathcal{U}}(y)$.
{We start with the following simple results whose proof we include
for the convenience of the reader.}

\begin{lemma} \label{measurement}
Let  $\mathcal  U$ be a simply convex neighborhood on a smooth Lorentzian manifold $(M,g)$.
%and consider the Lorentzian distance function $d$ on $\mathcal U$.
Then
\begin{enumerate}[(i).]
    \item $d$ is continuous in $\mathcal{U}\times\mathcal{U}$.
    \item $d$ is smooth in $\{(x,y)\in\mathcal{U}\times\mathcal{U}:x\ll y\}$.
    \item Let $c:[0,\epsilon)\rightarrow\mathcal U$ be a smooth curve with $c(0)=x$ and $x\ll c(s)$ for all $s\in (0,\epsilon)$, then
        $$\displaystyle\lim_{s\rightarrow 0+}\frac{d^{+}_{x}(c(s))}{s}=\lim_{s\rightarrow 0+}\frac{d(x,c(s))}{s}=|\dot{c}(0)|_{g}.$$
    \item For $y\in I^{+}_{\mathcal{U}}(x)$, let $\gamma:[0,\ell]\rightarrow \mathcal{U}$ be a future pointing timelike radial geodesic with $\gamma(0)=x$, $\gamma(\ell)=y$, then
        $$\textrm{grad}\;d^{+}_{x}(y)=-\frac{\dot{\gamma}(\ell)}{|\dot{\gamma}(\ell)|_{g}}.$$
        %{measurement}
    \item The eikonal equation
    $$(\textrm{grad}\;d^{+}_{x}(y),\textrm{grad}\;d^{+}_{x}(y))_{g}=-1$$
    holds for $y\in I^{+}_{\mathcal{U}}(x)$.
\end{enumerate}
\end{lemma}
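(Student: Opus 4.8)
My plan is to reduce all five statements to the smoothness and Taylor behaviour of a single auxiliary object, the world function of $\mathcal U$. By simple convexity the map $(x,v)\mapsto(x,\exp_x v)$ is a diffeomorphism from a neighbourhood of the zero section of $T\mathcal U$ onto $\mathcal U\times\mathcal U$, so $v(x,y):=\exp_x^{-1}(y)\in T_xM$ is a smooth $T_xM$-valued function of $(x,y)\in\mathcal U\times\mathcal U$. I would set $\Omega(x,y):=\frac{1}{2}(v(x,y),v(x,y))_g$, which is smooth, and note that the connecting geodesic $\gamma_{x,y}(t)=\exp_x(t\,v(x,y))$ has constant speed, so $L(\gamma_{x,y})=|v(x,y)|_g$. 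Consequently $\{x\ll y\}=\{(x,y):v(x,y)\text{ is future timelike}\}$ is open, $\Omega<0$ there, and
$$d(x,y)=(-2\Omega(x,y))^{1/2}\ \text{ on }\{x\ll y\},\qquad d\equiv 0\ \text{ otherwise}.$$
Everything else follows from this representation.

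Parts (i), (ii) and (iii) are then formal. For (ii), on the open set $\{x\ll y\}$ we have $\Omega<0$, so $(-2\Omega)^{1/2}$ is a smooth function of the smooth $\Omega$. For (i) I would check continuity only across the common boundary of $\{x\ll y\}$ and its complement, the interiors being already settled. If $(x_n,y_n)\to(x_0,y_0)$ with $(x_n,y_n)\in\{x\ll y\}$ and $(x_0,y_0)$ a boundary point, then $v(x_n,y_n)\to v(x_0,y_0)$ lies in the closure but not the interior of the future timelike cone at $x_0$; a boundary vector of the cone is null, so $\Omega(x_0,y_0)=0$ and $d(x_n,y_n)\to 0=d(x_0,y_0)$. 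Splitting an arbitrary sequence into its part in $\{x\ll y\}$ and its part in the open complement, where $d\equiv 0$, gives continuity. For (iii), since $(d\exp_x)_0=\mathrm{id}$ I have $v(x,c(s))=s\,\dot c(0)+O(s^2)$, hence $\Omega(x,c(s))=\frac{1}{2}s^2(\dot c(0),\dot c(0))_g+O(s^3)$; the hypothesis $x\ll c(s)$ forces $v(x,c(s))$ future timelike, so dividing by $s>0$ and letting $s\to0+$ shows $\dot c(0)$ lies in the closed future cone, whence $|\dot c(0)|_g=(-(\dot c(0),\dot c(0))_g)^{1/2}$ and $d(x,c(s))/s=(-2\Omega(x,c(s))/s^2)^{1/2}\to|\dot c(0)|_g$.

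The heart of the argument is (iv), a first-variation (Gauss-lemma) computation. Fixing $x$ and taking a curve $y(s)$ with $y(0)=y$, $\dot y(0)=w$, I would write $\Omega(x,y(s))=\frac{1}{2}\int_0^1(\partial_t\gamma_s,\partial_t\gamma_s)_g\,dt$ for the geodesic variation $\gamma_s=\gamma_{x,y(s)}$ and differentiate, using the symmetry $D_s\partial_t\gamma=D_t\partial_s\gamma$, the geodesic equation $D_t\partial_t\gamma=0$, and $\partial_s\gamma(0)=0$ (the initial point is fixed):
$$\frac{d}{ds}\Big|_{s=0}\Omega(x,y(s))=\int_0^1(D_t\partial_s\gamma,\partial_t\gamma)_g\,dt=\big[(\partial_s\gamma,\partial_t\gamma)_g\big]_{t=0}^{1}=(w,\dot\gamma_{x,y}(1))_g.$$
Hence $\grad_y\Omega(x,y)=\dot\gamma_{x,y}(1)$, the terminal velocity of the geodesic on $[0,1]$. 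The chain rule applied to $d^+_x=(-2\Omega)^{1/2}$ gives $\grad\,d^+_x(y)=-\dot\gamma_{x,y}(1)/d^+_x(y)$; since the speed is constant, $|\dot\gamma_{x,y}(1)|_g=|v(x,y)|_g=d^+_x(y)$, and as the quotient $-\dot\gamma/|\dot\gamma|_g$ is independent of the affine parametrisation it coincides with $-\dot\gamma(\ell)/|\dot\gamma(\ell)|_g$ for the arc-length geodesic of the statement.

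Finally (v) is immediate: by (iv), $\grad d^+_x(y)=-U$ where $U$ is a future-pointing unit timelike vector with $(U,U)_g=-1$, so $(\grad d^+_x,\grad d^+_x)_g=(U,U)_g=-1$. The only genuinely delicate points are the boundary continuity in (i), where I must use that a limit of future timelike vectors failing to be timelike is null (so $\Omega\to 0$), and the first-variation identity in (iv), which carries the real content; (ii), (iii) and (v) are then purely formal consequences of the representation $d=(-2\Omega)^{1/2}$.
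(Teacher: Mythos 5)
Your proof is correct, and it shares the paper's overall strategy --- both arguments rest on the fact that simple convexity makes $(x,y)\mapsto\exp_x^{-1}(y)$ jointly smooth, which the paper proves by showing that $E(x,v)=(x,\exp_x v)$ is a diffeomorphism and you simply assert --- but your treatment of the key part (iv) is genuinely different. The paper differentiates the radial-isometry identity $d^+_x(\gamma(t))=|\dot\gamma(0)|_g\,t$ to get $(\grad d^+_x(\gamma(t)),\dot\gamma(t))_g=|\dot\gamma(0)|_g$, and then cites the Gauss lemma to conclude that $\grad d^+_x$ is proportional to $\dot\gamma$, fixing the proportionality constant afterwards; you instead compute the gradient of the world function $\Omega(x,y)=\frac 12(\exp_x^{-1}y,\exp_x^{-1}y)_g$ by a first variation of energy, obtaining $\grad_y\Omega=\dot\gamma_{x,y}(1)$, and apply the chain rule to $d^+_x=(-2\Omega)^{1/2}$. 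This is in effect a self-contained re-derivation of the Gauss-lemma content that the paper imports from O'Neill, at the cost of a slightly longer computation. A side benefit of organizing everything around $\Omega$ is that your continuity argument in (i) is more careful than the paper's: the paper's displayed formula $d(x,y)=\sqrt{|(\exp_x^{-1}y,\exp_x^{-1}y)_g|}$ is false for spacelike-separated pairs (there $d=0$ while the right-hand side is positive), whereas you correctly treat the complement of $\{x\ll y\}$ separately and use that a limit of future timelike vectors which is not timelike is null (or zero), so $\Omega\to 0$ across the cone boundary. Two trivial slips in your write-up, neither a gap: the complement of $\{x\ll y\}$ is closed rather than open (but $d\equiv 0$ on all of it, which is all you use), and the limiting boundary vector in (i) can be zero as well as null (again $\Omega=0$ either way). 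Finally, note that both you and the paper take for granted the standard convex-neighborhood fact that $x\ll y$ if and only if $\exp_x^{-1}y$ is future-pointing timelike.
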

%\HOX{Maybe remove the proof from the final version?  Maybe it is good to include the proof. - Matti}
\begin{proof}
\begin{enumerate}[(i).]
\item  As the exponential map is a radial isometry \cite[Chapter 5 Lemma 13]{O}, we have
$$d(x,y)=|\exp^{-1}_{x}y|_{g}=\sqrt{|\left(\exp^{-1}_{x}y,\exp^{-1}_{x}y\right)_{g}|}.$$
It remains to show that $\exp^{-1}_{x}y$, as a function of $(x,y)$, is continuous. In fact we will prove a stronger result: $\exp^{-1}_{x}y$ is a smooth function of $(x,y)\in\mathcal{U}\times\mathcal{U}$.

To this end, introduce the notations
$$
\begin{array}{rl}
\mathcal{D}:= & \{(x,v)\in TM:(x,\exp_{x}v)\in\mathcal{U}\times\mathcal{U}\},\vspace{1ex}\\
\mathcal{D}_{x}:= & \{v\in T_{x}M:(x,v)\in\mathcal{D}\}. \\
\end{array}
$$
It is easy to see that $\mathcal{D}$ and $\mathcal{D}_{x}$ are open subsets of $TM$ and $T_{x}M$, respectively. Consider the map
$$E:\mathcal{D}\rightarrow \mathcal{U}\times\mathcal{U}, \quad\quad E(x,v)=(x,\exp_{x}v).$$
By simply convexity of $\mathcal{U}$, the exponential map $\exp_{x}:\mathcal{D}_{x}\rightarrow\mathcal{U}$ is non-singular at any $v\in\mathcal{D}_{x}$, thus by \cite[Chapter 5 Lemma 6]{O}, $E$ is non-singular at any $(x,v)\in\mathcal{D}$. Notice that $E$ is a smooth map between manifolds of the same dimension, we conclude $E$ is a local diffeomorphism. Notice further that $E$ is bijective on $\mathcal{D}$, we come to the conclusion that $E$ is in fact a diffeomorphism. By setting $y=\exp_{x}v$ it follows that $\exp^{-1}_{x}y$ is smooth for $(x,y)\in\mathcal{U}\times\mathcal{U}$. This completes the proof.

\item If $x\ll y$, then the smooth function $\exp^{-1}_{x}y$ is non-vanishing. From the expression
        \beq\label{inverse exp}
       d(x,y)=|\exp^{-1}_{x}y|_{g}=\sqrt{-\left(\exp^{-1}_{x}y,\exp^{-1}_{x}y\right)_{g}}\eeq
        it is straightforward that $d$ is smooth in $\{(x,y)\in\mathcal{U}\times\mathcal{U}:x\ll y\}$.

\item  Choose a normal coordinate chart $(\varphi,\xi^{i})$ where $\varphi=\exp^{-1}_{x}$. Let $\xi(s)=\varphi(c(s))$. The function $d^{+}_{x}$ composed with this normal coordinate chart reads
$$d^{+}_{x}\circ\varphi^{-1}:\xi(s)\mapsto\displaystyle\sqrt{-(\xi(s),\xi(s))_{g}}.$$
Notice that $\xi(0)=\varphi(x)=0$, so $\xi(s)=\dot{\xi}(0)s+\mathcal{O}(s^{2})$. From this we conclude
$$\displaystyle\lim_{s\rightarrow 0+}\frac{d^{+}_{x}(c(s))}{s}=\lim_{s\rightarrow 0+}\frac{d^{+}_{x}\circ\varphi^{-1}(\xi(s))}{s}=|\dot{\xi}(0)|_{g}=|\dot{c}(0)|_{g}.$$\\

\item  The exponential map is a radial isometry, thus
$$d^{+}_{x}(\gamma(t))=d(x,\gamma(t))=|\dot{\gamma}(0)|_{g}t.$$
Differentiate to get
\begin{equation}\label{chain_rule}
(\textrm{grad}\;d^{+}_{x}(\gamma(t)),\dot{\gamma}(t))_{g}=|\dot{\gamma}(0)|_{g}.
\end{equation}
Since $\dot{\gamma}(t)$ is orthogonal to the level sets of $d^{+}_{x}$ by Gauss Lemma \cite[Chapter 5 Lemma 1]{O}, and since $\textrm{grad}\,d^{+}_{x}$ is also orthogonal to the level sets of $d^{+}_{x}$, there exists a function $C(t)$ such that $\textrm{grad}\;d^{+}_{x}(\gamma(t))=C(t)\dot{\gamma}(t)$. As $\gamma$ is a geodesic, $|\dot{\gamma}(0)|_{g}=|\dot{\gamma}(t)|_{g}$ for all $0\leq t\leq\ell$. Therefore, using \eqref{chain_rule} we derive that
$$C(t)=-\frac{1}{|\dot{\gamma}(t)|_{g}}.$$
Setting $t=\ell$ completes the proof.

\item  Using the result in (iv) and that $(\dot{\gamma}(t),\dot{\gamma}(t))_{g}=-|\dot{\gamma}(t)|^{2}_{g}$ we have
$$(\textrm{grad}\;d^{+}_{x}(y),\textrm{grad}\;d^{+}_{x}(y))_{g}=(-\frac{\dot{\gamma}(t)}{|\dot{\gamma}(t)|_{g}},-\frac{\dot{\gamma}(t)}{|\dot{\gamma}(t)|_{g}})_{g}=-1.$$
\end{enumerate}
\end{proof}

\textbf{Remark:} Throughout this paper we only assume to know the Lorentzian distance between any two points on $\Sigma$. However, Lemma \ref{measurement}(i) says that by continuity we can further know the Lorentzian distance between any two points on the closure $\overline{\Sigma}$. In other words, we know not only $d|_{\Sigma\times\Sigma}$, but also $d|_{\overline{\Sigma}\times\overline{\Sigma}}$. This observation is useful in certain circumstances.
\vspace{2ex}

%\HOX{Would prefer $\sim$ instead of $\tilde{}\,$ here. This appears a couple of times also below. This is the only place that I modified. -Lauri}

%\HOX{Lauri: I moved the sentence ``Without loss of generality we can suppose...'' from the proof of the next proposition to here. -Yang}
Without loss of generality we can suppose in the assumption of Theorem \ref{thm_local} that $\hat{\xi_0}$ is a past-pointing timelike vector, and we do this assumption below. From now on, we will systematically use $\sim$ to denote the quantities which are related via the diffeomorphism $\Phi$; for instance, $\tilde{x}:=\Phi(x)$.

As the first step towards proving Theorem \ref{thm_local}, the following proposition says that the restriction of the Lorentzian distance function $d|_{\Sigma\times\Sigma}$ determines the metric $g$ on the tangent bundle of $\Sigma$.

\begin{proposition}\label{boundary}
Under the assumption of Theorem 1, for any $x\in\Sigma$ and any $\xi\in T_{x}\Sigma$, we have
$$(\xi,\xi)_{g}=(\tilde{\xi},\tilde{\xi})_{\tilde{g}}$$
where $\tilde{\xi}:=\Phi_{\ast x}(\xi)$ is the image of $\xi$ under the push-forward $\Phi_{\ast}$ at $x$. Consequently, by polarization $g=\Phi^{\ast}\tilde{g}$ on $T_{x}\Sigma$.
\end{proposition}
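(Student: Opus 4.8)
The plan is to compute the $g$-length of a tangent vector $\xi\in T_x\Sigma$ purely from the measured data via Lemma \ref{measurement}(iii), and then to transfer the result to $(\tilde M,\tilde g)$ using the hypothesis $d(x,y)=\tilde d(\Phi(x),\Phi(y))$. First I would treat a future-pointing timelike $\xi\in T_x\Sigma$; since $\Sigma$ is timelike, such vectors form a nonempty open cone in $T_x\Sigma$. Pick any smooth curve $c:[0,\epsilon)\to\Sigma$ with $c(0)=x$ and $\dot c(0)=\xi$. In normal coordinates centered at $x$ one has $\exp^{-1}_x(c(s))=s\xi+\mathcal O(s^2)$, so because the future timelike cone is open, $c(s)\in I^+_{\mathcal U}(x)$ for all small $s>0$; in particular $d(x,c(s))>0$. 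Lemma \ref{measurement}(iii) then gives
$$\lim_{s\to 0^+}\frac{d(x,c(s))}{s}=|\xi|_g.$$

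Next I would push the curve forward by $\Phi$. The curve $\tilde c:=\Phi\circ c$ lies in $\tilde\Sigma$ with $\tilde c(0)=\tilde x$ and $\dot{\tilde c}(0)=\tilde\xi=\Phi_{\ast x}(\xi)$. By hypothesis $\tilde d(\tilde x,\tilde c(s))=d(x,c(s))>0$, so $\tilde x\ll\tilde c(s)$, i.e.\ $\tilde c(s)\in I^+_{\tilde{\mathcal U}}(\tilde x)$, for all small $s>0$. Applying Lemma \ref{measurement}(iii) on $(\tilde{\mathcal U},\tilde g)$ yields
$$|\tilde\xi|_{\tilde g}=\lim_{s\to 0^+}\frac{\tilde d(\tilde x,\tilde c(s))}{s}=\lim_{s\to 0^+}\frac{d(x,c(s))}{s}=|\xi|_g,$$
so that $|(\tilde\xi,\tilde\xi)_{\tilde g}|=|(\xi,\xi)_g|$.

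The remaining point is to fix the sign. Since $\tilde c(s)\in I^+_{\tilde{\mathcal U}}(\tilde x)$, the vectors $\exp^{-1}_{\tilde x}(\tilde c(s))$ are timelike and future-pointing; dividing by $s$ and letting $s\to 0^+$ shows that $\tilde\xi$ is a limit of future-pointing timelike vectors, hence causal and future-pointing. If $\tilde\xi$ were null we would get $|\xi|_g=0$, contradicting that $\xi$ is timelike; therefore $\tilde\xi$ is timelike, whence $(\tilde\xi,\tilde\xi)_{\tilde g}<0$ and $(\xi,\xi)_g<0$, and the equality of absolute values upgrades to $(\xi,\xi)_g=(\tilde\xi,\tilde\xi)_{\tilde g}$. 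This establishes the identity for every future-pointing timelike $\xi$, and since the quadratic form is unchanged under $\xi\mapsto-\xi$ while $\Phi_{\ast x}(-\xi)=-\tilde\xi$, it holds for all timelike $\xi\in T_x\Sigma$.

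Finally I would extend the identity to all of $T_x\Sigma$. Both $\xi\mapsto(\xi,\xi)_g$ and $\xi\mapsto(\Phi_{\ast x}\xi,\Phi_{\ast x}\xi)_{\tilde g}$ are quadratic forms on $T_x\Sigma$ (the latter because $\Phi_{\ast x}$ is linear), and they agree on the nonempty open timelike cone. A quadratic form is determined by its restriction to any open set, so the two forms coincide on all of $T_x\Sigma$; polarizing gives $g(\xi,\eta)=\tilde g(\Phi_{\ast x}\xi,\Phi_{\ast x}\eta)$ for all $\xi,\eta\in T_x\Sigma$, i.e.\ $g=\Phi^\ast\tilde g$ on $T_x\Sigma$. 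The main obstacle is precisely the sign determination in the timelike step: Lemma \ref{measurement}(iii) controls only $|\cdot|_g$, so one must independently argue, from the preservation of the chronological relation under $\Phi$, that $\tilde\xi$ is genuinely timelike before concluding equality of the \emph{signed} quadratic forms.
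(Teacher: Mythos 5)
Your proof is correct and follows essentially the same route as the paper: apply Lemma \ref{measurement}(iii) to a curve in $\Sigma$ with timelike initial velocity, transfer the limit via $d(x,c(s))=\tilde d(\tilde x,\tilde c(s))$, conclude the two quadratic forms agree on the open timelike cone, and finish by polynomial rigidity and polarization. The one place you go beyond the paper is the explicit sign argument (showing $\tilde\xi$ is causal as a limit of the future-pointing timelike vectors $\exp^{-1}_{\tilde x}(\tilde c(s))/s$, then excluding the null case), a point the paper's proof uses implicitly when it writes $-(\tilde\xi_0,\tilde\xi_0)_{\tilde g}$ for the squared limit; this is a worthwhile clarification rather than a different method.
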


\begin{proof}

For any fixed $x\in\Sigma$, since $\Sigma$ is a timelike submanifold, we can find a future-pointing timelike vector $\xi_{0}\in T_{x}\Sigma$. Let $c:[0,\epsilon)\rightarrow\Sigma\cap I^{+}_{\mathcal{U}}(x)$ be a smooth curve with $c(0)=x$ and $\dot{c}(0)=\xi_{0}$. By the assumption of Theorem \ref{thm_local}, $\tilde{d}(\tilde{x},\tilde{c}(s))=d(x,c(s))>0$, we conclude $\tilde{c}(s)\in\tilde{I}^{+}_{\widetilde{\mathcal{U}}}(\tilde{x})$. Hence $\tilde{c}:[0,\epsilon)\rightarrow\widetilde{\Sigma}\cap\tilde{I}^{+}_{\widetilde{\mathcal{U}}}(\tilde{x})$ is a smooth curve with $\tilde{c}(0)=\tilde{x}$ and $\dot{\tilde{c}}(0)=\tilde{\xi}_{0}$. By Lemma \ref{measurement}(iii)
$$-(\tilde{\xi}_{0},\tilde{\xi}_{0})_{\tilde{g}}=\Big(\displaystyle\lim_{s\rightarrow 0+}\frac{\tilde{d}^{+}_{\tilde{x}}(\tilde{c}(s))}{s}\Big)^{2}=\Big(\displaystyle\lim_{s\rightarrow 0+}\frac{d^{+}_{x}(c(s))}{s}\Big)^{2}=-(\xi_{0},\xi_{0})_{g}.$$
Since this identity is true for all future-pointing timelike vectors in $T_{x}\Sigma$, we conclude that the two quadratic forms $(\Phi_\ast \cdot, \Phi_\ast \cdot)_{\tilde{g}}$ and $(\cdot,\cdot)_g$ coincide on the open set of timelike vectors, hence they must be equal everywhere.
\end{proof}

We introduce a local coordinate system which is an analogue of the semi-geodesic coordinates in Riemannian geometry. As the hypersurface $\Sigma$ is an $n$-dimensional manifold, near the fixed point $\hat{x_0}\in\Sigma$ we can find a coordinate chart $(W,(x^{1},\dots,x^{n}))$ such that $W$ is a neighborhood of $\hat{x_0}$ in $\Sigma$ and the closure $\overline{W}$ is compact in $\Sigma$. Let $\nu$ be the unit normal vector field on $\Sigma$, chosen as in the hypothesis \textbf{H}. For small $\delta>0$, we can define a diffeomorphism using $W$ as follows:
\begin{equation}\label{Psi}
\Psi(x,r):=\exp_{x}(r\nu(x)) \quad\quad (x,r)\in W\times (-\delta,\delta).
\end{equation}
Geometrically $\Psi^{-1}$ parameterizes a tubular neighborhood of $W$ in $\mathcal{U}$. Similarly we can define
$$\tilde{\Psi}(\tilde{x},r):=\exp_{\tilde{x}}(r\tilde{\nu}(\tilde{x})) \quad\quad (\tilde{x},r)\in\widetilde{W}\times (-\delta,\delta)$$
where $\tilde{\nu}$ is the normal vector field to $\widetilde{\Sigma}$ chosen as in the hypothesis \textbf{H}, $\tilde{x}=\Phi(x)$, and $\widetilde{W}:=\Phi(W)$ is the image of $W$ under the diffeomorphism $\Phi$.
Let $id:(-\delta,\delta)\rightarrow (-\delta,\delta)$ be the identity map. By identifying $x\in W$ with $(x,0)\in W\times(-\delta, \delta)$, the diffeomorphism
\begin{equation}\label{diffeo}
\widetilde{\Psi}\circ(\Phi\times id)\circ\Psi^{-1}:W\times(-\delta,\delta)\rightarrow \tilde{W}\times(-\delta,\delta),
\end{equation}
is precisely $\Phi|_{W}$ when restricted to $W\times\{0\}$. In other words, the map \eqref{diffeo} extends $\Phi|_{W}$ to a diffeomorphism which identifies the tubular neighborhood $\Psi(W\times (-\delta,\delta))$ with the tubular neighborhood $\tilde{\Psi}(\widetilde{W}\times (-\delta,\delta))$. We will continue using a $\sim$ to indicate that the quantities are related via this diffeomorphism.

Using the coordinates on $W$, $(x^{1},\dots,x^{n},r)$ constitute coordinates in the tubular neighborhood $\Psi(W\times(-\delta,\delta))$. Similarly $(\tilde{x}^{1},\dots,\tilde{x}^{n},r)$ form local coordinates for the tubular neighborhood $\tilde{\Psi}(\widetilde{W}\times(-\delta,\delta))$. In these coordinates, the metrics $g$ and $\tilde{g}$ can be expressed as
\begin{equation}\label{metric}
\begin{array}{rl}\vspace{1ex}
g= & \displaystyle\sum^{n}_{i,j=1}g_{ij}(x,r)dx^{i}dx^{j}+dr^{2},\\ \vspace{1ex}
\tilde{g}= & \displaystyle\sum^{n}_{i,j=1}\tilde{g}_{ij}(\tilde{x},r)d\tilde{x}^{i}d\tilde{x}^{j}+dr^{2}.\\
\end{array}
\end{equation}

Now we are ready to prove our first main theorem. Roughly speaking, we shoot some timelike geodesics from near $\hat{x}_0$, which by the timelike convexity assumption will intersect $\overline{\Sigma}$. For those long geodesics, we adapt the proof of \cite[Theorem 1]{SU} which is in the context of Riemannian geometry. For the short geodesics, we follow the idea of the proof of \cite[Theorem 2.1]{LSU} in the Riemannian setting to write distances as integrals over geodesics.

%The proof closely follows that of \cite[Theorem 1]{SU}, where the authors developed such strategy to determine the jet of a Riemannian metric on the boundary from local lens data.
\medskip

\begin{proof}[Proof of Theorem \ref{thm_local}]
Using the coordinates in \eqref{metric}, we only need to determine $C^{\infty}$-jet of each component $g_{ij}$ at $\hat{x_0}$. As a conclusion of Proposition \ref{boundary}, the functions $g_{ij}$ are uniquely determined on $\{(x,r):r=0\}$, from this we can find all tangential derivatives of $g_{ij}$ at $\hat{x_0}$. Next we will show that $d|_{\Sigma\times\Sigma}$ uniquely determines the normal derivatives $\frac{\partial^{k}g_{ij}}{\partial r^{k}}(\hat{x_0})$ for $k=1,2,\dots$; that is, we will show
\begin{equation}\label{normal}
\displaystyle\frac{\partial^{k} g_{ij}}{\partial r^{k}}(\hat{x_0})=\frac{\partial^{k}\tilde{g}_{ij}}{\partial r^{k}}(\Phi(\hat{x_0})) \quad\quad\quad  k=1,2,\dots.
\end{equation}
We remark that in the following proof of \eqref{normal}, the key information of $g$ that is used is the knowledge of its tangential derivatives on $\{(x,r): r=0\}$, thus the proof is also valid if $g_{ij}$ in \eqref{normal} is replaced by any of its tangential derivative $\frac{\partial^{s_1+ \dots +s_n} g_{ij}}{(\partial x^1)^{s_1} \dots (\partial x^n)^{s_n}}$, and in this way we can determine any mixed derivative of the form $\frac{\partial^{s_1+ \dots +s_n+s_{n+1}} g_{ij}}{(\partial x^1)^{s_1} \dots (\partial x^n)^{s_n} (\partial r)^{s_{n+1}}}$.

In order to make the proof of \eqref{normal} clear, we divide it into two steps.

\emph{Step 1:} Let us start with the case when $k=1$. We will employ two types of argument alternately: one is constructive, that is, we give explicit procedures on how to recover quantities related to the metric $g$ from the measurement function $d|_{\Sigma\times\Sigma}$; the other is non-constructive: we show that some quantities, which are related to $g$ and $\tilde{g}$ respectively, are identical under the assumption that $d(x,y)=\tilde{d}(\tilde{x},\tilde{y})$ for all $x,y\in\Sigma$.

For the metric $g$, let $(x_0,\xi_0)\in\hat{U}$ with $\xi_0$ a past-pointing timelike unit vector, and let $\nu$ be the unit normal vector field as in the hypothesis \textbf{H}. Define a sequence of vectors $\xi_{l}\in T_{x_{0}}\Sigma$ by $$\xi_{l}:=\xi_{0}+\frac{1}{l}\nu(x_{0})=\displaystyle\sum^{n}_{i=1}\xi^{i}_{0}\left.\frac{\partial}{\partial x^{i}}\right|_{x_{0}}+\frac{1}{l}\left.\frac{\partial}{\partial r}\right|_{x_{0}}.$$
Here the positive integer $l$ is chosen to be sufficiently large so that $\xi_{l}$ is also a timelike past-pointing vector. Let $\gamma_{l}$ be the unique geodesic issued from $x_{0}$ in the direction $\xi_{l}$. By the hypothesis \textbf{H}, $\gamma_{l}$ will intersect $\overline \Sigma$
%\HOX{Added the closure here. -Lauri}
for some $t_{l}>0$, without loss of generality we may assume that $t_{l}$ is the smallest parameter value such that the intersection happens. The corresponding encounter point $y_{l}:=\exp_{x_{0}}(t_{l}\xi_{l})$ will be different from $x_{0}$ since $\mathcal{U}$ is simply convex (see pictures below). As $\overline{\Sigma}$ is compact, $\{t_{l}\}$ is a bounded sequence, hence has a convergent subsequence which we assume to be itself and write $t_{l}\rightarrow t_{0}\in\mathbb{R}$ and $y_{l}\rightarrow y_{0}\in\overline{\Sigma}$ as $l\rightarrow\infty$. Based on $\{y_{l}\}$ and $y_{0}$, we can define functions $d^{+}_{y_{l}},l=1,2,\dots$ and $d^{+}_{y_{0}}$. Notice that although $y_{l}\neq x_{0}$ for each $l$, it is possible that $y_{0}=x_{0}$. In the following we consider two cases: $y_{0}\neq x_{0}$ and $y_{0}=x_{0}$. For the case $y_{0}\neq x_{0}$ we employ a constructive agrument, while for the case $y_{0}=x_{0}$ we prove the uniqueness in a non-constructive way. We write them as two claims.\\

%\HOX{A picture is added here for the case $y_0\neq x_0$. -Yang}
%\HOX{\emph{Matti suggests adding a figure here. -Yang}}
\emph{Claim 1: If $y_{0}\neq x_{0}$, we can uniquely determine $\sum^{n}_{i,j=1}\frac{\partial g^{ij}}{\partial r}(x_{0})\xi^{i}_{0}\xi^{j}_{0}$.}\vspace{2ex}

\begin{figure}[h]
\begin{center}
\includegraphics[height=0.2\textheight]{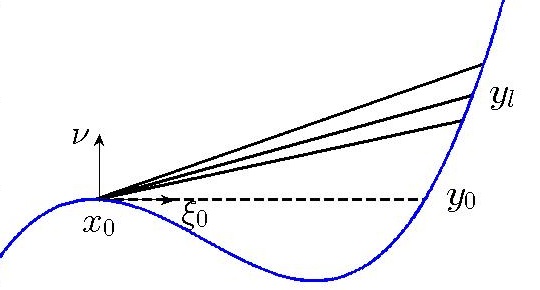}
\end{center}
\caption{The case where $y_0\neq x_0$.} 
\end{figure}

To prove Claim 1, first notice that in this case $y_{0}=\exp_{x_{0}}(t_{0}\xi_{0})\in I^{-}_{\mathcal{U}}(x_{0})$, which means $y_0\ll x_0$. By Lemma \ref{measurement}(ii), $d^{+}_{y_{l}}$ is smooth in a neighborhood, say $U_l\subset\mathcal{U}$, of $x_0$ when $l\geq l_0$ for some large integer $l_{0}>0$. For such an $l$, running the geodesic $\gamma_{l}$ backwards from $y_l$ to $x$ and applying Lemma \ref{measurement}(iv) yields

%\HOX{It is possible to choose a neighborhood $U$ independent of $l$ here. -Lauri}

\begin{equation}\label{gradient}
\textrm{grad}\;d^{+}_{y_{l}}(x_{0})=\frac{\xi_{l}}{|\xi_{l}|_{g}}.
\end{equation}
Also Lemma \ref{measurement}(v) states that the eikonal equation
$$(\textrm{grad}\;d^{+}_{y_{l}}(x),\textrm{grad}\;d^{+}_{y_{l}}(x))_{g}=-1$$
holds in $U_l$. We write this equation with the coordinates $(x^{1},\dots,x^{n},r)$:
\begin{equation}\label{eik1}
\displaystyle\sum^{n}_{i,j=1}g^{ij}\frac{\partial d^{+}_{y_{l}}}{\partial x^{i}}\frac{\partial d^{+}_{y_{l}}}{\partial x^{j}}+\left(\frac{\partial d^{+}_{y_{l}}}{\partial r}\right)^{2}=-1, \quad\quad\quad l\geq l_{0}.
\end{equation}
In particular, this relation holds on $U_{l}\cap\Sigma$. Since the function $d|_{\Sigma\times\Sigma}$ is assumed to be known, we actually know the function $d^{+}_{y_{l}}$ on $\Sigma$ (even if $y_l\in\overline{\Sigma}$ may not lie in $\Sigma$, see the remark after Lemma \ref{measurement}). This fact, together with Proposition \ref{boundary}, implies that all the tangential derivatives of $g^{ij}$ and $d^{+}_{y_{l}}$ are known on $U_{l}\cap\Sigma$. Thus we can solve for $\frac{\partial d^{+}_{y_{l}}}{\partial r}$ in \eqref{eik1} by taking a square root. The sign of the square root can be determined in the following approach. Observing that in the coordinates $(x^{1},\dots,x^{n},r)$ the last component of $\textrm{grad}\;d^{+}_{y_{l}}(x_{0})$ is $\frac{\partial d^{+}_{y_{l}}}{\partial r}(x_{0})$, and the last component of $\frac{\xi_{l}}{|\xi_{l}|_{g}}$ is $\frac{1}{l|\xi_{l}|_{g}}$ by the definition of $\xi_l$, we derive from \eqref{gradient} that
\begin{equation}\label{last_component}
\frac{\partial d^{+}_{y_{l}}}{\partial r}(x_{0})=\frac{1}{l|\xi_{l}|_{g}},
\end{equation}
which is positive. Therefore, from \eqref{eik1} we can recover $\frac{\partial d^{+}_{y_{l}}}{\partial r}$ in a neighborhood of $x_{0}$ in $U_{l}\cap\Sigma$ by taking the positive square root. Shrinking $U_{l}$ if necessary, we may assume that this neighborhood is $U_{l}\cap\Sigma$ itself.

%As we know values of $\frac{\partial d^{-}_{y_{l}}}{\partial r}$ everywhere on $U\cap\Sigma_{\epsilon_{1},\epsilon_{2}}$, we can find all the tangential derivatives of $\frac{\partial d^{-}_{y_{l}}}{\partial r}$ everywhere on $U\cap\Sigma_{\epsilon_{1},\epsilon_{2}}$.

Differentiating \eqref{eik1} with respect to a tangential direction, say $x^{m}$, we get
\begin{equation}\label{eik2}
\displaystyle\sum^{n}_{i,j=1}\left(\frac{\partial g^{ij}}{\partial x^{m}}\frac{\partial d^{+}_{y_{l}}}{\partial x^{i}}\frac{\partial d^{+}_{y_{l}}}{\partial x^{j}}+2g^{ij}\frac{\partial^{2} d^{+}_{y_{l}}}{\partial x^{i}\partial x^{m}}\frac{\partial d^{+}_{y_{l}}}{\partial x^{j}}\right)+2\frac{\partial d^{+}_{y_{l}}}{\partial r}\frac{\partial^{2} d^{+}_{y_{l}}}{\partial x^{m}\partial r}=0 \quad\quad l\geq l_{0}.
\end{equation}
From this identity we can recover $\frac{\partial^{2} d^{+}_{y_{l}}}{\partial x^{m}\partial r}$ on $U_{l}\cap\Sigma$, and similarly up to all order the tangential derivatives of $\frac{\partial d^{+}_{y_{l}}}{\partial r}$ can be recovered on $U_{l}\cap\Sigma$.

On the other hand, differentiating \eqref{eik1} with respect to $r$ we get
\begin{equation}\label{eik3}
\displaystyle\sum^{n}_{i,j=1}\left(\frac{\partial g^{ij}}{\partial r}\frac{\partial d^{+}_{y_{l}}}{\partial x^{i}}\frac{\partial d^{+}_{y_{l}}}{\partial x^{j}}+2g^{ij}\frac{\partial^{2} d^{+}_{y_{l}}}{\partial x^{i}\partial r}\frac{\partial d^{+}_{y_{l}}}{\partial x^{j}}\right)+2\frac{\partial d^{+}_{y_{l}}}{\partial r}\frac{\partial^{2} d^{+}_{y_{l}}}{\partial r^{2}}=0 \quad\quad l\geq l_{0}.
\end{equation}
Evaluating this at $x=x_{0}$ and taking \eqref{last_component} into consideration we obtain
$$
\displaystyle\sum^{n}_{i,j=1}\frac{\partial g_{ij}}{\partial r}(x_{0})\frac{\xi^{i}_{0}}{|\xi_{l}|_{g}}\frac{\xi^{j}_{0}}{|\xi_{l}|_{g}}=-\left.\left(2\sum^{n}_{i,j=1}
g^{ij}\frac{\partial^{2} d^{+}_{y_{l}}}{\partial x^{i}\partial r}\frac{\partial d^{+}_{y_{l}}}{\partial x^{j}}+2\frac{\partial^{2} d^{+}_{y_{l}}}{\partial r^{2}}\frac{1}{l|\xi_{l}|_{g}}\right)\right|_{x=x_{0}}.
$$
As $y_{0}\neq x_{0}$, $\frac{\partial^{2} d^{+}_{y_{l}}}{\partial r^{2}}(x_{0})$ remains bounded as $l\rightarrow\infty$, so the second term on the right-hand side tends to zero as $l\rightarrow\infty$, and we recover $\sum^{n}_{i,j=1}\frac{\partial g_{ij}}{\partial r}(x_{0})\xi^{i}_{0}\xi^{j}_{0}$. This completes the proof of Claim 1.\\

%\HOX{A picture is added here for the case $y_0=x_0$. -Yang}
\emph{Claim $2$: if $y_{0}=x_{0}$, then $\sum^{n}_{i,j=1}\frac{\partial g_{ij}}{\partial r}(x_{0})\xi^{i}_{0}\xi^{j}_{0}=\sum^{n}_{i,j=1}\frac{\partial \tilde{g}_{ij}}{\partial r}(\tilde{x}_{0})\tilde{\xi}^{i}_{0}\tilde{\xi}^{j}_{0}$.}\vspace{2ex}

\begin{figure}[h]
\begin{center}
\includegraphics[width=0.8\linewidth]{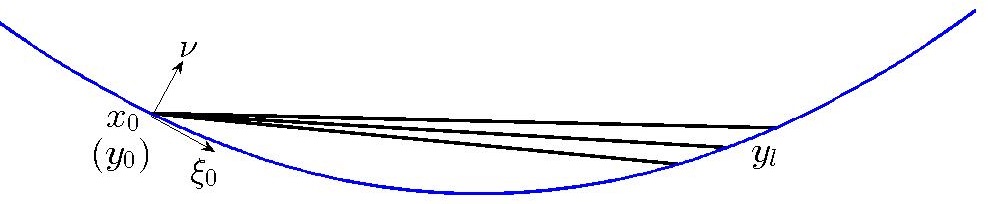}
\end{center}
\caption{The case where $y_0=x_0$.} 
\end{figure}

Notice that if $y_{0}=x_{0}$, then $y_{l}\rightarrow x_{0}$ as $l\rightarrow\infty$, so for large $l$, $\gamma_{l}$ will lie in $W\times(-\delta,\delta)$. We fix such an $l$ and pull $\tilde{g}$ back from the tubular neighborhood $\tilde{\Psi}(\widetilde{W}\times(-\delta,\delta))$ onto the tubular neighborhood $\Psi(W\times (-\delta,\delta))$ via the diffeomorphism $\eqref{diffeo}$, the pullback metric, say $g'$, has the expression
$$g'(x,r)=\sum^{n}_{i,j=1}g'_{ij}(x,r)dx^{i}dx^{j}+dr^{2}.$$
Correspondingly let $d'$ be the pullback of $\tilde{d}$ via \eqref{diffeo} onto $\Psi(W\times (-\delta,\delta))$, then we have $d(x,y)=d'(x,y)$ for all $x,y\in W$. Define $f_{ij}=g_{ij}-g'_{ij}$, in the following we will show
\begin{equation}\label{vanish}
\sum^{n}_{i,j=1}\frac{\partial f_{ij}}{\partial r}(x_{0},0)\xi^{i}_{0}\xi^{j}_{0}=0,
\end{equation}
from which Claim 2 follows. Here we have identified $x_{0}\in W$ with $(x_{0},0)\in W\times (-\delta,\delta)$.

%
%\begin{equation}\label{equal}
%\sum^{n}_{i,j=1}\frac{\partial^{k} g_{ij}}{\partial r^{k}}(x_{0},0)\xi^{i}_{0}\xi^{j}_{0}=\sum^{n}_{i,j=1}\frac{\partial^{k} g'_{ij}}{\partial r^{k}}(x_{0},0)\xi^{i}_{0}\xi^{j}_{0} \quad\quad k=0,1,2,\cdots,
%\end{equation}
%under the assumption that . From \eqref{equal}, Claim $2$ follows by setting $k=1$.   then \eqref{equal} is equivalent to
%

Now we prove \eqref{vanish} by a contrapositive argument. Suppose it is not true, without loss of generality we may assume
$$\sum^{n}_{i,j=1}\frac{\partial f_{ij}}{\partial r}(x_{0},0)\xi^{i}_{0}\xi^{j}_{0}>0.$$
By continuity, there exists a conic neighborhood $V$ of $((x_{0},0);\xi_{0})$ in the tangent bundle $T\mathcal{U}$ so that
$$\sum^{n}_{i,j=1}\frac{\partial f_{ij}}{\partial r}(x,r)\xi^{i}\xi^{j}>0$$
for all $((x,r);\xi)\in V$. As $f_{ij}(x,0)=0$ by Proposition \ref{boundary}, developing $f_{ij}$ in Taylor's expansion we obtain
$$f_{ij}(x,r)=\displaystyle\frac{\partial f_{ij}}{\partial r}(x,0)r+\mathcal{O}(r^{2}),$$
thus
\begin{equation}\label{positive}
\sum^{n}_{i,j=1}f_{ij}(x,r)\xi^{i}\xi^{j}>0
\end{equation}
for all $((x,r);\xi)\in V$ with $r>0$. By choosing $l$ sufficiently large, we may assume $\{(\gamma_{l}(t);\dot{\gamma}_{l}(t)):0\leq t\leq t_{l}\}$ is contained in $V$ so that \eqref{positive} is valid. Since $\xi_{0}\in T_{x_{0}}\Sigma$ is timelike with respect to $g$, $\xi_{l}\in T_{x_{0}}\Sigma$ is close to $\xi_{0}$, and $g=g'$ on $T_{x_{0}}\Sigma$, we see that $\gamma_{l}$ is a timelike curve with respect to $g'$ for large $l$. (Notice that by definition $\gamma_{l}$ is a timelike geodesic with respect to $g$. The argument here shows that $\gamma_{l}$ is also a timelike curve for $g'$, but not necessarily a timelike geodesic.) We assume the fixed $l$ is chosen to be so large that $\gamma_{l}$ is indeed a timelike curve with respect to $g'$. Therefore, for this timelike curve, we can find a strictly increasing smooth parametrization $\iota:[0,\ell]\rightarrow [0,t_{l}]$ such that the reparametrized curve
$$\Gamma:=\gamma_{l}\circ\iota:[0,\ell]\rightarrow\mathcal{U}$$
satisfies $(\dot{\Gamma}(t),\dot{\Gamma}(t))_{g'}=-1$ for all $t$. It follows from \eqref{positive} that
\begin{equation}\label{contradiction1}
\begin{array}{rl}\vspace{1ex}
& \displaystyle\int^{\ell}_{0}(\dot{\Gamma}(t),\dot{\Gamma}(t))_{g}\,dt+\ell \\ \vspace{1ex}
=& \displaystyle\int^{\ell}_{0}\sum^{n}_{i,j=1}g_{ij}(\Gamma(t))\dot{\Gamma}^{i}(t)\dot{\Gamma}^{j}(t)\,dt-
\displaystyle\int^{\ell}_{0}\sum^{n}_{i,j=1}g'_{ij}(\Gamma(t))\dot{\Gamma}^{i}(t)\dot{\Gamma}^{j}(t)\,dt\\ \vspace{1ex}
=& \displaystyle\int^{\ell}_{0}\sum^{n}_{i,j=1}f_{ij}(\Gamma(t))\dot{\Gamma}^{i}(t)\dot{\Gamma}^{j}(t)\,dt > 0.
%\\ \vspace{1ex}
%>& 0.\\
\end{array}
\end{equation}
On the other hand, let $\gamma'_{l}$ be the pullback via \eqref{diffeo} of the unique radial geodesic in $\widetilde{\mathcal{U}}$ joining $\tilde{y}_{l}$ and $\tilde{x}_{0}$, hence $\gamma'_{l}$ is, with respect to $g'$, the longest timelike curve joining $y_{l}$ and $x_{0}$ in $\Psi(W\times (-\delta,\delta))$. Therefore, we conclude
$$\ell\leq d'(y_{l},x_{0})=d(y_{l},x_{0})=L(\Gamma).$$
By Cauchy-Schwarz inequality
\begin{equation}\label{contradiction2}
\ell^{2}\leq L^{2}(\Gamma)=\left(\displaystyle\int^{\ell}_{0}|\dot{\Gamma}(t)|^{\frac{1}{2}}_{g}\right)^{2}\,dt
\leq  -\ell\displaystyle\int^{\ell}_{0}(\dot{\Gamma}(t),\dot{\Gamma}(t))_{g}\,dt
\end{equation}
since $(\dot{\Gamma}(t),\dot{\Gamma}(t))_{g}<0$ for all $t$. From \eqref{contradiction2} we derive
$$\ell+\displaystyle\int^{\ell}_{0}(\dot{\Gamma}(t),\dot{\Gamma}(t))_{g}\,dt\leq 0,$$
which contradicts \eqref{contradiction1}. This completes the proof of identity \eqref{vanish}, hence Claim $2$.\\

Combining \emph{Claim 1} and \emph{Claim 2}, in either case $\sum^{n}_{i,j=1}\frac{\partial g_{ij}}{\partial r}(x_{0})\xi^{i}_{0}\xi^{j}_{0}$ is uniquely determined. To recover $\frac{\partial g_{ij}}{\partial r}(x_{0})$, we need to perturb $\xi_{0}$: for any $\xi\in T_{x_{0}}\Sigma$ which is sufficiently close to $\xi_{0}$, we run the above arguments to recover $\sum^{n}_{i,j=1}\frac{\partial g_{ij}}{\partial r}(x_{0})\xi^{i}\xi^{j}$, these values are enough to determine the matrix $(\frac{\partial g_{ij}}{\partial r}(x_{0}))_{1\leq i,j\leq n}$, hence we obtain $\frac{\partial g_{ij}}{\partial r}(x_{0})$. In particular, evaluating at $x_{0}=\hat{x_{0}}$ completes the proof of \eqref{normal} for $k=1$.\\

\emph{Step 2:} In this step, we prove \eqref{normal} for $k\geq 2$. This is an inductive argument. However, to make the idea clear, we state the proof only for $k=2$. It should be obvious how this inductive process is done for any $k\geq 3$.

If $y_{0}\neq x_{0}$, differentiate \eqref{eik3} with respect to $r$ and evaluate at $x=x_{0}$. Since we have known $\frac{\partial g_{ij}}{\partial r}$ (note that in Step 1 we actually found $\frac{\partial g_{ij}}{\partial r}(x_{0})$ for all $x_{0}$ near $\hat{x_{0}}$, not just $\frac{\partial g_{ij}}{\partial r}(\hat{x_{0}})$), from \eqref{eik3} we can recover $\frac{\partial^{2} d^{+}_{y_{l}}}{\partial r^{2}}$ as well as all its tangential derivatives on $U_{l}\cap\Sigma$. The only unknown term will be $\frac{\partial^{3} d^{+}_{y_{l}}}{\partial r^{3}}(x_{0})$, but it will be multiplied by $\frac{\partial d^{+}_{y_{l}}}{\partial r}(x_{0})=\frac{1}{l|\xi_{l}|_{g}}$. Taking the limit $l\rightarrow\infty$ will recover $\sum^{n}_{i,j=1}\frac{\partial^{2} g_{ij}}{\partial r^{2}}(x_{0})\xi^{i}_{0}\xi^{j}_{0}$ as above.

If $y_{0}=x_{0}$, as in the proof of \emph{Claim 2} we assume $\sum^{n}_{i,j=1}\frac{\partial^{2} f_{ij}}{\partial r^{2}}(x_{0})\xi^{i}_{0}\xi^{j}_{0}\neq 0$, without loss of generality we may assume it is positive. Writing the Taylor expansion of $f_{ij}$ with respect to $r$ at $r=0$, by Proposition \ref{boundary} and the case $k=1$ in \eqref{normal}, we see that \eqref{positive} holds for $((x,r);\xi)$ in a conic neighborhood of $((x_{0},0);\xi_{0})$ with $r>0$. Similarly we get a contradiction as above.

In either case, we can uniquely determine $\sum^{n}_{i,j=1}\frac{\partial^{2} g_{ij}}{\partial r^{2}}(x_{0})\xi^{i}_{0}\xi^{j}_{0}$. Finally, by first varying $\xi_{0}$, and next varying $x_{0}$, and then evaluating at $x_{0}=\hat{x_{0}}$, we prove \eqref{normal} when $k=2$. Applying this construction inductively completes the proof of the theorem.
\end{proof}

\section{Global determination of the manifold}\label{sec_glob}

In this section we describe a procedure to obtain
a maximal real-analytic extensions of a real-analytic manifolds
that are
geodesically complete modulo scalar curvature singularities.

Let $(M,g)$ and $(\tilde M, \tilde g)$ be a smooth Lorentzian manifolds,
and let $\phi : U \to \tilde U$
be an isometry of an open set $U \subset M$ onto an open set $\tilde U \subset \tilde M$.
Let $\gamma : [0,\ell] \to M$ be a path starting from $U$, that is, $\gamma(0) \in U$.
Let $I$ be a connected neighborhood of zero in $[0,\ell]$.
We say that a family $\phi_t : U_t \to \tilde U_t$, $t \in I$,
is a continuation of $\phi$ along $\gamma$ if
\begin{itemize}
\item[(i)] $U_t \subset M$ and $\tilde U_t \subset \tilde M$ are open, $\gamma(t) \in U_t$ and $\phi_t$ is an isometry,
\item[(ii)] for all $s \in I$ there is $\epsilon > 0$ such that
$\phi_t = \phi_s$ in $U_t \cap U_s$ whenever $|t-s| < \epsilon$, and %if $U_t \cap U_s \ne 0$ then
\item[(iii)] $\phi_0 = \phi$ in $U_0 \cap U$.
\end{itemize}
We say that $\phi$ is extendable along $\gamma$ if there is a continuation of $\phi_t$, $t \in [0,\ell]$, along $\gamma$.

% We recall that if $\phi_t : U_t \to \tilde M$ and $\psi_t : V_t \to \tilde M$ are two continuations of $\phi$ along $\gamma$,
% then $\phi_t = \psi_t$ in the connected component of $U_t \cap V_t$ containing $\gamma(t)$.
% This is proved in \cite[p. 62]{Helgason1962} in the Riemannian case and the same proof carries over to the semi-Riemannian case.

We recall that a continuous path $\gamma : [0,\ell] \to M$ is a broken geodesic if there are $0 < l_0 < l_1 <  \dots <  l_N < \ell$
such that $\gamma$ is a geodesic on $[l_{j-1},l_j]$, $j=1,2,\dots,N$.

\begin{theorem}
\label{thm_glueing}
Suppose that $(M,g)$ and $(\tilde M, \tilde g)$ are connected. %and geodesically complete modulo scalar curvature singularities.
Let $\phi : U \to \tilde U$ be an isometry of an open set $U \subset M$ onto an open set $\tilde U \subset \tilde M$,
and suppose that $\phi$ is extendable along all broken geodesics $\gamma : [0,\ell] \to M$ starting from $U$.
Suppose, furthermore, that all broken geodesics $\gamma : [0,\ell) \to M$, $\ell \in (0,\infty)$,
starting from $U$ satisfy the following:
\begin{itemize}
\item[(L)] If $\phi_t$ is a continuation of $\phi$ along $\gamma$
and the limit $\lim_{t \to \ell} \phi_t(\gamma(t))$ exists, then the limit $\lim_{t \to \ell} \gamma(t)$ exists.
\end{itemize}
Then $(M,g)$ and $(\tilde M, \tilde g)$ have the same universal Lorentzian covering space.
\end{theorem}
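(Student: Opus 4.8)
The plan is to run a monodromy (analytic-continuation-of-isometries) argument to manufacture a local isometry between the two universal covers, and then to use hypothesis (L) to upgrade it to a global isometry. Throughout I would rely on the rigidity of isometries of smooth pseudo-Riemannian manifolds: a local isometry with connected domain is determined by its $1$-jet at a single point, since an isometry intertwines the Levi-Civita connections and hence satisfies $\phi\circ\exp_p=\exp_{\phi(p)}\circ\, d\phi_p$; consequently the set on which two isometries agree together with their first derivatives is open and closed, so it is all of the connected domain. This already yields \emph{local uniqueness} of continuations: if $\phi_t$ and $\phi'_t$ are two continuations of $\phi$ along the same broken geodesic $\gamma$, then $\{t : \phi_t=\phi'_t \text{ near } \gamma(t)\}$ is open and closed in $I$ and contains $0$, so the germ of $\phi_t$ at $\gamma(t)$ is unique.

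Next I would prove homotopy invariance of the continuation. For a broken geodesic $\gamma:[0,\ell]\to M$ starting from $U$, let $P(\gamma)$ denote the well-defined $1$-jet of the terminal isometry $\phi_\ell$ at $\gamma(\ell)$; the claim is that if $\gamma_0$ and $\gamma_1$ are homotopic rel endpoints then $P(\gamma_0)=P(\gamma_1)$. Using that every point of $M$ has convex neighborhoods, every path is homotopic rel endpoints to a broken geodesic and every homotopy can be realized through broken geodesics; I would then run the standard rectangle-subdivision monodromy argument, in which local uniqueness shows that the jet $P(\gamma_s)$ is locally constant in the homotopy parameter $s$, hence constant. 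Fixing a basepoint $x_0\in U$ and identifying the universal cover $\widehat M$ with homotopy classes of paths from $x_0$, the assignment $[\gamma]\mapsto \phi_\ell(\gamma(\ell))$, computed along a broken-geodesic representative, is then well defined and defines a local isometry $F:\widehat M\to \widetilde M$; lifting $F$ through the universal covering $\widetilde\pi:\widehat{\widetilde M}\to\widetilde M$, which is possible because $\widehat M$ is simply connected, produces a local isometry $\widehat F:\widehat M\to\widehat{\widetilde M}$.

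Finally I would invoke (L) to show that $\widehat F$ is a covering map. Given any geodesic $\tilde c$ of $\widehat{\widetilde M}$ issuing from a point $\widehat F(\hat p)$ of the image, define the source geodesic $c$ in $\widehat M$ with $c(0)=\hat p$ and $\dot c(0)=(d\widehat F_{\hat p})^{-1}\dot{\tilde c}(0)$, on its maximal interval $[0,\ell)$; since $\widehat F$ is a local isometry one has $\widehat F\circ c=\tilde c$. If $\ell$ were a finite interior parameter of $\tilde c$, then projecting, the image curve $\phi_t(\gamma(t))=\widetilde\pi(\tilde c(t))$ converges as $t\to\ell$, where $\gamma=\pi\circ c$; by (L) the projected source curve $\gamma$ then converges, and lifting through an evenly covered neighborhood of its limit shows $c$ itself converges, while the local diffeomorphism $\widehat F$ forces $\dot c$ to converge as well, so $c$ extends past $\ell$, contradicting maximality. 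Thus every geodesic of $\widehat{\widetilde M}$ lifts through $\widehat F$; this geodesic-lifting property makes $\widehat F$ a surjective covering map, and since $\widehat{\widetilde M}$ is simply connected and $\widehat M$ is connected, $\widehat F$ is one-sheeted, that is, an isometry of $\widehat M$ onto $\widehat{\widetilde M}$, exhibiting the common universal Lorentzian covering space.

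I expect the monodromy step to be the conceptual heart and the main obstacle: one must verify carefully that continuation can be carried through a homotopy and not merely along a single geodesic, and that the reduction of arbitrary homotopy classes to broken-geodesic representatives introduces no hidden multivaluedness. The role of (L) is then cleanly isolated — it is precisely the completeness input preventing a lifted geodesic in the source from terminating before its image does, which is exactly what promotes the local isometry $\widehat F$ to a covering.
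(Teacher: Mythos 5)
Your proposal is correct and reaches the same conclusion by a genuinely different route. The paper never proves homotopy invariance of continuation: it instead runs O'Neill's matched-covering construction \cite[p.~203]{O}, indexing germs of continuations by pairs $(\gamma,t)$, declaring $(\gamma,t)\sim(\mu,s)$ when the associated isometries agree on an overlap, and taking the quotient to obtain an intermediate space $X$ with local isometries $F:X\to M$ and $\tilde F:X\to\tilde M$; it shows both are covering maps by the geodesic-lifting criterion \cite[Th.~7.28]{O} (with (L) used exactly where you use it, to keep the $M$-side geodesic alive as long as its $\tilde M$-side image survives), and finally passes to the universal cover of $X$, which then covers both manifolds. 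You replace the quotient construction by the classical monodromy theorem: realizing $\widehat M$ as path classes, you get a well-defined local isometry $\widehat M\to\tilde M$, lift it to $\widehat F:\widehat M\to\widehat{\widetilde M}$, and use (L) plus geodesic lifting to make $\widehat F$ a covering, hence an isometry onto the simply connected target. Both arguments rest on the same two pillars --- rigidity of isometries (your open-and-closed $1$-jet argument is the paper's appeal to \cite[Lem.~3.62]{O}) and the geodesic-lifting criterion --- but they distribute the work differently: the matched covering sidesteps the rectangle-subdivision monodromy argument entirely (the step you rightly flag as the main obstacle, since the hypothesis furnishes continuations only along broken geodesics, so every intermediate path of a homotopy must be replaced by a polygonal approximation and compared via local uniqueness), whereas your route avoids the quotient formalism and its attendant lemmas (Lemmas \ref{lem_equivalence_in_U_a} and \ref{lem_lift} in the paper) and produces the isometry of universal covers explicitly rather than through a common cover of a common cover. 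One point to write out carefully in your final step: to invoke (L) you need the curve $t\mapsto\widetilde\pi(\tilde c(t))$ to equal $\phi_t(\gamma(t))$ for an honest continuation of $\phi$ along a broken geodesic starting from $U$; this means concatenating $\gamma=\pi\circ c$ with a broken geodesic $\rho$ from the basepoint in $U$ to $\gamma(0)$ and checking, via local uniqueness, that the terminal germs of the continuations along $\rho$ followed by $\gamma|_{[0,t]}$ assemble into a single continuation along the concatenation --- routine, but it is precisely the bookkeeping that the paper's two lemmas carry out inside its own framework.
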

% {\footnotesize
% We have (but don't probably need) that:
% There is a geodesically complete and connected Riemannian manifold $X$, Riemannian coverings
% $F : X \to M$ and $\tilde F : X \to \tilde M$ and a point $x \in X$ such that
% $F(x) = p$, $\tilde F(x) = \phi(p)$ and $d\phi\, dF = d\tilde F$ at $x$.
% Here $\phi : U \to \tilde M$ is the isometry and $p \in U$.
% }

Although the assumptions in the theorem may seem unsymmetric with respect to $M$ and $\tilde M$, in fact, they are not. The extendability up to the end point $\ell$ implies the condition (L) with the roles of $M$ and $\tilde M$ interchanged. We will also see below that if $(M,g)$ and $(\tilde M, \tilde g)$
are geodesically complete modulo scalar curvature singularities and real-analytic,
and if there is a local isometry $\phi : U \to \tilde U$
as above,
then $(M,g)$ and $(\tilde M, \tilde g)$ satisfy the assumptions of the theorem.

\begin{proof}
We begin by constructing a matched covering for $M$, see \cite[p. 203]{O} for the definition.
Let $p \in U$. % and let us denote the isometry by $\phi$.
We denote by $A$ the set of pairs $(\gamma, t)$ where $\gamma : [0, \ell] \to M$ is a broken geodesic satisfying $\gamma(0) = p$
and $t \in [0, \ell]$.
Let $(\gamma, t) \in A$.
Let us choose a continuation $\phi_t^\gamma : V_t^\gamma \to \tilde M$ of $\phi$ along $\gamma$.
The sets $\{V_t^\gamma: (\gamma, t) \in A\}$, form an open covering of $M$
since any two points of $M$ can be joined by a broken geodesic, see e.g. \cite[Lem. 3.32]{O}.
We may choose a smooth Riemannian metric tensor $g^+$ on $M$.
%\HOX{We could construct $g^+$ from $g$ by switching a sign but the choice of $g^+$ does not matter.}
We choose a neighborhood $U_t^\gamma \subset V_t^\gamma$ of $\gamma(t)$ such that $U_t^\gamma$ is convex with respect to $g^+$.
This is possible since there is a lower bound for the strong convexity radius on any compact set in $M$, see e.g. \cite[Th. IX.6.1]{Chavel2006}.
%see e.g. \cite[pp. 403-404]{Chavel2006}.
Then the intersections $U_t^\gamma \cap U_s^\mu$ are connected for all $(\gamma, t),\, (\mu, s) \in A$ whenever non-empty.

% {\footnotesize
% Indeed if $y,z \in U_t^\gamma \cap U_s^\mu$ then there is a unique minimizing geodesic in $M$ connecting $y$ to $z$
% and this geodesic is contained in the intersection.
% Notice that the notion of convexity in \cite[p. 403]{Chavel2006} is different from \cite[Def. 5.5]{O}.
% In fact, \cite[Def. 5.5]{O} implies weak convexity as defined in \cite[p. 403]{Chavel2006} but not convexity.
% Indeed, the sets $S^1 \setminus \{x\}$, $x \in S^1$, are convex in the sense of \cite[Def. 5.5]{O}
% but the intersection of two them is disconnected.

% Small enough geodesic balls are strongly convex in the Riemannian case. If the curvature of the manifold is bounded, then there is a global lower bound for the strong convexity radius
% \cite[Th. IX.6.1]{Chavel2006}. We can deduce from this that there is a strictly positive pointwise strong convexity radius:
% Let $y \in M$ and choose a neighborhood $U$ of $y$ such that any two points in $U$ can be joined by a geodesic with length less than one, see e.g. \cite[Th. I.6.1]{Chavel2006}.
% We may apply \cite[Th. IX.6.1]{Chavel2006} on $B(y,2)$ to get a strong convexity radius $r \in (0,1)$ at $y \in B(y,2)$ satisfying $B(y,r) \subset U$.
% This is also a convexity radius at $y \in M$. Indeed, if $\gamma$ is a minimizing geodesic in $M$ connecting two points $z,w \in B(y,r)$ and $\gamma$ is not the minimizer in $B(y,2)$
% then $\gamma$ exits $B(y,2)$. In particular, the length of $\gamma$ is at least two which is a contradiction with $\gamma$ being a minimizer.
% }

We define a relation $(\gamma, t) \sim (\mu, s)$ on $A$ by
$$
\text{$U_t^\gamma \cap U_s^\mu \ne \emptyset$ and $\phi_t^\gamma = \phi_s^\mu$ in $U_t^\gamma \cap U_s^\mu$}.
$$
Let $(\gamma, t) \sim (\mu, s)$, $(\mu, s) \sim (\beta, r)$ and $W := U_t^\gamma \cap U_s^\mu \cap U_r^\beta \ne \emptyset$.
Then there is $q \in W$ and $d \phi_t^\gamma = d\phi_r^\beta$ at $q$.
Thus $\phi_t^\gamma = \phi_r^\beta$ in the connected set $U_t^\gamma \cap U_r^\beta$ by \cite[Lem. 3.62]{O}.
That is $(\gamma, t) \sim (\beta, r)$. Hence
the open sets $U_t^\gamma$, $(\gamma, t) \in A$,
together with the relation $\sim$ give a matched covering of $M$.

To simplify the notation, we write for $a = (\gamma, t)$
$$
\mathcal U_a := U_t^\gamma, \quad \Phi_a := \phi_t^\gamma.
$$
Following \cite[p. 203]{O},
we define $\mathcal X = \{(y, a) \in M \times A:\ y \in \mathcal U_a \}$
and an equivalence relation $(y, a) \approx (z, b)$ on $X$ by
$$
\text{$y = z$ and $a \sim b$}.
$$
We let $X = \mathcal X / \approx$ and denote the equivalence classes by $[(y,a)]$.
We equip $A$ with the discrete topology, $\mathcal X$ with the product topology and $X$ with the quotient topology.
Moreover, we equip $X$ with the unique maximal manifold structure such that each
$$
\lambda_a : \mathcal U_a \to X, \quad \lambda_a(y) := [(y,a)], \quad a \in A,
$$
is a diffeomorphism onto a domain in $X$.
We set $F([(y,a)]) = y$ and get a local diffeomorphism $F : X \to M$ such that
$$
F = \lambda_a^{-1}, \quad \text{on $\lambda_a(\mathcal U_a)$}.
$$
We equip $X$ with the pullback metric $F^* g$. Then $F : X \to M$ is a local isometry.

The map $\tilde F([(y,a)]) := \Phi_a(y)$ is well defined. Indeed, if $(y,a) \approx (z,b)$ then $y = z \in \mathcal U_a \cap \mathcal U_b$ and
$\Phi_a(y) = \Phi_b(z)$. % and the continuity follows from the universal property of the quotient space.
Moreover,
$$
\tilde F([(y,a)]) = \Phi_a(y) = \Phi_a(F([(y,a)])),
$$
whence $\tilde F = \Phi_a \circ F$ on $\lambda_a(\mathcal U_a)$, and $\tilde F : X \to \tilde M$ is a local isometry.

%We let $a \in A$ be of the form $(\gamma, 0)$ and define $x := [(p,a)]$. Then $F(x) = p$ and $\tilde F(w) = \phi(F(w))$ for $w \in \lambda_a(\mathcal U_a)$.
%In particular, $\tilde F(x) = \phi(p)$ and $d \tilde F = d\phi\, d F$ at $x$.

To finish the proof of the theorem we will need to invoke the following two lemmas several times.
In the formulation of the lemmas we assume implicitly the facts that we have established so far in the proof.

\begin{lemma}
\label{lem_equivalence_in_U_a}
Let $a = (\gamma,t) \in A$, $[(y,a)] \in X$, and let us consider a broken geodesic
$\rho : [0,\ell] \to \mathcal U_a$ satisfying $\rho(0) = \gamma(t)$.
We denote by $\mu$ the concatenation of $\gamma$ and $\rho$.
Then $(\gamma,t) \sim (\mu,s)$ for all $s \in [t,t+\ell]$.
\end{lemma}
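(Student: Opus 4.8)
The plan is to show that the two continuations of $\phi$ that enter the definition of $\sim$, namely $\phi_t^\gamma$ along $\gamma$ and $\phi_s^\mu$ along $\mu$, are in fact one and the same isometry near $\mu(s)$, and then to spread this agreement over the whole connected overlap by rigidity. The tool I will use repeatedly is the uniqueness of continuation along a \emph{fixed} path: if two families continue the same germ of isometry along one and the same broken geodesic, then their members agree near each point of that geodesic. This is standard and follows from the rigidity of Lorentzian isometries \cite[Lem. 3.62]{O} together with the local-consistency property (ii), by a connectedness argument in the curve parameter (the set of parameters at which the two members agree on a neighborhood of the curve point is nonempty, open and closed). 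Note first that, since $\mu(0) = \gamma(0) = p$ and $\mu$ is a broken geodesic, $(\mu,s) \in A$ for all $s \in [0,t+\ell]$, so $\phi_s^\mu$ and $U_s^\mu$ are indeed defined.

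I would first treat the common initial segment. Because $\rho(0) = \gamma(t)$, the concatenation $\mu$ satisfies $\mu|_{[0,t]} = \gamma|_{[0,t]}$ and $\mu(t+v) = \rho(v)$ for $v \in [0,\ell]$; hence both the chosen continuation $\{\phi_u^\gamma\}$ along $\gamma$ and the chosen continuation $\{\phi_u^\mu\}$ along $\mu$, restricted to $u \in [0,t]$, are continuations of $\phi$ along the same path $\gamma|_{[0,t]}$. By uniqueness they coincide member by member; in particular $\phi_t^\mu = \phi_t^\gamma$ on a neighborhood of $\gamma(t)$. Next I would propagate this equality along $\rho$, exploiting the hypothesis $\rho([0,\ell]) \subset \mathcal U_a = U_t^\gamma$. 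On the segment $s \in [t,t+\ell]$ the curve $\mu$ is exactly $\rho$, and I compare two continuations of the germ $\phi_t^\mu$ along $\rho$: the restriction $\{\phi_s^\mu\}_{s\in[t,t+\ell]}$, and the \emph{constant} family given by the single isometry $\phi_t^\gamma$ on $U_t^\gamma$ at every parameter. The constant family is a legitimate continuation because $\phi_t^\gamma$ is an isometry defined on all of $U_t^\gamma \supset \rho([0,\ell])$, so conditions (i) and (ii) hold trivially, while (iii) is precisely the junction identity $\phi_t^\mu = \phi_t^\gamma$ near $\gamma(t) = \rho(0)$ just established. Uniqueness then yields $\phi_s^\mu = \phi_t^\gamma$ near $\mu(s)$ for every $s \in [t,t+\ell]$.

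It remains to package this as $(\gamma,t) \sim (\mu,s)$. The point $\mu(s) = \rho(s-t)$ lies in $U_t^\gamma$ by hypothesis and in $U_s^\mu$ by construction, so the overlap $U_t^\gamma \cap U_s^\mu$ is nonempty, and it is connected by the choice of the convex neighborhoods. Since $\phi_t^\gamma$ and $\phi_s^\mu$ agree on a neighborhood of the point $\mu(s)$ of this overlap, \cite[Lem. 3.62]{O} forces them to agree on the entire connected overlap, which is exactly the relation $(\gamma,t)\sim(\mu,s)$. I expect the only delicate point to be the uniqueness-of-continuation step and, within it, the constant-continuation trick: one must check that $\rho$ staying inside $\mathcal U_a$ genuinely allows the fixed isometry $\phi_t^\gamma$ to serve as a continuation along $\rho$, and one must invoke rigidity on honestly connected overlaps (shrinking to small $g^+$-balls where necessary) in order to run the open-closed argument underlying uniqueness.
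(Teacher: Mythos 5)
Your proof is correct, and it rests on the same two pillars as the paper's: the rigidity lemma \cite[Lem. 3.62]{O} applied on connected (convex) overlaps, and an open--closed connectedness argument in the curve parameter, with the hypothesis $\rho([0,\ell]) \subset \mathcal U_a$ entering in the same essential way. The organization, however, is genuinely different. The paper runs a single open--closed argument directly on the set $I = \{s \in [t,t+\ell] : (\gamma,t) \sim (\mu,s)\}$: closedness comes from characterizing $\sim$ by first-order agreement at $\mu(s)$ together with continuity of $s \mapsto \phi_s^\mu(\mu(s))$ and $s \mapsto d\phi_s^\mu|_{\mu(s)}$, and openness from property (ii) combined with the transitivity of $\sim$ on triple overlaps established just before the lemma; the base case $t \in I$ is simply asserted. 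You instead factor everything through a reusable uniqueness-of-continuation principle and apply it twice: on $[0,t]$ to get the junction identity $\phi_t^\mu = \phi_t^\gamma$ near $\gamma(t)$, and on $[t,t+\ell]$ via the constant-family trick, which is legitimate precisely because $\rho$ stays inside $\mathcal U_a$. This buys two things: your argument never needs the triple-overlap transitivity, and, more importantly, your first step supplies the justification for exactly the point the paper glosses over --- the continuations $\phi^\gamma$ and $\phi^\mu$ are chosen independently for the two broken geodesics, so their agreement at the common parameter $t$ (the paper's ``$t \in I$'') is not automatic but is precisely uniqueness of continuation along the shared initial segment. The paper's proof is shorter given the machinery already in place; yours is more self-contained and complete on the base case.
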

\begin{proof}
Notice that the set
$$
I = \{s \in [t,t+\ell];\ (\gamma,t) \sim (\mu,s)\}
$$
is nonempty since $t \in I$. It is enough to show that $I$ is closed and open.
By \cite[Lem. 3.62]{O} we have
$$
I = \{s \in [t,t+\ell];\ \text{$\phi_t^\gamma(\mu(s)) = \phi_s^\mu(\mu(s))$ and $d\phi_t^\gamma = d\phi_s^\mu$ at $\mu(s)$}\}
$$
We have $\phi_s^\mu(x) = \phi_r^\mu(x)$ for $s$ near $r$ and $x$ near $\mu(s)$.
Thus the maps $s \mapsto \phi_s^\mu(\mu(s))$ and $s \mapsto d\phi_s^\mu|_{\mu(s)}$ are smooth and $I$ is closed.

In order to show that $I$ is open, let us suppose that $(\gamma,t) \sim (\mu,s)$.
By the definition of a continuation $(\mu,s) \sim (\mu,r)$ for $r$ close to $s$.
The definition of a matched covering implies that $(\gamma,t) \sim (\mu,r)$ since $\mu(r)$ is in $U_t^\gamma \cap U_s^\mu \cap U_r^\mu$.
Thus $I$ is open.
\end{proof}

\begin{lemma}
\label{lem_lift}
Let $\mu : [0,\ell] \to M$ be a broken geodesic satisfying $\mu(0) = p$.
Then there are
$$0 = t_0 < t_1 < \dots < t_N = \ell$$
such that $\mu([t_{j-1},t_j]) \subset \mathcal U_{a_j}$, $j=1,\dots,N$,
where $a_j = (\mu, t_{j-1})$,
and we may define a continuous path $\hat \mu : [0,\ell] \to X$ by
$$
\hat \mu(\tau) = \lambda_{a_j}(\mu(\tau)), \quad t \in [t_{j-1},t_j].
$$
Moreover, $F \circ \hat \mu = \mu$, that is, $\hat \mu$ is a lift of $\mu$ through $F$.
\end{lemma}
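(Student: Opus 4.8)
The plan is to realize $\hat\mu$ by gluing together the local sections of $F$ furnished by the maps $\lambda_a$. Recall that $F\circ\lambda_a=\mathrm{id}_{\mathcal U_a}$, so each $\lambda_a:\mathcal U_a\to X$ is a continuous right inverse of $F$ over $\mathcal U_a$; the idea is to cover $\mu([0,\ell])$ by finitely many of the $\mathcal U_{a_j}$ and set $\hat\mu=\lambda_{a_j}\circ\mu$ on the corresponding parameter interval. Once the partition is in place, the identity $F\circ\hat\mu=\mu$ is immediate on each piece from $F\circ\lambda_{a_j}=\mathrm{id}$, so the two substantive points are the existence of a finite partition whose pieces are indexed by their left endpoints, i.e.\ $a_j=(\mu,t_{j-1})$, and the continuity of the resulting $\hat\mu$ across the breakpoints.

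For the partition I would argue by compactness. For each $s\in[0,\ell]$ the set $\mathcal U_{(\mu,s)}$ is an open neighborhood of $\mu(s)$, hence $\mu^{-1}(\mathcal U_{(\mu,s)})$ is a relatively open neighborhood of $s$ in $[0,\ell]$, and these sets cover the compact interval $[0,\ell]$. A clean way to organize the induction is to consider the set $T$ of all $t\in[0,\ell]$ for which $\mu|_{[0,t]}$ admits a partition of the required form. One checks that $T$ is a subinterval containing $0$ (a partition of $\mu|_{[0,t]}$ restricts to one of $\mu|_{[0,t']}$ for $t'\le t$), and that $T$ is open to the right: if $t\in T$ and $t<\ell$, then continuity of $\mu$ and openness of $\mathcal U_{(\mu,t)}$ give $\eta>0$ with $\mu([t,t+\eta])\subset\mathcal U_{(\mu,t)}$, so appending the interval $[t,t+\eta]$ with index $(\mu,t)$ enlarges the partition. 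The remaining, and genuinely delicate, point is to show that $\sup T$ is attained and equals $\ell$; here the obstacle is exactly the requirement that the covering neighborhoods be anchored at the left endpoints, because the neighborhood that visibly covers a short terminal segment $[\,t^\ast-\epsilon,t^\ast\,]$ is $\mathcal U_{(\mu,t^\ast)}$, indexed by the right endpoint. I would dispose of this using Lemma \ref{lem_equivalence_in_U_a}: whenever $\mu([s,s'])$ lies in a single $\mathcal U_{(\mu,s)}$, the pairs $(\mu,s)$ and $(\mu,s')$ are $\sim$-equivalent, so the two indices may be exchanged freely, which lets me re-anchor the terminal neighborhood to its left endpoint and close up the induction. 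This reindexing step is where I expect the main difficulty to lie.

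With the partition $0=t_0<\dots<t_N=\ell$, $a_j=(\mu,t_{j-1})$ and $\mu([t_{j-1},t_j])\subset\mathcal U_{a_j}$ in hand, continuity of $\hat\mu$ follows from the pasting lemma. On each $[t_{j-1},t_j]$ the map $\hat\mu=\lambda_{a_j}\circ\mu$ is continuous as a composition of continuous maps. At a breakpoint $t_j$ I must check that the two definitions agree, i.e.\ $\lambda_{a_j}(\mu(t_j))=\lambda_{a_{j+1}}(\mu(t_j))$, which by the definition of $X$ amounts to $a_j\sim a_{j+1}$, that is $(\mu,t_{j-1})\sim(\mu,t_j)$. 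This is precisely the conclusion of Lemma \ref{lem_equivalence_in_U_a} applied with the broken geodesic $\rho=\mu|_{[t_{j-1},t_j]}$ lying in $\mathcal U_{a_j}$ and starting at $\mu(t_{j-1})$, after the harmless identification of $(\mu,t_{j-1})$ and $(\mu,t_j)$ with the corresponding pairs for the truncated geodesics, which is justified by uniqueness of continuation via \cite[Lem. 3.62]{O}. Finally $F\circ\hat\mu=\mu$ holds piecewise because $F\circ\lambda_{a_j}=\mathrm{id}_{\mathcal U_{a_j}}$, which completes the verification that $\hat\mu$ is a lift of $\mu$ through $F$.
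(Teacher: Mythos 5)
Your overall skeleton is the same as the paper's: a compactness argument for the partition, Lemma \ref{lem_equivalence_in_U_a} to get $(\mu,t_{j-1})\sim(\mu,t_j)$ and hence continuity across breakpoints, and $F\circ\lambda_{a_j}=\mathrm{id}_{\mathcal U_{a_j}}$ for the lifting identity. The continuity and lifting parts of your argument are correct. The gap is in the step you yourself flag as delicate: closing the induction at $t^*=\sup T$. There you invoke Lemma \ref{lem_equivalence_in_U_a} in a direction in which it does not apply: its hypothesis is that the arc lies in the neighborhood anchored at the \emph{left} endpoint, $\mu([s,s'])\subset\mathcal U_{(\mu,s)}$, which is exactly the containment your induction is trying to manufacture, whereas at $t^*$ all you have is the right-anchored containment $\mu([t^*-\epsilon,t^*])\subset\mathcal U_{(\mu,t^*)}$. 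Moreover, even if you could establish the equivalence $(\mu,t^*-\epsilon)\sim(\mu,t^*)$, that would not put $t^*$ into $T$: membership in $T$ requires the set containment $\mu([t^*-\epsilon,t^*])\subset\mathcal U_{(\mu,t^*-\epsilon)}$, and $\sim$ is a statement about the two continuations agreeing on $\mathcal U_{(\mu,t^*-\epsilon)}\cap\mathcal U_{(\mu,t^*)}$; it does not transfer an arc from one neighborhood into the other. So ``the two indices may be exchanged freely'' does not close the induction.

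The gap is genuine rather than cosmetic, because the left-anchored partition is false at the level of pure point-set topology: if for each $s\in[0,1]$ one is handed an arbitrary open neighborhood $W_s\ni s$ in $[0,1]$, a finite partition with $[t_{j-1},t_j]\subset W_{t_{j-1}}$ need not exist. For instance, take $W_s=[0,(1+s)/2)$ for $s<1$ and $W_1=(0,1]$; then any left-anchored chain satisfies $1-t_j>(1-t_{j-1})/2$, hence $t_N<1$ for every finite $N$. Consequently no argument using only openness of the $\mathcal U_{(\mu,s)}$ and compactness of $[0,\ell]$ can work (the paper's own one-line justification is terse on the same point); one must use the freedom in how the sets $\mathcal U_a$ were chosen in the proof of Theorem \ref{thm_glueing}. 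The standard repair is quantitative: using property (ii) in the definition of a continuation and compactness, refine the chosen continuation along $\mu$ so that its domains are locally constant in $t$, and then choose each $\mathcal U_{(\mu,t)}$ to contain a $g^+$-ball about $\mu(t)$ of radius $\delta>0$ uniform in $t$ (possible by the lower bound on the convexity radius cited from \cite{Chavel2006}); then any partition with mesh below a fixed step size is automatically left-anchored and your set $T$ closes up. Alternatively, one can allow anchors at interior points of the subintervals, but then continuity at the breakpoints needs a \emph{reversed} form of Lemma \ref{lem_equivalence_in_U_a}, proved by running the broken geodesic backwards and using uniqueness of continuation together with the restricted transitivity of $\sim$; that is genuine additional work, not a free consequence of the lemma as stated.
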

\begin{proof}
Compactness of $\mu([0,\ell])$ implies the existence of the intervals $[t_{j-1},t_j]$,
and Lemma \ref{lem_equivalence_in_U_a} implies $(\mu,t_{j-1}) \sim (\mu,t_j)$.
Hence $\lambda_{a_j}(\mu(t_j)) = \lambda_{a_{j+1}}(\mu(t_j))$ and $\hat \mu$ is continuous.
\end{proof}

Let us now return to the proof of the theorem.
We will show next that $F$ is a covering map.
By \cite[Th. 7.28]{O} it is enough to show that all geodesics of $M$ can be lifted through $F$.
Let $\sigma : [0,\ell] \to M$ be a geodesic, let $[(y,a)] \in X$, $a = (\gamma,t)$,
and suppose that $\sigma(0) = y$.
There is a broken geodesic $\rho : [0,r] \to \mathcal U_a$ from $\gamma(t)$ to $y$.
We denote by $\mu$ the concatenation of $\gamma$, $\rho$ and $\sigma$,
and by $\hat \mu$ the lift of $\mu$ as in Lemma \ref{lem_lift}.
Now $\hat \sigma(\tau) = \hat \mu(\tau + t + r)$ is a lift of $\sigma$.
Let $j$ be the index satisfying $t+r \in [t_{j-1},t_j)$. Then
$$
\hat \sigma(0) = \hat \mu(t + r) = [(y, (\mu, t_{j-1}))] = [(y, (\mu, t+r))] = [(y, (\gamma, t))],
$$
%\HOX{We use Lemma \ref{lem_equivalence_in_U_a} twice since it is possible that $t_{j-1} < t$}
where we have employed Lemma \ref{lem_equivalence_in_U_a} twice.
We have shown that $F$ is a covering map.

Let us show next that $\tilde F$ is a covering map.
Let $\sigma : [0,\ell] \to \tilde M$ be a geodesic, let $[(y,a)] \in X$, $a = (\gamma,t)$,
and suppose that $\sigma(0) = \Phi_a(y)$.
There is a broken geodesic $\rho : [0,r] \to \mathcal U_a$ from $\gamma(t)$ to $y$.
We denote by $\alpha$ the concatenation of $\gamma$ and $\rho$ and write $s = t + r$.
Let $\beta : [0,L) \to M$ be the maximal geodesic satisfying $\beta(0) = y$
and $d \phi_s^\alpha \dot \beta(0) = \dot \sigma(0)$.
Moreover, let $\mu$ be the concatenation of $\alpha$ and $\beta$.
Then the geodesic $\tilde \sigma(\tau) = \phi_{s+\tau}^\mu(\mu(s+\tau))$
coincides with $\sigma$ on $[0,\ell] \cap [0,L)$ since both the geodesics have the same initial data.
If $\ell \ge L$ then the limit $\lim_{t \to L} \beta(t)$ exists by (L)
but this is a contradiction with the maximality of $\beta$.	
Thus $\ell < L$ and $\tilde \sigma = \sigma$ on $[0,\ell]$.

Let $\hat \mu$ be the lift of $\mu$ as in Lemma \ref{lem_lift}.
Then for $t \in [t_{j-1},t_j]$
$$
\tilde F \circ \hat \mu(t) = \Phi_{a_j} \circ F \circ \hat \mu(t) = \phi^\mu_{t_{j-1}} \circ \mu(t) = \phi_t^\mu \circ \mu(t).
$$
Indeed, the first identity follows from the definition of $\tilde F$,
the second from the fact that $\hat \mu$ is a lift of $\mu$, and the last from Lemma \ref{lem_equivalence_in_U_a}.
Hence $\hat \sigma(\tau) = \hat \mu(\tau + s)$ is a lift of $\sigma$ through $\tilde F$.
As above we see that
$$
\hat \sigma(0) = [(y, a)].
$$
We have shown that $\tilde F$ is a covering map.

Let us show that $X$ is connected.
It is enough to show that a point $[(y,a)] \in X$ can be connected to $[(p,b)] \in X$ where $a = (\gamma,t)$ and $b = (\beta, 0)$.
There is a broken geodesic $\rho : [0,\ell] \to \mathcal U_a$ from $\gamma(t)$ to $y$.
We denote by $\mu$ the concatenation of $\gamma$ and $\rho$, and by $\hat \mu$ the lift of $\mu$ as in Lemma \ref{lem_lift}.
Now
$$
\hat \mu(t+\ell) = [(y, (\mu, t_{N-1}))] = [(y, (\mu, t+\ell))] = [(y, (\gamma, t))].
$$
We have shown that $X$ is connected.
%\HOX{Lauri: I removed the reference ``Jerzak'' since I think that ``the composition of two covering maps is a covering map'' is an easy result. -Yang  }

As $X$ is connected, it has the universal covering $\tilde X$. Moreover, as the composition of two covering maps is a covering map in the case of manifolds, %see e.g. [Jerzak, Th. 2.11],
the covering $\tilde X$ is the universal covering of $M$ and $\tilde M$.
%\HOX{What is the reference ``Jerzak''? It is not included in the Reference section. -Yang}

%\HOX{I think that some reference would be good, could you please add
%this or something else. - Matti 
%
%I couldn't find any better ref. than the lecture notes. -Lauri
%}
\end{proof}

\begin{lemma}
\label{lem_equivalence_limits}
Suppose that $(M,g)$ and $(\tilde M, \tilde g)$ are geodesically complete modulo scalar curvature singularities.
Let $\phi : U \to \tilde U$
be an isometry of an open set $U \subset M$ onto an open set $\tilde U \subset \tilde M$.
Let $\ell \in (0,\infty)$ and let $\gamma : [0,\ell) \to M$ be a broken geodesic such that $\gamma(0) \in U$.
Suppose that $\phi_t$, $t \in [0,\ell)$, is a continuation of $\phi$ along $\gamma$,
and define $\mu(t) = \phi_t(\gamma(t))$, $t \in [0,\ell)$. Then the limit $\lim_{t \to \ell} \mu(t)$ exists
if and only if the limit $\lim_{t \to \ell} \gamma(t)$ exists.
\end{lemma}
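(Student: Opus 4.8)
The plan is to reduce everything to the behaviour of $\gamma$ and $\mu(t) = \phi_t(\gamma(t))$ on the final geodesic segment, and then to exploit the fact that a local isometry preserves both geodesics and scalar curvature invariants, so that a curvature singularity can develop along $\mu$ exactly when it develops along $\gamma$. First I would note that a broken geodesic on $[0,\ell)$ has only finitely many break points, so there is $a \in [0,\ell)$ such that $\gamma|_{[a,\ell)}$ is a single unbroken geodesic; since only the behaviour as $t \to \ell$ matters, I restrict attention to $[a,\ell)$.

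Next I would show that $\mu$ is a geodesic on $[a,\ell)$. Fix $t \in [a,\ell)$; by property (ii) of a continuation there is $\epsilon>0$ with $\phi_t = \phi_s$ on $U_t \cap U_s$ whenever $|t-s|<\epsilon$, and for $s$ close to $t$ we have $\gamma(s) \in U_t \cap U_s$, so $\mu(s) = \phi_s(\gamma(s)) = \phi_t(\gamma(s))$. Thus $\mu$ agrees locally with $\phi_t \circ \gamma$, and since $\phi_t$ is a smooth isometry and $\gamma$ a geodesic, $\phi_t \circ \gamma$ is a geodesic; hence $\mu$ satisfies the geodesic equation near every $t \in [a,\ell)$ and is a geodesic there. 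Applying the same local argument on each smooth piece of $\gamma$ shows that $\mu$ is in fact a broken geodesic on all of $[0,\ell)$, which I will need for the symmetric step.

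The key point is curvature invariance. For a scalar curvature invariant $\iota_g(x) = I(g(x),R(x),\dots,\nabla^k R(x))$, the same formula $I$ defines an invariant $\iota_{\tilde g}$ on $\tilde M$, and because the local isometry $\phi_t$ satisfies $\phi_t^\ast \tilde g = g$ it intertwines the Levi-Civita connections and curvature tensors, so $\iota_g(\gamma(t)) = \iota_{\tilde g}(\phi_t(\gamma(t))) = \iota_{\tilde g}(\mu(t))$ for every such $\iota$ and every $t \in [a,\ell)$. I would then prove the implication that $\lim_{t\to\ell}\gamma(t) = p_\infty$ exists implies $\lim_{t\to\ell}\mu(t)$ exists, as follows. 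Every scalar curvature invariant is a smooth, hence continuous, function on $M$, so $\iota_g(\gamma(t)) \to \iota_g(p_\infty)$ stays bounded as $t \to \ell$, and therefore $\iota_{\tilde g}(\mu(t))$ is bounded for every scalar curvature invariant $\iota_{\tilde g}$ on $\tilde M$. Let $\bar\mu : (\ell_-,\ell_+) \to \tilde M$ be the maximal geodesic extending $\mu|_{[a,\ell)}$, so that $\ell_+ \ge \ell$. If $\ell_+ = \ell$, then since $\ell < \infty$ the definition of geodesic completeness modulo scalar curvature singularities forces some scalar curvature invariant $\iota_{\tilde g}$ to be unbounded along $\bar\mu(t)$ as $t \to \ell_+ = \ell$, contradicting the boundedness just established. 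Hence $\ell_+ > \ell$, so $\bar\mu$ is smooth at $\ell$ and $\lim_{t \to \ell} \mu(t) = \bar\mu(\ell)$ exists.

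For the converse I would invoke symmetry: $\phi_t^{-1} : \tilde U_t \to U_t$ is an isometry and a continuation of $\phi^{-1}$ along the broken geodesic $\mu$, which starts at $\mu(0) = \phi(\gamma(0)) \in \tilde U$, and $\gamma(t) = \phi_t^{-1}(\mu(t))$; applying the implication just proved with the roles of $(M,g)$ and $(\tilde M,\tilde g)$ exchanged, which is legitimate since both are geodesically complete modulo scalar curvature singularities, shows that $\lim_{t\to\ell}\mu(t)$ exists implies $\lim_{t\to\ell}\gamma(t)$ exists. The two implications together give the equivalence. I expect the main technical care to be needed in two places: justifying rigorously that $\mu$ is a geodesic from the local constancy of the continuation, and verifying that the curvature singularity condition genuinely transfers between $\gamma$ and $\mu$, i.e.\ that the same function $I$ on both sides produces equal values under the local isometry.
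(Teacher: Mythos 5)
Your proof follows essentially the same route as the paper's: establish that $\mu$ is a (broken) geodesic via the local constancy of the continuation, use the fact that a local isometry preserves scalar curvature invariants so that $\iota_g(\gamma(t)) = \iota_{\tilde g}(\mu(t))$, and then convert boundedness of all invariants into extendibility of the maximal geodesic extension (via the completeness-modulo-scalar-curvature-singularities assumption), hence into existence of the limit. Your forward implication, including the reduction to the last unbroken segment and the maximal-extension argument, is correct and is logically the contrapositive form of the paper's argument.

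The one step that does not hold as stated is the converse. You claim that the family $\phi_t^{-1} : \tilde U_t \to U_t$ is a continuation of $\phi^{-1}$ along $\mu$. Condition (ii) in the definition of a continuation would require $\phi_t^{-1} = \phi_s^{-1}$ on all of $\tilde U_t \cap \tilde U_s$ for $|t-s|$ small, and this does not follow from $\phi_t = \phi_s$ on $U_t \cap U_s$: that hypothesis only pins the inverses down on $\phi_t(U_t \cap U_s)$, while $\tilde U_t \cap \tilde U_s$ may contain points of $\phi_t(U_t \setminus U_s)$ and $\phi_s(U_s \setminus U_t)$, where nothing forces the two inverses to agree (the sets $U_t$, $U_s$ need not be connected, and their images away from the overlap can intersect). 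The claim can be repaired by shrinking the neighborhoods so that pairwise intersections are connected (e.g.\ convex with respect to an auxiliary Riemannian metric, as in the paper's proof of its gluing theorem) and invoking the rigidity lemma \cite[Lem.\ 3.62]{O}; but it is cleaner to notice that you do not need the symmetric continuation at all. Your identity $\iota_g(\gamma(t)) = \iota_{\tilde g}(\mu(t))$ is already symmetric, so if $\lim_{t\to\ell}\mu(t)$ exists, then every scalar curvature invariant of $(M,g)$ is bounded along $\gamma$, and applying your maximal-extension argument to the last geodesic segment of $\gamma$ inside $(M,g)$ — which is itself geodesically complete modulo scalar curvature singularities — yields the limit of $\gamma$ directly. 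This is in effect how the paper argues: it proves the two-sided characterization ``the limit exists if and only if every scalar curvature invariant is bounded along the curve'' separately for $\gamma$ and $\mu$, and then transfers boundedness through the isometry, so that no continuation of the inverse maps ever enters.
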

\begin{proof}
Local isometries map geodesics to geodesics and $\mu(t) = \phi_s(\gamma(t))$ for $t$ near $s \in [0,r)$.
Thus $\mu$ is a broken geodesic.

Notice that the limit $\lim_{t \to \ell} \gamma(t)$ exists if and only if
$\kappa(\gamma(t))$ is bounded as $t \to \ell$
for all scalar curvature invariants $\kappa$ of $(M,g)$.
 {Indeed,
If $\gamma(t) \to x$ in $M$ as $t \to \ell$, then $\kappa(\gamma(t))$ is bounded as $t \to \ell$
for all scalar curvature invariants $\kappa$ of $(M,g)$ since $\kappa$ is smooth near $x$.
On the other hand if the limit $\lim_{t \to \ell} \gamma(t)$ does not exist, then $\gamma$ can not be extended, and
there is a scalar curvature invariant $\kappa$ of $(M,g)$ such that $\kappa(\gamma(t))$ is unbounded as $t \to \ell$.
}
The analogous statement holds for the limit $\lim_{t \to \ell} \mu(t)$.
The claim follows, since if $\tilde\kappa$ is a scalar curvature invariant of $(\tilde M, \tilde g)$
then the corresponding scalar curvature invariant $\kappa$ of $(M, g)$ satisfies $\kappa(x) = \tilde \kappa(\phi_t(x))$
for all $x \in U_t$.
\end{proof}

\begin{lemma}
\label{ac_gamma}
Suppose that $(M,g)$, $(\tilde M, \tilde g)$ and $\phi : U \to \tilde U$
are as in Lemma \ref{lem_equivalence_limits},
and let $\gamma : [0,\ell] \to M$ be a broken geodesic starting from $U$.
Suppose, furthermore, that $(M,g)$ and $(\tilde M, \tilde g)$ are real-analytic.
Then $\phi$ is extendable along $\gamma$.
\end{lemma}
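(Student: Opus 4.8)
The plan is to run an open–closed (continuity) argument on the parameter interval, with the crucial extension step powered by real-analytic continuation. Since $\gamma$ has finitely many break points, near any limiting parameter $\tau^*$ the incoming piece $\gamma|_{[t_1,\tau^*]}$ is a single geodesic segment once $t_1$ is close to $\tau^*$, and at the break points one simply hands the extension off to the next segment; so I may focus on extending across a parameter along which $\gamma$ is a geodesic. Let $I := \{\tau \in [0,\ell]:\ \phi \text{ is extendable along } \gamma|_{[0,\tau]}\}$. Restricting a continuation shows $I$ is a downward-closed interval containing $0$, so $I = [0,\tau^*)$ or $[0,\tau^*]$ with $\tau^* = \sup I$. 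By uniqueness of local isometries sharing a common $1$-jet \cite[Lem. 3.62]{O}, the continuations on the various $[0,\tau]$, $\tau<\tau^*$, agree on overlaps and glue to a single continuation $\phi_t$, $t \in [0,\tau^*)$. The goal is to prove $\tau^* \in I$ (closedness) and, if $\tau^* < \ell$, to push a little further (openness), which together force $\ell \in I$.

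The openness past an interior parameter is immediate: once $\phi_{\tau^*}$ is defined on an open $U_{\tau^*} \ni \gamma(\tau^*)$, continuity of $\gamma$ gives $\gamma(t) \in U_{\tau^*}$ for $t$ slightly larger than $\tau^*$, and setting $\phi_t := \phi_{\tau^*}$ there prolongs the continuation. Hence the whole content is the closedness step, i.e.\ extending across $\tau^*$. First, because $\gamma$ is defined on the closed interval, $\lim_{t\to\tau^*}\gamma(t) = \gamma(\tau^*) =: x^*$ exists, and Lemma \ref{lem_equivalence_limits} then guarantees that $\tilde y := \lim_{t\to\tau^*}\mu(t)$ exists in $\tilde M$, where $\mu(t) := \phi_t(\gamma(t))$. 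The broken geodesic $\mu$ is an honest geodesic segment on $(\tau^*-\delta,\tau^*)$ converging to the ordinary point $\tilde y$, so geodesic completeness modulo scalar curvature singularities forces its maximal extension $\hat\mu \supset \mu$ to remain defined past $\tau^*$: otherwise $\hat\mu$ would be inextendible at the finite parameter $\tau^*$ with interior endpoint $\tilde y$, whence some curvature invariant $\iota$ would blow up along $\hat\mu(t) \to \tilde y$, contradicting continuity of $\iota$ at $\tilde y$. In particular $\tilde{\exp}_{\mu(t_1)}$ is defined up to and beyond the parameter value reaching $\tilde y$.

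The extension itself is carried out by real-analytic continuation of a single $\phi_{t_1}$ with $t_1 < \tau^*$ close to $\tau^*$. I would set $\Phi := \tilde{\exp}_{\mu(t_1)} \circ (d\phi_{t_1}|_{\gamma(t_1)}) \circ \exp_{\gamma(t_1)}^{-1}$ on a normal neighborhood $\mathcal B$ of $\gamma(t_1)$. Since $\exp$, $\tilde{\exp}$ and the metrics are real-analytic, $\Phi$ is real-analytic, and since local isometries commute with the exponential map, $\Phi$ agrees with $\phi_{t_1}$ near $\gamma(t_1)$; therefore the real-analytic tensor $\Phi^*\tilde g - g$ vanishes on an open set and, by the identity theorem on the connected set $\mathcal B$, vanishes throughout $\mathcal B$, so $\Phi$ is an isometry. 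This is exactly where real-analyticity is indispensable: in the merely smooth category the exponential formula need not be isometric beyond the original domain. For $t_1$ close enough to $\tau^*$ the neighborhood $\mathcal B$ contains $x^*$ (normal-neighborhood radii are bounded below on the compact curve), the image stays in a normal neighborhood of $\mu(t_1)$ by the previous paragraph, and one computes $\Phi(x^*) = \tilde{\exp}_{\mu(t_1)}((\tau^*-t_1)\dot\mu(t_1)) = \hat\mu(\tau^*) = \tilde y$. Finally, by \cite[Lem. 3.62]{O} the isometry $\Phi$ coincides with $\phi_t$ near $\gamma(t)$ for each $t \in [t_1,\tau^*)$ (the germs agree near $\gamma(t_1)$ by property (ii) of a continuation, and this propagates along $\gamma$); hence defining $\phi_t := \Phi$ near $\gamma(t)$ for $t \in [t_1,\tau^*]$ yields a continuation on $[0,\tau^*]$, so $\tau^* \in I$.

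The main obstacle I anticipate is this closedness step: ensuring both that the isometry extends over a neighborhood of \emph{definite} size (so the continuation cannot stall before $\tau^*$) and that the target exponential map has $\mu$ in its domain all the way to $\tilde y$. The former is resolved by the real-analytic identity-theorem argument, the latter by completeness modulo curvature singularities. Notably, the Lorentzian (noncompact orthogonal group) character of the problem, which would obstruct a naive compactness extraction of $\lim_{t\to\tau^*} d\phi_t$, is sidestepped entirely by constructing the extension from a single $\phi_{t_1}$ rather than from a limit of differentials.
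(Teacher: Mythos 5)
Your proposal is correct and takes essentially the same approach as the paper's proof: a supremum (open--closed) argument along the curve, with the existence of the limit $\tilde y=\lim_{t\to\tau^*}\phi_t(\gamma(t))$ supplied by Lemma \ref{lem_equivalence_limits}, the extension constructed as $\widetilde{\exp}\circ d\phi_{t_1}\circ \exp^{-1}$ (which is exactly the paper's linear map $d\phi_s|_p$ read in normal coordinates), real-analyticity plus the identity theorem upgrading it to an isometry, and \cite[Lem.\ 3.62]{O} matching it with the existing continuation. The only minor deviations --- you extend the image geodesic past $\tau^*$ via completeness modulo curvature singularities where the paper instead confines everything to simply convex neighborhoods of $\gamma(S)$ and $\mu(S)$ (its set $W$, which also yields injectivity of the extended map, a detail your $\Phi$ acquires only after restricting to small neighborhoods of each $\gamma(t)$) --- do not change the substance of the argument.
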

\begin{proof}
\def\U{\mathcal U}
Let $S \ge 0$ be the supremum of $s \in [0,\ell]$ such that there is a continuation $\phi_t : U_t \to \tilde U_t$, $t \in [0,s]$, of $\phi$.
We define $\mu(t) := \phi_t(\gamma(t))$, $t \in [0,S)$.
Lemma \ref{lem_equivalence_limits} implies that the limit $\lim_{t \to S} \mu(t)$ exists. We denote the limit by $\mu(S)$.
Let $\U$ and $\tilde \U$ be simply convex neighborhoods of $\gamma(S)$ and $\mu(S)$ respectively.
Let $\epsilon > 0$ be such that $\gamma(t) \in \U$ and $\mu(t) \in \tilde \U$ for all $t \in [S-\epsilon, S]$.
By decreasing $\epsilon$ we may also assume that both $\gamma$ and $\mu$ are geodesics on $[S-\epsilon, S]$.

%\HOX{It might pay off to write all the transition functions explicitly here}
We write $s = S - \epsilon$, $p = \gamma(s)$ and $q = \mu(s)$.
We will work in the normal coordinates around $p$ and $q$.
In the normal coordinates, the isometry $\phi_s$ coincides with the linear map $d \phi_s|_p$ in $U_s \cap \U$,
see e.g. \cite[p. 91]{O}.
In the normal coordinates, the simply convex neighborhoods $\U$ and $\tilde \U$
are neighborhoods of the origins in $T_p M$ and $T_q \tilde M$ respectively.
We define $W$ to be the connected component of $\U \cap d \phi_s|_p^{-1} (\tilde \U)$ that contains the origin,
and denote by $\psi$ the linear map $d \phi_s|_p$ on $W$.
Let $X$ and $Y$ be real-analytic vector fields on $W$. Then
$$
(d \psi X, d \psi Y)_{\tilde g \circ \psi} = (X, Y)_g
$$
in $U_s \cap W$ since $\psi$ is an isometry there.
The both sides of the above identity are real-analytic functions on the connected set $W$,
whence the identity holds on $W$, see e.g. \cite[Lem. VI.4.3]{Helgason1962}.
Thus $\psi$ is an isometry of $W$ onto an open set in $\tilde M$.

We write $v = \dot \gamma(s)$.
In the normal coordinates, the geodesic $\gamma|_{[s, S]}$ has the form $\gamma(t-s) = (t-s)v$,
and the geodesic $\mu|_{[s, S]}$ has the form $\mu(t-s) = (t-s) d\phi_s|_p v$.
Thus $[s,S]v \subset U$ and $d\phi_s ([s,S]v) \subset \tilde U$. In particular, $\gamma(S) = (S-s)v \in W$.
If $S < \ell$, then there is $\delta  > 0$ such that $\gamma(t) \in W$ for $t \in [S,S+\delta]$ since $W$ is open.
Moreover, there is $r  \in (s,S)$ such that $\gamma(t) \in U_s$ for $t \in [s,r]$.
We define $V$ to be the connected component of $W \cap U_s$ that contains $\gamma(s)$.
Now
$$
\psi_t =
\begin{cases}
\phi_t : U_t \to \tilde U_t, & t \in [0,s)
\\\psi : V \to \psi(V), & t \in [s,r),
\\\psi : W \to \psi(W), & t \in [r,S+\delta),
\end{cases}
$$
is a continuation of $\phi$.
{\footnotesize
Indeed, if $t < s$ is close to $s$, then $\phi_t = \phi_s$ in $U_t \cap U_s$.
Hence $\phi_t = \psi$ in $U_t \cap V$.
Moreover, $\gamma(r) \in V$ and $\psi_t = \psi_{r}$ in $V \cap W = V$
if $t < r$ is close to $r$.
}
But this is a contradiction with maximality of $S$. Hence $S = \ell$.
\end{proof}

{Now we are ready to prove our second main theorem.}

\begin{proof}[Proof of Theorem \ref{thm_global}]
The metric tensors $g$ and $\tilde g$
are real-analytic in geodesic normal coordinates, see e.g. \cite[Th. 2.1]{DeTurck1981}.
Theorem \ref{thm_local} guarantees that there is a linear bijection $L: T_p M \to T_{\tilde p} \tilde M$
such that if $V_0, \dots, V_n$ is a basis of $T_p M$ and we define $\tilde V_j = L V_j$, then
the Taylor coefficients of the metric tensors $g$ and $\tilde g$ coincide in the normal coordinates
\begin{align*}
%\label{normal_coord}
\psi(x^0, \dots, x^n) = \exp_p(x^j V_j), \quad \tilde \psi(x^0, \dots, x^n) = \widetilde\exp_{\tilde p}(x^j \tilde V_j)
\end{align*}
defined on $\mathcal B = \{x \in \R^{1+n};\ (x^0)^2 + \dots + (x^n)^2 < r\}$ where $r > 0$ is small enough.
As $g$ and $\tilde g$ are real-analytic, they coincide in these coordinates.
Hence $\phi = \tilde \psi \circ \psi^{-1}$ is an isometry of $U = \psi(\mathcal B) \subset M$ onto $\tilde U = \tilde \psi(\mathcal B) \subset M$.
The claim follows from Theorem \ref{thm_glueing}
together with Lemmas \ref{lem_equivalence_limits} and \ref{ac_gamma}.
\end{proof}
\medskip

Next we prove Proposition \ref{prop: Analytic stationary}. We show that stationary solutions of the Einstein field equations coupled with scalar fields are real-analytic.

\begin{proof}[Proof of Proposition \ref{prop: Analytic stationary}]
%\HOX{Lauri: I rewrote the first two paragraphs of this proof, please check if this is what you meant. -Yang}
Given $p\in M$, first we show that there are local coordinates $y = (y^0, \dots, y^3)$ near $p$ such that $Z = \p_{y^0}$ and that $\hat g = (g^{jk})_{j,k=1}^3$ is positive definite.

We start with the coordinates
$$
(\tilde y^0, \dots, \tilde y^3) \mapsto \exp_p(\tilde y^0 Z(p) + \tilde y^1 V_1 + \tilde y^2 V_2 + \tilde y^3 V_3), 
$$
where $Z(p)/|Z(p)|_g, V_1, V_2, V_3$ form an orthonormal basis of $T_p M$. Here $Z(p)\neq 0$ since $Z$ is timelike. In these coordinates $Z$ can be written as 
$$Z=Z^0 \partial_{\tilde{y}^0}+Z^1 \partial_{\tilde{y}^1}+Z^2 \partial_{\tilde{y}^2}+Z^3 \partial_{\tilde{y}^3}$$
with $Z^0(p)=1$ and $Z^{1}(p)=Z^2 (p)=Z^3 (p)=0$; and in these coordinates the metric $\tilde{g}$ at $p$ is diagonal with diagonal elements $(-|Z(p)|_{g},1,1,1)$. In the following, we write $\tilde{y}'=(\tilde{y}^1,\tilde{y}^2,\tilde{y}^3)$ and use analogous notations also for other quantities. 
Denote the flow of $Z$ by $\varphi_{t}$, and define a smooth map
$$(t,y') \mapsto \varphi_{t}(y')=(\tilde{y}^0,\tilde{y}').$$ 
This map is indeed a diffeomorphism near $p$. To see this, simply notice that
$$
D\tilde y/D(t,y') = \begin{pmatrix}
Z^0 & D\tilde y^0/Dy' \\
Z'	& D\tilde y'/Dy'
\end{pmatrix},
$$
which is the identity at the origin. Thus we can choose $(t,y')$ as local coordinates near $p$. Renaming $t$ as $y^0$ gives $\partial_{y^0}=Z$. Moreover, in the coordinates $y=(y^0,y')$ the metric $g$ can be written as
$$g=\left(\frac{D\tilde{y}}{Dy}\right)^{T}\tilde{g}\left(\frac{D\tilde{y}}{Dy}\right)$$
%$$g=\left(D\tilde{y}/Dy\right)^{T}\tilde{g} \left(D\tilde{y}/Dy\right),$$
which, from our analysis above, is diagonal with diagonal elements $(-|Z(p)|_{g},1,1,1)$ at $p$. It follows that the matrix $\hat{g}=(g^{jk})^{3}_{j,k=1}$ is positive definite at $p$, and hence by continuity is also positive definite in a neighborhood of $p$.

%Let us choose local coordinates $y = (y^0, \dots, y^3)$ near a point $p \in M$
%such that $Z = \p_{y^0}$ and that $\hat g = (g^{pq})_{p,q=1}^3$ is positive definite.
%In fact, as $Z$ is timelike, $Z = \p_{y^0}$ implies that $\hat g$ is positive definite \cite{Muller},
%but we can also choose the coordinates $y$ explicitly as follows. 
%We choose first coordinates 
%
%$$
%(\tilde y^0, \dots, \tilde y^3) \mapsto \exp_p(\tilde y^0 Z + \tilde y^1 V_1 + \tilde y^2 V_2 + \tilde y^3 V_3), 
%$$
%where $Z/|Z|, V_1, V_2, V_3$ form an orthonormal basis of $T_p M$,
%%and the vectors $V_j$ are space-like, 
%and then follow the flow of $Z$ from the surface $\tilde y^0 = 0$.
%We write $y' = (y^1, y^2, y^3)$ and use the analogous notation also for other quantities. 
%Note that
%$$
%D\tilde y/Dy = \begin{pmatrix}
%Z^0 & D\tilde y^0/Dy' \\
%Z'	& D\tilde y'/Dy'
%\end{pmatrix},
%$$
%whence it is the identity at the origin.

Now we choose the coordinates $y = (y^0, \dots, y^3)$ as above. As $Z = \p_{y^0}$ is a Killing field, we have $\p_{y^0} g^{pq} = 0$ in the coordinates $y$, and the wave operator has the form
$$
\Box_g u = \sum_{p,q=1}^3 |g|^{-1/2} \p_{y^p} (|g|^{1/2} \hat g^{pq} \p_{y^q} u) 
$$
if $u$ is a function of $y'$ only.
Note also that
$$
\Box_g y^0 = \sum_{p=1}^3 |g|^{-1/2} \p_{y^p} (|g|^{1/2} \hat g^{p0})
$$  
is a function of $y'$ only.
Let us choose functions $x^j(y')$, $j=1,2,3$, solving the elliptic problem
$$
\Box_g x^j = 0, \quad \p_{y^k} x^j(0) = \delta_k^j, \quad k = 1,2,3.
$$
Moreover, let us choose a function $h(y')$ solving the problem
$$
\Box_g h = - \Box_g y^0,
\quad \p_{y^k} h(0) = 0, \quad k = 1,2,3.
$$
and define $x^0 = y^0 + h(y')$. 
Then $Dx/Dy$ is the identity at the origin,
and $x = (x^0, x')$ give local coordinates. 
Moreover, $Z = \p_{x^0}$ and the coordinates $x$ are harmonic, that is, $\Box_g x^j = 0$, $j=0,1,2,3$.

Note that Einstein equations are equivalent to %See Choquet-Bruhat, p. 44
\begin{align*}
\Ric(g)=\rho,\quad \rho_{jk}=T_{jk}-\frac 12 ((g)^{  nm}T_{nm})g_{jk}+2\Lambda g_{jk},\quad \hbox{on }M,
 \end{align*}
see \cite[p.\ 44]{ChBook} and we recall (see \cite{FM,HKM}) that
\begin{align} \label{q-formula2copy AAA}
\Ric_{\mu\nu}(g)&= \Ric_{\mu\nu}^{(h)}(g)
+\frac 12 (g_{\mu q}\frac{\p \Gamma^q}{\p x^{\nu}}+g_{\nu q}\frac{\p \Gamma^q}{\p x^{\mu}})
\end{align}
where  $\Gamma^q=g^{mn}\Gamma^q_{mn}$,
\begin{align} \label{q-formula2copyBa}
& \hspace{-1cm}\Ric_{\mu\nu}^{(h)}(g)=
-\frac 12 g^{pq}\frac{\p^2 g_{\mu\nu}}{\p x^p\p x^q}+ P_{\mu\nu},
\\ \nonumber
& \hspace{-2cm}P_{\mu\nu}=
g^{ab}g_{ps}\Gamma^p_{\mu b} \Gamma^s_{\nu a}+
\frac 12(\frac{\p g_{\mu\nu }}{\p x^a}\Gamma^a
+ \nonumber
g_{\nu l}  \Gamma^l _{ab}g^{a q}g^{bd}  \frac{\p g_{qd}}{\p x^\mu}+
g_{\mu l} \Gamma^l _{ab}g^{a q}g^{bd}  \frac{\p g_{qd}}{\p x^\nu}).\hspace{-2cm}
\end{align}
%where $ \Gamma^n=g^{jk}\Gamma^n_{jk}$.
Note that $P_{\mu\nu}$ is a polynomial of $g_{jk}$, $g^{jk}$ and the first derivatives of $g_{jk}$.
In the harmonic coordinates $x$, we have $\Gamma^q=0$, $q=0,1,2,3$,
and thus $\Ric_{\mu\nu}(g)$ coincides with $\Ric_{\mu\nu}^{(h)}(g)$.

As $Z = \p_{x^0}$, the stationarity of $g$ and $\phi$ implies that 
the equation (\ref{eq: Ein1}),  (\ref{eq: Ein2}), and  (\ref{eq: Ein3}),
have the form
\begin{align*}
  &\hspace{-1cm}  -\sum_{p,q=1}^3\frac 12 \hat g^{pq} \frac{\p^2 g_{jk}}{\p x^p \p x^q}+
    P_{jk}(g,\p g) =\rho_{jk}(g, T), \\
    &\hspace{-.5cm}T_{jk}=\bigg(\sum_{\ell=1}^L\p_j\phi_\ell \,\p_k\phi_\ell
-\frac 12 g_{jk}g^{pq}\p_p\phi_\ell\,\p_q\phi_\ell\bigg)-{\mathcal V}( \phi)
g_{jk},
\\
%\label{eq: Ein3 mod}
  &\hspace{-1cm}
    -\sum_{p,q=1}^3\frac 12 \hat g^{pq}(x)\frac{\p^2 \phi_\ell}{\p x^p \p x^q}
 -\mathcal V_\ell(\phi)=0,
\end{align*}

in the coordinates $x$.
This is an elliptic non-linear system of equations,
where $P_{jk}$, $\rho_{jk}$ and $\mathcal V$ are real-analytic.
By Morrey's theorem
\cite{Morrey}, a smooth solution of a real-analytic elliptic
system is real-analytic. Thus
$g$ and $\phi$ are real analytic in the coordinates $x$,
and also in the geodesic normal coordinates.

Let us now consider the differentiable structure of $M$ given by
the atlas of  convex normal coordinates associated to $g$. The transition
functions between such coordinates are real-analytic, and thus
$(M,g)$ can be considered as a real-analytic manifold.
\end{proof}

Finally, we give the proof of Corollary \ref{cor_global 2}.

\begin{proof}[Proof of Corollary \ref{cor_global 2}]
%\HOX{Proof is modified, please check once it more.}
{By Proposition \ref{prop: Analytic stationary},
the manifolds $(M,g)$ and $(\tilde M,\tilde g)$ are   real-analytic
and $\phi$ and $\tilde \phi$ are   real-analytic functions on these manifold.
By Theorem   \ref{thm_global}, the simply connected manifolds $(M,g)$
and $(\tilde M,\tilde g)$ are isometric and there is a real-analytic isometry $F:M\to\tilde M$.

%\HOX{I think this should be $\textrm{grad}\;d^{+}_{x_j}(y)\bigg|_{y=\hat x_0 }=\frac{\dot{\gamma_j}(0)}{|\dot{\gamma_j}(0)|_{g}}=\frac{1}{|\eta_j|_{g}}\eta_j$. This is because the geodesic in Lemma 5 (iv) is parametrized from $x$ to $y$ with $x\ll y$, but here the geodesic $\gamma_{j}$ is from $\hat{x_0}$ to $x_j$ with $x_{j}\ll \hat{x_0}$. Thus we should apply Lemma 5 (iv) to $\gamma_{j}(\ell_{j}-t)$ instead of $\gamma_{j}(t)$. -Yang} 
{\mltext Next we consider the Killing fields.
Let 
$U\subset T\Sigma$ be a neighborhood of $(\hat x_0,\hat \xi_0)$ and
$\epsilon>0$ for which the condition {\bf H} is valid. We may assume that 
$\hat \xi_0$ is a past-pointing timelike vector.

%Let $\theta_j\in T_{\hat x_0}\Sigma $, $j=1,2,\dots,n$ be linearly
%independent past-pointing timelike vectors such that $(\hat x_0,\theta_j)\in U$ and let $r_j\in (0,\epsilon)$ 
%be such that
 %$\eta_j=\theta_j+r_j\nu(\hat x_0)\in T_{\hat x_0}M$ are past-pointing timelike vectors.
%\HOX{I rewrote the second paragraph on this page due to the dimension issue. Please check if you are happy about it. -Yang}
Pick past-pointing vectors $\eta_{j}\in T_{\hat{x}_0}M$, $j=1,\dots,n+1$ so that they are linearly independent and so that $\gamma_{\hat x_0,\eta_j}(\ell_j)\in \overline \Sigma$ for some $\ell_j>0$. The latter condition can be achieved by the hypothesis \textbf{H} as long as $\eta_j$ is sufficiently close to its projection on $T_{\hat{x}_0}\Sigma$. 
Denote $\gamma_j:=\gamma_{\hat x_0,\eta_j}$ and let $x_j=\gamma_j(\ell_j)\in \overline \Sigma$ be the points where the geodesics intersect first time $\overline \Sigma$.
Then by Lemma \ref{measurement} (iv), we have 
        $$\textrm{grad}\;d^{+}_{x_j}(y)\bigg|_{y=\hat x_0 }=\frac{\dot{\gamma_j}(0)}{|\dot{\gamma_j}(0)|_{g}}=
        \frac{1}{|\eta_j|_{g}}\eta_j.
        $$
      %{measurement}
%\HOX{Our manifold is $(1+n)$-dimensional, so here we need $n+1$ distance functions $(d^{+}_{x_j}(y))^{n+1}_{j=1}$. We cannot choose all of them to be of the form $\eta_{j}=\theta_{j}+r_{j}\nu(\hat{x_0})$ since we have only $n$ independent $\theta_{j}$'s, but this difficulty can be overcome since we only need these vectors to be in $T_{\hat{x_0}}\Sigma\oplus (0,\epsilon(\theta))\nu(\hat{x_0})$ and linearly independent. This is possible. -Yang  }
As vectors $\eta_j$, $j=1,2,\dots,n+1$ are linearly independent,  the point $\hat x_0 $  has a neighborhood $V\subset M$ so that the map

\beq\label{X coordinates}
\mathcal D:V\to \R^{n+1},\quad \mathcal D(y)=(d^{+}_{x_j}(y))_{j=1}^{n+1}
\eeq
defines regular coordinates on $M$ near $\hat x_0 $. 
By  (\ref{inverse exp}), $\mathcal D$ can be written in terms of the inverse functions of the exponential
functions and thus
the coordinates $(V,\mathcal D)$ are real-analytic coordinates of $M$. 
Let  $\Sigma_0=V\cap \Sigma$.

The above  construction of coordinates (\ref{X coordinates}) can be done
also on $\tilde M$. Thus we see that 
on $\tilde V=F(V)$ we have coordinates  $\tilde {\mathcal D}:\tilde V\to \R^{n+1},$ given by 
$\tilde {\mathcal D}(y)=(\tilde d^{+}_{\tilde x_j}(y))_{j=1}^{n+1}$, $\tilde x_j=F(x_j)$.
As $F$ is an isometry, we have ${\mathcal D}=\tilde {\mathcal D}\circ F$ on $V$.
Also, by (\ref{sigma isometry}), we have 
 ${\mathcal D}|_{\Sigma_0}=\tilde {\mathcal D}\circ  \Psi|_{\Sigma_0}$. These yield 
 \beq\label{F on Sigma}
 F|_{\Sigma_0}=\Psi|_{\Sigma_0}.
 \eeq
This in particular implies
 that $F|_{\Sigma_0}:\Sigma_0\to \tilde \Sigma_0=\Psi(\Sigma\cap V)$ is a 
diffeomorphism.

Recall that $F$  is an isometry and a real-analytic map. Thus we see that the unit
 normal vectors satisfy $\tilde \nu=F_*(\nu)$. As 
 $\tilde \nu=\Psi_*(\nu)$ by our assumptions,
   (\ref {F on Sigma}) yield
$F_*=\Psi_*$ on $T_{\hat x_0}M$.

 As $Z$ is a Killing field on $M$,
the field $\tilde Z_0:=F_*Z$ is a Killing field on $\tilde M$.
Our next aim is to show that $\tilde Z_0=\tilde Z$.
}

%\HOX{Streamlined this a little. -Lauri}
%{\mltext Let $W\subset \Sigma$ be
% a local coordinate neighborhood of
% $\hat x_0$ on $\Sigma$ and 
%$\tilde W=\Psi(W)\subset \tilde \Sigma$
%be a local coordinate neighborhood of  $\Psi(\hat x_0)$.
%Let us consider the metric $g$
%on the semigeodesic coordinates 
% on $W$ associated to the unit normal vector field $\nu$,
% and the metric $\tilde g$
%on the semigeodesic coordinates 
% on $\tilde W$ associated to the unit normal vector field $\tilde \nu$, see (\ref{Psi}).
% We showed in proof of Theorem \ref{thm_local}  that 
% in these semigeodesic coordinates 
% all the derivatives of $g$ at $\hat x_0$ 
% coincide with the derivatives of $\tilde g$. %, see  (\ref{diffeo}).  
% }

%Let us denote $V_j = \p_{x^j} \in T\Sigma$, $j=1,\dots,n$,
%and $V_0 = \p_r$,
%where $(x,r) \in \Sigma \times \R$ is as in (\ref{Psi}).
%Note that $V_0$ is the unit normal vector field of $\Sigma$. 
%The differential of (\ref{diffeo}) maps $V_j$ to $\Psi_* V_j$,
%and $V_0$ to the unit normal vector field of $\tilde \Sigma$,
%which is by our assumption $\Psi_* V_0$.
%Let us denote $\tilde V_j = \Psi_* V_j$, $j=0,\dots,n$.
%In the proof of Theorem \ref{thm_global} below, 
%we choose 
%$F = \tilde \psi \circ \psi^{-1}$ near $\hat x_0$, where 
%\begin{align*}
%%\label{normal_coord}
%\psi(y^0, \dots, y^n) = \exp_{\hat x_0}(y^j V_j), \quad \tilde \psi(y^0, \dots, y^n) = \widetilde\exp_{\Psi(\hat x_0)}(y^j \tilde V_j).
%\end{align*}
%We see that $F_* = \Psi_*$ at $\hat x^0$, since 
%$F_* V_j = \tilde V_j = \Psi_* V_j$, $j=0,\dots,n$.

Now $\tilde Z_0 = F_* Z = \Psi_* Z = \tilde Z$ at $\hat x_0$.
As $F$ is an isometry, we have, see e.g. \cite[Prop. 3.59]{O},
$$
\tilde \nabla_{F_* X} \tilde Z_0 = F_* \nabla_X Z
$$
{\mltext for all vectors $X\in T_{\hat x_0}M$.} 
%We recall that $\tilde \nabla_{\tilde X} \tilde Y$ at $\Psi(\hat x_0)$ depends only on the value of the vector field $\tilde X$ at $\Psi(\hat x_0)$. 
As
$F_* = \Psi_*$ at $T_{\hat x_0}M$, we have at $\Psi(\hat x_0)$,
$$
\tilde \nabla_{\Psi_* X} \tilde Z_0 = \tilde \nabla_{F_* X} \tilde Z_0 = F_* \nabla_X Z
= \Psi_* \nabla_X Z = \tilde \nabla_{\Psi_* X} \tilde Z.
$$
%
%Moreover, (\ref{Lie_derivatives}) implies that at $\hat x_0$
%\begin{align}
%\label{commutators}
%[\tilde X,\tilde Z_0]=dF([X,Z])=
%d\Phi([X,Z])=
%d\Phi(\mathcal L_X Z)= \mathcal L_{d\Phi X}\tilde Z=[\tilde X,\tilde Z]
%\end{align}
%where $\tilde X$ is an arbitrary vector field on $\tilde M$ and $X=dF^{-1}(\tilde X)$ 
% is the corresponding vector field on $M$. 
%We see that formula (\ref{Lie derivatives}) for Lie derivatives
%holds also at point $ p$ for all $X=\tilde \xi_j$, and
%hence for all $X\in T_{p}\ M$.
%Thus by
% Koszul formula, see \cite[p. 109]{Beem}, we have for any vector fields $X$  and $Y$ on $M$
%\ba
%2 g(\nabla_XZ, Y) &=& \partial_X (g(Z,Y)) + \partial_Z (g(X,Y))- \partial_Y (g(X,Z)) \\
%& &+ g([X,Z],Y) - g([X,Y],Z) - g([Z,Y],X), 
%\ea
%Using this formula on $\tilde M$ for all $\tilde X,\tilde Y\in T_{\tilde q}\tilde M$ and formula (\ref{commutators})  at $\tilde q$,
Hence $\tilde \nabla \tilde Z_0=\nabla \tilde Z$ at $\Psi(\hat x_0)$.
By  \cite{O}, see Lemma 9.27 and the text below it, the pair $(\tilde Z, A):=(\tilde Z,\nabla \tilde Z)$ of the Killing field and its covariant derivative satisfy a first order differential
equation over arbitrary smooth curve $\mu(s)$ on $\tilde M$ (on the original Riemannian versions of this
result,
see \cite{Konstant,Nomizu}),
\ba
& &\nabla_{\dot \mu(s)} \tilde Z( \mu(s))=-A( \mu(s)) \dot\mu(s),\\
& &\nabla_{\dot \mu(s)} A( \mu(s))= R(\tilde Z( \mu(s)), \dot\mu(s)),
\ea
where $R$ is the curvature operator of $\tilde M$. Thus, as 
$\tilde Z_0=\tilde Z$ and   $\nabla \tilde Z_0=\nabla \tilde Z$ at $\Psi(\hat x_0)$, we see that
 $\tilde Z_0=\tilde Z$ on the whole manifold $\tilde M$.

{\mltext Next we consider the scalar fields.}
 As $\phi$  is stationary with respect to $Z$
 and $Z$  is transversal to 
${\Sigma_0}$, we see that 
$\phi|_{\Sigma_0}$ determines $\phi$ in a neighborhood of ${\Sigma_0}$. 
Indeed, we see that $\phi(\exp_x(sZ))=\phi(x)=\tilde \phi(\exp_{\tilde x}(s\tilde Z))$
for $x\in {\Sigma_0}$, $\tilde x=F(x)$ and $s\in (-\epsilon(x),\epsilon(x))$, $\epsilon(x)>0$.  Thus see that $\tilde \phi\circ F=\phi$
in a neighborhood of ${\Sigma_0}$. As $F$  and the functions $\phi$ and $\tilde \phi$ are real-analytic, we have
 $\tilde \phi\circ F=\phi$ on the whole $M$. This proves the claim.}
\end{proof}

%Perturbations http://physics.stackexchange.com/questions/8307/perturbation-of-a-schwarzschild-black-hole

%http://relativity.livingreviews.org/open?pubNo=lrr-1999-2&amp;page=articlesu1.html
%http://relativity.livingreviews.org/open?pubNo=lrr-1999-2&amp;page=articlesu1.html

\medskip

\noindent{\bf Acknowledgements.} The authors express their gratitude to
the Mittag-Leffler Institute, where parts of this work have been done. The authors would like to thank Prof. Gunther Uhlmann for his generous support related to this work, and for suggesting the method used in the proof of Theorem 1. 

%\HOX{Lauri: please check your acknowledgement and address. -Yang}
ML was partially supported by the Academy of Finland project 272312 and the Finnish Centre of Excellence in Inverse Problems Research 2012-2017.
YY was partially supported by the NSF grants DMS 1265958
and DMS 1025372. LO was partially supported by the EPSRC grant EP/L026473/1.

%\HOX{Style of references should be unified., please add your grant numbers and the addresses}
%
%\HOX{The style of references is consistent now. -Yang}

%\bibliographystyle{abbrv} % apa abbrv
%\bibliography{\latexpath/main}
%\bibliography{main}
\end{document}